\newtheorem{theorem}{Theorem}[section]
\newtheorem{lemma}[theorem]{Lemma}
\newtheorem{proposition}[theorem]{Proposition}
\newtheorem{corollary}[theorem]{Corollary}
\theoremstyle{definition}
\newtheorem{definition}[theorem]{Definition}
\newtheorem{remark}[theorem]{Remark}
\newtheorem{ipotesi}[theorem]{Assumption}
\numberwithin{equation}{section}
\newcommand{\R}{\mathbb{R}} 
\newcommand{\bA}{\mathbf{A}} 
\newcommand{\bC}{\mathbf{C}} 
\newcommand{\bB}{\mathbf{B}} 
\newcommand{\bS}{\mathbf{S}} 
\newcommand{\bOmega}{\mathbf{\Omega}}
\newcommand{\Om}{\Omega}
\newcommand{\p}{\mathbf{p}} 
\newcommand{\G}{\mathcal{G}} 
\newcommand{\bfeta}{\boldsymbol{\eta}} 
\newcommand{\Dir}{\mathrm{Dir}} 
\newcommand{\bG}{\mathbf{G}} 
\newcommand{\mass}{\mathbf{M}} 
\newcommand{\Rc}{\mathscr{R}} 
\newcommand\res{\mathop{\hbox{\vrule height 7pt width .3pt depth 0pt\vrule height .3pt width 5pt depth 0pt}}\nolimits}
\newcommand{\reg}{\mathrm{Reg}} 
\newcommand{\sing}{\mathrm{Sing}} 
\newcommand{\Sing}{\mathrm{Sing}} 
\newcommand{\bE}{\mathbf{E}} 
\newcommand{\bh}{\mathbf{h}} 
\newcommand{\bD}{\mathbf{D}}
\newcommand{\bSigma}{\mathbf{\Sigma}}
\newcommand{\bI}{\mathbf{I}}
\newcommand{\modp}{{\rm mod}(p)} 
\newcommand{\moddue}{{\rm mod}(2)} 
\newcommand{\Ha}{\mathcal{H}} 
\newcommand{\eps}{\varepsilon} 
\newcommand{\spt}{\mathrm{spt}} 
\newcommand{\dist}{\mathrm{dist}} 
\newcommand{\B}{\mathbf{B}} 
\newcommand{\Lip}{\mathrm{Lip}} 
\renewcommand{\epsilon}{\varepsilon}
\def\XXint#1#2#3{{\setbox0=\hbox{$#1{#2#3}{\int}$ }
		\vcenter{\hbox{$#2#3$ }}\kern-.6\wd0}}
\newcommand{\mres}{\mathbin{\vrule height 1.6ex depth 0pt width 
		0.13ex\vrule height 0.13ex depth 0pt width 1.3ex}}
\def\a#1{\left\llbracket{#1}\right\rrbracket}
\newcommand{\abs}[1]{\lvert#1\rvert} 
\newcommand{\etab}{\boldsymbol{\eta}}
\newcommand{\Iqspecl}{{\mathscr{A}_Q (\R)}}
\newcommand{\bH}{\mathbf{H}}
\newcommand{\be}{\mathbf{e}}
\newcommand\bmo{{\bm m}_0}
\newcommand\sC{{\mathscr{C}}}
\newcommand\sW{{\mathscr{W}}}
\newcommand\cM{{\mathcal{M}}}
\newcommand\cG{{\mathcal{G}}}
\title[Area minimizing hypersurfaces modulo $p$]{Fine structure of the singular set \\ of area minimizing hypersurfaces modulo $p$}
\author[C. De Lellis]{Camillo De Lellis}
\address{School of Mathematics, Institute for Advanced Study, 1 Einstein Dr., Princeton NJ 05840, USA}
\email{camillo.delellis@math.ias.edu}
\author[J. Hirsch]{Jonas Hirsch}
\address{Mathematisches Institut, Universit\"at Leipzig, Augustusplatz 10, D-04109 Leipzig, Germany}
\email{hirsch@math.uni-leipzig.de}
\author[A. Marchese]{Andrea Marchese}
\address{Dipartimento di Matematica, Universit\`a degli Studi di Trento, Via Sommarive 14, I-38123 Povo (TN), Italy}
\email{andrea.marchese@unitn.it}
\author[L. Spolaor]{Luca Spolaor}
\address{Department of Mathematics, UC San Diego, AP\&M, La Jolla, California, 92093, USA}
\email{lspolaor@ucsd.edu}
\author[S. Stuvard]{Salvatore Stuvard}
\address{Dipartimento di Matematica, Universit\`a degli Studi di Milano, Via Saldini 50, I-20133 Milano (MI), Italy}
\email{salvatore.stuvard@unimi.it}
\begin{document}

\maketitle

\begin{abstract}
    Consider an area minimizing current modulo $p$ of dimension $m$ in a smooth Riemannian manifold of dimension $m+1$. We prove that its interior singular set is, up to a relatively closed set of dimension at most $m-2$, a $C^{1,\alpha}$ submanifold of dimension $m-1$ at which, locally, $N\leq p$ regular sheets of the current join transversally, each sheet counted with a positive multiplicity $k_i$ so that $\sum_i k_i = p$. This completes the analysis of the structure of the singular set of area minimizing hypersurfaces modulo $p$, initiated by J. Taylor for $m=2$ and $p=3$ and extended by the authors to arbitrary $m$ and all \emph{odd} $p$. We tackle the remaining case of even $p$ by showing that the set of singular points admitting a flat blow-up is of codimension at least two in the current. First, we prove a structural result for the singularities of minimizers in the linearized problem, by combining an epiperimetric inequality with an analysis of homogeneous minimizers to conclude that the corresponding degrees of homogeneity are always integers; second, we refine Almgren's blow-up procedure to prove that \emph{all} flat singularities of the current persist as singularities of the $\Dir$-minimizing limit. An important ingredient of our analysis is the uniqueness of flat tangent cones at singular points, recently established by Minter and Wickramasekera in \cite{MW}.
\end{abstract}

\tableofcontents

\section{Introduction} \label{sec:intro}

This paper completes the analysis of the structure of the singular set of codimension one area minimizing currents modulo $p$, where $p \ge 2$ is an arbitrary integer, initiated by J. Taylor in \cite{Taylor}. Currents modulo $p$ were introduced by Ziemer for $p=2$ in \cite{Ziemer} and Federer for arbitrary $p$ in \cite{Federer69} in order to recast Plateau's problem in a class of generalized surfaces capable of exhibiting singularities which are not allowed in the class of area minimizing integral currents. For the terminology related to currents and currents modulo $p$ we refer to \cite{Federer69} and \cite{DLHMS}, and we give the following  

\begin{definition} \label{def:am_modp}
	Let $p\geq 2$, $\Omega \subset \R^{m+n}$ be open, and let $\Sigma \subset \R^{m+n}$ be a complete submanifold without boundary of dimension $m+\bar{n}$ and class $C^{3, \alpha}$ for some positive $\alpha$. We say that an $m$-dimensional integer rectifiable current $T \in \Rc_{m}(\Sigma)$ is \emph{area minimizing} $\modp$ in $\Sigma \cap \Omega$ if
	\begin{equation}\label{e:am_mod_p}
	\mass (T) \leq \mass (T + W) \qquad \mbox{for any $W \in \Rc_{m}(\Omega\cap \Sigma)$ which is a boundary $\modp$}.
	\end{equation}
\end{definition}

The set of interior regular points, denoted by $\reg (T)$, is the relatively open set of points $x\in \spt^p (T)$ in a neighborhood of which $T$ can be represented by a regular oriented submanifold of $\Sigma$ with constant multiplicity, cf. \cite[Definition 1.3]{DLHMS}. Its ``complement'', i.e.
\begin{equation} \label{e:singular set}
\sing (T) := (\Omega \cap  \spt^p (T)) \setminus (\reg (T) \cup \spt^p (\partial T))\, ,
\end{equation}
is the set of \emph{interior singular points}. It was recently established in \cite{DLHMS} that the Hausdorff dimension of $\Sing (T)$ cannot exceed $m-1$, the estimate being optimal, and that it is countably $(m-1)$-rectifiable with locally finite $\Ha^{m-1}$ measure whenever $p$ is odd. In this paper we then focus on the fine structure of $\Sing(T)$, and from now on we restrict our attention to the case when the codimension is $\bar n=1$. Prior to the present paper, the state of the art concerning the structural properties of the singular set for codimension 1 area minimizing currents mod $p$ was as follows:
\begin{itemize}
\item[(a)] When $p=2$, $\mathcal{H}^{m-2} (\sing (T))=0$ even in the case of minimizers of general uniformly elliptic integrands, see \cite{ASS}; for the area functional, using \cite{NV}, one can conclude additionally that $\sing (T)$ is countably $(m-7)$-rectifiable and has locally finite $\mathcal{H}^{m-7}$ measure.
\item[(b)] When $p=3$ and $m=2$, \cite{Taylor} gives a complete description of $\sing (T)$: the latter is shown to consist of $C^{1,\alpha}$ arcs where three regular sheets meet with equal angles; for higher $m$ the same structural result holds outside of a closed subset of dimension at most $m-3$, cf. \cite{Simon95}. 
\item[(c)] When $p=4$, \cite{White79} shows that minimizers of uniformly elliptic integrands are represented by {\em immersed manifolds} outside of a closed set of zero $\mathcal{H}^{m-2}$ measure.
\item[(d)] When $p$ is odd, the authors proved recently in \cite{DLHMSS} that $\sing (T)$ is a $C^{1,\alpha}$ submanifold of $\Sigma$ of dimension $m-1$ outside of a relatively closed set which is countably $(m-2)$-rectifiable and of locally finite $\Ha^{m-2}$ measure. It was then pointed out by Minter and Wickramasekera in \cite{MW} that the same conclusion can be inferred from the regularity theory for stable hypervarifolds developed by Wickramasekera in \cite{Wic}.
\end{itemize}

Better regularity properties can be inferred for every $p$ under suitable topological assumptions on the boundary $\partial^p[T]$, see e.g. \cite{Morgan_modp}. The regularity of the singular set established in point (d) induces a rigid local structure of the current itself, described in the following definition.

\begin{definition}\label{def:free-boundary}
Given an open set $U$, we say that $\sing (T)\cap U$ is a \emph{classical free boundary} if the following holds for some positive $\alpha$. 
\begin{itemize}
    \item[(i)] $\sing (T)\cap U$ is an orientable $C^{1,\alpha}$ $(m-1)$-dimensional submanifold of $U\cap \Sigma$; 
    \item[(ii)] $\reg (T) \cap U$ consists of $N\leq p$ connected $C^{1,\alpha}$ orientable submanifolds $\Gamma_i$ with $C^{1,\alpha}$ boundary $\partial \Gamma_i \cap U = \sing (T)\cap U$;
    \item[(iii)] There are $k_i\in \{1, \ldots , \lfloor \frac{p-1}{2}\rfloor\}$ such that, after suitably orienting $\sing (T) \cap U$ and $\Gamma_i$, 
\begin{align*}
&S := \sum_i k_i \a{\Gamma_i} \equiv T\res U\, \modp\\
&\partial S \res U= \sum_i k_i \a{\sing (T)\cap U} = p \a{\sing (T)\cap U}\, .
\end{align*}
\end{itemize}
A set $A \subset \sing(T)$ is locally a classical free boundary if for every $q \in A$ there is an open neighborhood $U \ni q$ such that $\sing (T) \cap U$ is a classical free boundary.
\end{definition}

The first main result of the present paper is the following.

\begin{theorem}\label{t:even}
Let $p= 2Q$ be even, and let $\Sigma, T$, and $\Omega$ be as in Definition \ref{def:am_modp}. If $\dim (\Sigma)= m+1$, then $\sing (T)$ is locally a classical free boundary outside of a relatively closed set $\mathcal{S}$ which has Hausdorff dimension at most $m-2$, and which is countable when $m=2$.
\end{theorem}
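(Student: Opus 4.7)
The plan is to split $\sing(T)$ into the set $\mathcal{F}$ of \emph{flat} singular points—those at which at least one tangent cone to $T$ is a (possibly multiple) hyperplane—and its complement. At every $q \in \sing(T) \setminus \mathcal{F}$ each tangent cone is a non-flat union of half-hyperplanes meeting along an $(m-1)$-dimensional spine. Invoking the regularity theory for codimension-one stable hypervarifolds of Wickramasekera \cite{Wic} together with the uniqueness of flat tangent cones at singular points of Minter--Wickramasekera \cite{MW}—exactly the combination that, as observed in point (d), recovers the odd-$p$ result—one infers that $\sing(T) \setminus \mathcal{F}$ is locally a classical free boundary in the sense of Definition \ref{def:free-boundary} outside a relatively closed set of dimension at most $m-2$. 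The theorem is thereby reduced to proving $\dim_{\cH} \mathcal{F} \leq m-2$, and the local countability of $\mathcal{F}$ when $m=2$.

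At any $q \in \mathcal{F}$, the tangent cone is unique by \cite{MW} and equals $Q \a{\pi_q}$ for some $m$-plane $\pi_q$, with density exactly $Q = p/2$. On a small neighborhood of $q$ I would build an Almgren center manifold $\mathcal{M}$ graphical over $\pi_q$, together with a normal approximation $N$ valued in the quotient of $\mathcal{A}_Q$ by the orientation-reversing involution $v \mapsto -v$ imposed by the mod $p$ identification, adapting the framework of \cite{DLHMS}. Rescaling $N$ around $q$ produces, in the limit, a non-trivial $\Dir$-minimizer $u$ in this quotient class on a disk in $\pi_q$.

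The heart of the argument is then two-fold. First, a structural theorem for $\Dir$-minimizers in the quotient class: combining an epiperimetric inequality at every homogeneous blow-up with a direct classification of homogeneous minimizers—showing that the admissible homogeneity degrees are forced to be integers by the algebraic constraints—yields uniqueness of tangent maps and, via the Almgren frequency together with a Federer-style dimension reduction, the bound $\dim_{\cH} \sing(u) \le m-2$, with local discreteness when $m=2$. Second, a refinement of the Almgren blow-up that guarantees that \emph{every} $q \in \mathcal{F}$ is detected as a singular point of $u$; this allows the singular-set estimate for $u$ to transfer back to $\mathcal{F}$ through the blow-up correspondence and deliver the required dimension bound.

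I expect the most delicate step to be this refined blow-up. In the standard Almgren procedure the center manifold can absorb a singularity, and \emph{a priori} there is no reason for the rescaled normal approximations to remain singular at the base point even when a flat singularity is present in the current. Overcoming this pathology exploits the algebraic rigidity of the mod-$p$ setting: a smooth graphical decomposition of $T$ near $q$ would be incompatible with $q \in \mathcal{F}$ because the same $\sum_i k_i \equiv 0 \pmod{p}$ cancellation that forces a flat tangent cone also obstructs a classical sheet decomposition at $q$. Making this quantitative via a sharp comparison between the mod-$p$ spherical excess of $T$ and the Dirichlet energy of $N$ is, to my mind, the technical crux of the proof.
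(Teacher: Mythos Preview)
Your overall architecture---split off the flat singular set $\mathcal{F}$, treat the non-flat part via Wickramasekera/Minter--Wickramasekera, and attack $\mathcal{F}$ through a center-manifold blow-up to a special $Q$-valued $\Dir$-minimizer $u$---matches the paper. But there is a genuine gap in the linear step. You assert that the structural theorem for $\Dir$-minimizers yields $\dim_{\mathcal{H}} \sing(u) \leq m-2$. This is false for special $Q$-valued minimizers with target $\R$: their singular set is generically $(m-1)$-dimensional (indeed, for a nontrivial $1$-homogeneous minimizer it is a hyperplane). So even if you succeed in showing that every $q\in\mathcal{F}$ persists as a singular point of the blow-up $u$, the inclusion $\mathcal{F}\subset\sing(u)$ only gives $\dim_{\mathcal{H}}\mathcal{F}\leq m-1$, which is no improvement.

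What the paper actually proves is finer: $\sing(u)$ decomposes as a $C^{1,\alpha}$ hypersurface $\sing_{m-1}(u)=\{I_{x,u}(0)=1\}$ and a remainder $\Sigma(u)=\{I_{x,u}(0)\geq 2\}$ of dimension at most $m-2$. The crucial nonlinear ingredient you are missing is therefore not merely ``persistence'' but a \emph{lower bound on the frequency}: one must show that a flat singularity of $T$ lands in $\Sigma(u)$, i.e.\ has frequency $\geq 2$ in the blow-up. The paper accomplishes this via an almost quadratic excess decay (Proposition~\ref{p:decay-improved}): at $q\in\Sing_f(T)$ the excess satisfies $\bE^{no}(T,\bB_r(q))\lesssim r^{2-2\delta}$ for every $\delta>0$, which forces the frequency $\bI(q,0)>1$ and hence, by the integer classification you mention, $\bI(q,0)\geq 2$. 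Note the feedback loop: the linear classification of homogeneity degrees (ruling out values in $(1,2)$) is used \emph{first} to upgrade the Minter--Wickramasekera decay to almost quadratic, and \emph{then} again to stratify $\sing(u)$. Your proposal treats the linear theory and the persistence step as independent, and this is where the argument would break.

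A minor point: the target space is not the quotient of $\mathcal{A}_Q$ by $v\mapsto -v$; the correct space $\mathscr{A}_Q$ is a quotient of the \emph{disjoint union} $\mathcal{A}_Q\sqcup\mathcal{A}_Q$, which records on which ``side'' the sheets lie relative to the center manifold.
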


In view of (d) the theorem above completes the description of the top-dimensional part of the singular set for any $p$. We also use it to obtain a structural result for $T$, which extends to the case of even $p$ a similar statement proved in \cite{DLHMS} for odd $p$. Note, however, that the proof given in \cite{DLHMS} for $p$ odd is valid in any codimension, while our current knowledge for even $p$'s is restricted to codimension $1$. Moreover, using the strong maximum principle in \cite{Wic2}, the regularity theory for area-minimizing integral hypercurrents, and the structure theorem of \cite{NV}, we can take advantage of the codimension $1$ assumption to derive a more detailed description of $T$ compared to the one given in \cite{DLHMS}.  

\begin{theorem}\label{t:even-structure}
Let $p= 2Q$ be even, and let $\Sigma, T$, and $\Omega$ be as in Definition \ref{def:am_modp}. Assume in addition that $\dim (\Sigma)= m+1$ and $\partial^p [T] = 0$ in $\Omega$. Then we can decompose 
\[
T=T_o + T_n \;\modp
\]
for two currents $T_o$ and $T_n$ with the following properties:
\begin{itemize}
\item[(i)] $T_o$ and $T_n$ are both area minimizing $\modp$, $\mass^p (T_o) + \mass^p (T_n) = \mass^p (T)$, and in fact $\spt^p (T_o)\cap \spt^p (T_n) \cap \Omega = \emptyset$; in particular, $\Sing(T) = \Sing(T_o) \cup \Sing(T_n)$.
\item[(ii)] There is an integer rectifiable current $T_1$ with $\spt (\partial T_1)\cap\Omega\subset \sing (T)$ such that $T_o = T_1\; \modp$ and, after endowing $\sing (T)\setminus \mathcal{S}$ with a suitable orientation, $\partial T_1 = p \a{\sing (T)}$ in $\Omega$.  
\item[(iii)] There is an integer rectifiable current $T_2$ which is representative $\moddue$, area minimizing $\moddue$, satisfies $\partial^2 [T_2] = 0$ in $\Omega$ and is such that $T_n = Q T_2\; \modp$.
 \item[(iv)] For every $q\in \Sigma \cap \Omega$ there is ball $\bB_\rho (q)\subset \Omega$ such that $T_2\res \bB_\rho (q)$ has a $\moddue$ representative which is an area-minimizing integral current in $\Sigma \cap \bB_\rho (q)$ with no boundary in $\Sigma \cap \bB_\rho (q)$. In particular, $\sing (T_n) = \sing (T_2)$ is $(m-7)$-rectifiable and has locally finite $\mathcal{H}^{m-7}$-measure.
\end{itemize}
\end{theorem}

The next Section \ref{s:synopsis} gives a complete account of the strategy of proof of Theorems \ref{t:even} and \ref{t:even-structure}, which will then be developed in details in the rest of the paper. A fundamental step, which is also of independent interest, is the proof of a structural theorem on the singular set of solutions to the linearized problem corresponding to the minimization of the mass modulo $p$, that is the minimization of a suitably defined Dirichlet energy on a class of special multiple valued functions; see Theorem \ref{thm:linear-what-is-needed} below.

It is worth mentioning that such linearized problem admits a natural interpretation as a Free-Boundary problem; in fact, the techniques employed to prove Theorem \ref{thm:linear-what-is-needed} can be adapted to provide an alternative, purely variational, proof of the profound results in \cite{CaLi}. For more on this connection, we refer the reader to Remark \ref{rmk FB}.

\medskip

\noindent\textbf{Acknowledgements.} The authors thank Frank Morgan for comments which lead to an improved version of Theorem \ref{t:even-structure}. C.D.L. acknowledges support from the National Science Foundation through the grant FRG-1854147. J.H. was partially supported by the German Science Foundation DFG in context of the Priority Program SPP 2026 “Geometry at Infinity”. L.S. acknowledges the support of the NSF grant DMS-2044954.

\section{Synopsis} \label{s:synopsis}

Following \cite[Section 3]{DLHMSS}, we denote by $\mathcal S^k$ the \emph{stratum} of the support of $T$ consisting of all points $q \in \spt^p(T)\setminus \spt^p(\partial T)$ such that no tangent cones to $T$ at $q$ possess $k+1$ independent symmetries, and we recall the following facts:
\begin{itemize}
    \item From Almgren's stratification theorem it follows that $\mathcal{S}^{m-2}$ has Hausdorff dimension at most $m-2$;
    \item From \cite[Corollary 3.1 and Theorem 3.2]{DLHMSS} it follows that $\mathcal{S}^{m-1}\setminus \mathcal{S}^{m-2}$ is locally a classical free boundary (and thus it is relatively open).
\end{itemize}
When $p=2$ or $p$ is odd, points in $\mathcal S^m \setminus \mathcal S^{m-1}$ are necessarily regular: see \cite{Allard72} and \cite{White86}, respectively. This is false when $p = 2Q \ge 4$, as shown in \cite[Example 1.6]{DLHMS}. We thus recall the notation ${\rm Sing}_f (T)$ for all ``flat singular points'', namely those singular points $q\in {\rm spt}\, (T)\setminus {\rm spt}^p\, (\partial T)$ which have at least one \emph{flat} tangent cone. As a consequence of White's regularity theorem in \cite{White86}, any flat tangent cone at a singular point must have multiplicity $Q$ modulo $p$, and thus all points $q \in \sing_f (T)$ have density $\Theta_T(q) = Q$. The two bullets above reduce Theorem \ref{t:even} to the following

\begin{theorem}\label{t:even-reduced}
Let $T$ be as in Theorem \ref{t:even}. Then ${\rm Sing}_f (T)$ has Hausdorff dimension at most $m-2$, and it is countable when $m=2$.
\end{theorem}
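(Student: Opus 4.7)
The plan is to follow Almgren's scheme: at each flat singular point $q \in \sing_f(T)$, perform a blow-up, identify the limit as a $\Dir$-minimizer in a suitable class of multi-valued functions respecting the mod-$p$ structure, show that the blow-up preserves the singularity at the origin, and invoke a codimension-two bound for the singular set of such $\Dir$-minimizers together with Almgren's stratification to conclude.

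The first step is to set up the blow-up. By definition $q \in \sing_f(T)$ admits a flat tangent cone of the form $Q\a{\pi}$ for some $m$-plane $\pi$, and by the recent uniqueness theorem of Minter and Wickramasekera this tangent plane does not depend on the chosen sequence of rescalings. This gives uniform graphicality at all small scales: in a suitable neighborhood of $q$ the current $T$ can be written, in the sense of currents mod $p$, as the graph of a ``special'' multi-valued map $f$ whose target is a quotient of $\mathcal{A}_{2Q}(\R)$ reflecting the cancellation, for $p=2Q$, of two parallel sheets of multiplicity $Q$ carrying opposite orientations. Writing $\bh(r)$ for the $L^2$-height of $T$ over $\pi$ on $B_r(q)$, the Almgren rescalings $f_r(x) := f(rx)/\bh(r)$ should converge, up to subsequence, to a $\Dir$-minimizer $u$ in this limiting class.

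The second step is to prove the structure theorem for the linearized problem: the singular set of such a special $\Dir$-minimizer $u$ has Hausdorff dimension at most $m-2$, and is locally finite when $m=2$. Two ingredients enter here: (i) an \emph{epiperimetric inequality} at homogeneous minimizers, forcing uniqueness of the homogeneous blow-up at every singular point of $u$; (ii) a classification of one-dimensional homogeneous special minimizers, which combined with (i) implies that the admissible homogeneities are always \emph{integers} $\alpha \ge 1$. A Federer-style dimension reduction then yields the desired codimension-two estimate on $\sing(u)$.

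The third step, and the main obstacle, is the \emph{persistence of singularities}: one needs to show that if $q \in \sing_f(T)$, then the origin is a singular point of every blow-up limit $u$. This is where the even case is fundamentally harder than the odd one: for $p=2Q$, two sheets of multiplicity $Q$ can meet with opposite orientations and cancel in the mod-$p$ sense, so a priori the blow-up could collapse the singularity into the graph of a single smooth function. The hard part will be to refine Almgren's frequency-function and center-manifold estimates so as to rule out this collapse, by controlling the $L^2$-excess of $T$ over a single $Q$-fold plane by a definite multiple of $\bh(r)^2$; any strictly faster decay, combined with the uniqueness of the tangent cone and White's $\varepsilon$-regularity theorem, would force $q$ to be regular, contradicting $q \in \sing_f(T)$. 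Once persistence is secured, Almgren's stratification together with the codimension-two bound on $\sing(u)$ yields $\dim_{\Ha}(\sing_f(T)) \le m-2$; the countability statement for $m=2$ follows because each flat singular point then corresponds to an isolated singularity of the associated planar $\Dir$-minimizer, and such points form a countable set.
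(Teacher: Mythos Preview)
Your outline has the right architecture (linear theory for special $Q$-valued minimizers, blow-up, Federer reduction), but there is a genuine gap in Step~2 which then propagates to Step~3.

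The claim ``the singular set of such a special $\Dir$-minimizer $u$ has Hausdorff dimension at most $m-2$'' is \emph{false}. The correct linear result (Theorem~\ref{thm:linear-what-is-needed} in the paper) is that $\sing(u)$ decomposes as $\sing_{m-1}(u)\cup\Sigma(u)$, where $\sing_{m-1}(u)=\{I_{x,u}(0)=1\}$ is a $C^{1,\alpha}$ submanifold of dimension $m-1$ and only $\Sigma(u)=\{I_{x,u}(0)\ge 2\}$ has dimension at most $m-2$. Your own ingredient~(ii) already admits homogeneity $\alpha=1$, so a Federer reduction from it cannot yield codimension two on all of $\sing(u)$; it yields codimension one, which is useless here.

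Consequently, ``persistence of singularities'' in the sense you describe --- showing that $0\in\sing(u)$ for the blow-up $u$ --- is not sufficient. What must be shown is the sharper statement that the blow-up at any $q\in\sing_f(T)$ has frequency $I_{0,u}(0)\ge 2$, i.e.\ that $0$ lands in $\Sigma(u)$ rather than in $\sing_{m-1}(u)$. The paper secures this through an \emph{almost quadratic excess decay} at flat singular points (Proposition~\ref{p:decay-improved}): one first proves the integer classification of frequencies in the linear problem, then uses it in a compactness argument to upgrade the Minter--Wickramasekera decay $\bE^{no}\lesssim r^\alpha$ to $\bE^{no}\lesssim r^{2-2\delta}$ for every $\delta>0$. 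This rate is incompatible with frequency $<2$ for the normal approximation $N$ (Lemma~\ref{l:remove_assumption}), and combined with the integer classification forces $\bI(q,0)\ge 2$. The Federer reduction is then run not on $\sing(u)$ but on the strata $\bS_f(k)=\{\bI(\cdot,0)=k\}$ for $k\ge 2$, and the contradiction comes from the nonexistence of a $\Dir$-minimizer with an $(m{-}1)$-plane of singular points all of frequency $k\ge 2$. Your Step~3 hints at a height--excess comparison but does not identify this quadratic-decay/frequency-$\ge 2$ mechanism, which is the actual heart of the argument.
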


In \cite[Theorem A]{MW}, Minter and Wickramasekera showed that, when $\Sigma=\R^{m+1}$, the flat tangent cone to $T$ at any point $q\in {\rm Sing}_f (T)$ is unique, and the rescaled currents $T_{q,r}$ converge to it with a power-law decay rate. The proof in \cite{MW} is based on the regularity theory of Wickramasekera for stable hypervarifolds \cite{Wic} and in particular the validity of the uniqueness of tangent cones and the power-law decay rate can be extended to the case in which $\Sigma$ is a more general Riemannian manifold along the lines explained in \cite{Wic}. This uniqueness and power-law decay rate is an essential starting point for our proof of Theorem \ref{t:even-reduced}. The specific form needed by our arguments is given in
Theorem \ref{t:uniqueness-tangent-plane}, because it differs slightly from the statements in \cite{MW}. We will assume it as a starting point of our analysis, and we shall present a self-contained proof, alternative to that of \cite{MW} and which uses directly the $\modp$ minimizing property, in the forthcoming work \cite{DLHMSS2}. 

We denote by $\bE^{no}$ the unoriented excess used in \cite{DLHMS} (which coincides with Allard's varifold excess, cf. \cite{Allard72}) and by $\bA$ the $L^\infty$ norm of the second fundamental form of the ambient manifold $\Sigma$. In fact, the unoriented excess $\bE^{no}$ used in \cite{DLHMS} is defined in cylinders and, for the reader's convenience, we specify its obvious extension to balls. First of all, given two $m$-dimensional planes $\pi$ and $\pi'$, we let $|\pi'-\pi|_{no}$ be the Hilbert-Schmidt norm of the difference between the orthogonal projections $\mathbf{p}_{\pi'}$ and $\mathbf{p}_\pi$ onto the corresponding planes. Hence, given a plane $\pi$ we define the spherical unoriented excess of $T$ with respect to $\pi$ in $\bB_r (q)$ as the quantity
\[
\bE^{no} (T, \bB_r (q), \pi) = \frac{1}{2\omega_m r^m} \int_{\bB_r (q)} |\pi (x) - \pi|_{no}^2 d\|T\| (x) \, ,
\]
where $\pi (x)$ is the plane oriented by the $m$-vector $\vec{T} (x)$. Finally, the spherical unoriented excess of $T$ in $\bB_r (q)$ is given by 
\[
\bE^{no}(T,\bB_r(q)):=\min_\pi \bE^{no} (T, \bB_r (q), \pi)\, .  
\]
We also recall that the height of the current $T$ in a set $E$ with respect to a given plane $\pi$ is defined as 
\[
\bh (T, E, \pi) := \sup_{q_1,q_2\in E\cap \spt^p (T)} |\mathbf{p}_{\pi^\perp} (q_1-q_2)|\, . 
\]
We are now ready to state the uniqueness of flat tangent cones and the corresponding decay rate which will be taken as starting point of our analysis.

\begin{theorem}\label{t:uniqueness-tangent-plane}
There are dimensional constants $\varepsilon_0 (p, m)>0$, $\alpha (p,m)>0$, and $C (p,m) > 0$ with the following property. Assume $T$ is as in Theorem \ref{t:even}, $q\in {\rm Sing}_f (T)$, $\bB_\rho (q)\cap {\rm spt}^p\, (\partial T) = \emptyset$ and 
\begin{equation}\label{e:smallness}
\bE^{no} (T, \bB_\rho (q)) + \rho^2 \bA^2 < \varepsilon_0\, .    
\end{equation}
Then, there is a unique tangent cone to $T$ at $q$: it has the form $Q \a{\pi (q)}$ for some $m$-dimensional plane $\pi(q)$, and moreover the following estimates hold
\begin{align}
\bE^{no} (T, \bB_r (q), \pi (q)) \leq C \frac{r^\alpha}{\rho^\alpha} (\bE^{no} (T, \bB_\rho (q)) + \rho^2 \bA^2)\, ,\label{e:decay}\\\label{e:starting_height_decay}
\bh (T, \bB_r (q), \pi (q)) \leq C r \frac{r^{\alpha/2}}{\rho^{\alpha/2}} (\bE^{no} (T, \bB_\rho (q)) + \rho^2 \bA^2)^{1/2}\, .
\end{align}
\end{theorem}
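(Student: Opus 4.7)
The plan is to follow the Minter--Wickramasekera route, which reduces the claim to Wickramasekera's regularity theory for stable codimension-one integral hypervarifolds. The first step is to pass from the current $T$ to its underlying integral varifold $V_T = |T|$ in $\Sigma \cap \bB_\rho(q)$: since $\dim\Sigma = m+1$, every smooth normal deformation of $T$ inside $\Sigma$ is an admissible competitor in the minimality inequality $\modp$, so $V_T$ is stationary and stable. Combined with the excess smallness \eqref{e:smallness}, this places $V_T$ in the hypotheses of Wickramasekera's $\epsilon$-regularity and excess decay theorem, and since $q \in \sing_f(T)$ the cited consequence of White's theorem forces $\Theta_T(q) = Q$ and pins the multiplicity of the approximating plane to $Q$.

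The core step is then to verify Wickramasekera's structural hypothesis for $V_T$ in the $\modp$ setting, namely to rule out certain degenerate tangent cone configurations (unions of half-hyperplanes meeting along a common $(m-1)$-axis) at nearby points of density strictly less than $Q$. This is performed inductively on the density: at density $1$ the point is regular by Allard; at intermediate densities the inductive hypothesis, combined with the codimension-one structure, reduces the classification to an analysis of flat pieces. Once the structural input is in place, the Minter--Wickramasekera decay lemma \cite[Theorem A]{MW} applies, yielding both uniqueness of the tangent cone $Q\a{\pi(q)}$ and the excess decay \eqref{e:decay}. Internally this couples an annular varifold-excess decay with a Lojasiewicz--Simon inequality for the area functional linearized at $Q\a{\pi}$, in the spirit of Simon's general framework for uniqueness of tangent cones.

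The height decay \eqref{e:starting_height_decay} then follows from \eqref{e:decay} by a standard dyadic telescoping: at each scale $r_k = 2^{-k}\rho$, an Allard-type height bound gives $\bh(T,\bB_{r_k}(q),\pi_k) \leq C r_k \, \bE^{no}(T,\bB_{2r_k}(q))^{1/2}$ for the optimal plane $\pi_k$ at scale $2r_k$; replacing $\pi_k$ with $\pi(q)$ costs $r_k |\pi_k - \pi(q)|_{no}$, itself controlled by summing \eqref{e:decay} in $k$, and the resulting geometric series produces the stated $r^{1+\alpha/2}$ rate. The extra $\rho^2 \bA^2$ contribution is the usual ambient-curvature inhomogeneity, propagated through every step by the standard error terms in Allard's monotonicity formula.

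The principal obstacle I foresee is the verification of the structural hypothesis in the even case. When $p = 2Q$, varifold tangent cones may contain pairs of half-planes with opposite orientations whose multiplicities sum to $\equiv 0 \modp$, i.e.\ configurations that cancel as mod $p$ currents while surviving in the varifold; these are precisely the ``new'' singularities absent for odd $p$. Excluding them at intermediate densities demands a delicate blow-up analysis. This is the point at which the alternative, self-contained proof announced in \cite{DLHMSS2}, which exploits $\modp$ minimality directly rather than routing through varifold stability, is expected to deliver a more transparent argument.
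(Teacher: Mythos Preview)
Your outline follows the Minter--Wickramasekera route, which is indeed how the result is obtained in \cite{MW}. However, you should be aware that the present paper does \emph{not} prove Theorem~\ref{t:uniqueness-tangent-plane} at all: it is explicitly taken as a black-box input from \cite{MW}, with a self-contained alternative proof (using the $\modp$ minimality directly) deferred to the forthcoming \cite{DLHMSS2}. So there is no proof in the paper to compare your proposal against; your sketch is, in effect, a summary of the cited external result rather than a contribution the paper itself makes.

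That said, a few remarks on the sketch itself. Your reduction to stationarity and stability of $V_T$ is correct. Your description of the ``structural hypothesis'' is imprecise: what Wickramasekera's theory requires is the exclusion of \emph{classical singularities}, i.e.\ points admitting a tangent cone supported on the union of three or more half-hyperplanes meeting along a common $(m-1)$-dimensional axis, and this must be checked at \emph{all} singular points (not just those of density below $Q$). In the $\modp$ setting this exclusion is exactly what \cite[Corollary 3.1 and Theorem 3.2]{DLHMSS} provides (such points are classical free-boundary points, hence not in the varifold singular set in the relevant sense). Your mention of a Lojasiewicz--Simon inequality is somewhat misleading for this particular step: the MW argument proceeds via excess-improvement and fine blow-up analysis in the Simon/Wickramasekera tradition rather than an abstract Lojasiewicz inequality. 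Finally, your last paragraph correctly anticipates the genuine difficulty in the even case and the motivation for \cite{DLHMSS2}, but note that this difficulty concerns the \emph{extension} of the theory to handle density-$Q$ branch points, not the verification of the structural hypothesis at lower densities.
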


The first step of our proof will be an improvement of the power-law decay in Theorem \ref{t:uniqueness-tangent-plane} to an ``almost quadratic decay''. The key for this improvement is a classification of the possible frequency values for Dir-minimizing special $Q$-valued functions, see Theorem \ref{thm:linear-what-is-needed} below. Special $Q$-valued functions were introduced in \cite{DLHMS_linear} as a toolbox to linearize the area functional precisely at flat singular points of density $Q=\frac{p}{2}$. In the statement of Proposition \ref{p:decay-improved}, we use the notation $\bC_r (q,\pi)$ for the cylinder $\{x \, \colon \,\abs{\mathbf{p}_\pi (x - q)} < r \}$, and $\bE^{no}(T,\bC_r(q,\pi))$ for the cylindrical unoriented excess of $T$
\[
\bE^{no}(T,\bC_r(q,\pi)) = \frac{1}{2 \omega_m r^m} \int_{\bC_r (q,\pi)} \abs{\pi(x) - \pi}^2_{no} \, d\|T\|(x)\,.
\]

\begin{proposition}[Almost quadratic excess decay]\label{p:decay-improved}
For every $\delta>0$ there are constants $\varepsilon_1 (\delta,p, m)>0$ and $C (\delta, p,m)>0$ with the following property. Assume $T$ is as in Theorem \ref{t:even}, $q\in {\rm Sing}_f (T)$, $\bB_{4\rho} (q)\cap {\rm spt}^p\, (\partial T) = \emptyset$ and 
\begin{equation}\label{e:smallness-2}
\bE^{no} (T, \bB_{4\rho} (q), \pi (q)) + (4\rho)^2 \bA^2 < \varepsilon_1\, .    
\end{equation}
Assume in addition that $\bC_{\rho} (q, \pi (q))\cap \spt^p (T)\subset \bB_{2\rho} (q)$. Then for every $0 < r \leq \frac{\rho}{8}$ we have
\begin{align}
&\bE^{no} (T, \bB_r (q), \pi (q)) \leq \bE^{no} (T, \bC_r (q, \pi (q))) \leq C \frac{r^{2-2\delta}}{\rho^{2-2\delta}} (\bE^{no} (T, \bB_\rho (q), \pi (q)) + \rho^2 \bA^2)\label{e:decay-quadratic}\, ,\\ \label{e:decay-height}
&\bh (T, \bB_r (q), \pi (q)) \leq C r \frac{r^{1-\delta}}{\rho^{1-\delta}} (\bE^{no} (T, \bB_\rho (q)) + \rho^2 \bA^2)^{1/2}\,.
\end{align}
\end{proposition}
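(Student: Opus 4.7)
The strategy follows the Almgren-style blow-up paradigm, reducing the excess decay to a classification of the leading-order behavior of the linearized problem --- here $\Dir$-minimizing special $Q$-valued functions. I would argue by contradiction: if the claim fails for some $\delta > 0$, one extracts sequences $(T_k, q_k, \rho_k, r_k)$ satisfying the hypotheses with smallness $1/k$, but for which \eqref{e:decay-quadratic} is violated with a constant $C_k \to \infty$. After translation and rescaling one may assume $q_k = 0$, $\pi(q_k) = \pi_0 := \R^m \times \{0\}$, $\rho_k = 1$, and set $E_k := \bE^{no}(T_k, \bB_1, \pi_0) + \bA_k^2 \to 0$. Combining the failure with the power-law decay \eqref{e:decay} of Theorem \ref{t:uniqueness-tangent-plane} forces $r_k \to 0$; choosing $r_k$ to be the largest dyadic scale at which \eqref{e:decay-quadratic} is violated, the estimate holds (with a moderate constant) at all intermediate scales $s \in [r_k, 1]$.

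Next, I would invoke the Lipschitz approximation scheme for area minimizing currents modulo $p$ near flat singular points, with targets in the space $\mathscr{A}_Q(\R)$ of special $Q$-valued points. Combining the Almgren-type approximation of \cite{DLHMS} with the linearization formalism of \cite{DLHMS_linear} produces maps $f_k : B_1 \subset \pi_0 \to \mathscr{A}_Q(\R)$ approximating $T_k$ in measure and in Dirichlet energy, with error super-linear in $E_k$. Setting $g_k := E_k^{-1/2} f_k$ and applying the compactness theorem for $\Dir$-minimizing special $Q$-valued functions, along a subsequence $g_k$ converges weakly in $W^{1,2}$ to a $\Dir$-minimizer $g_\infty$ on $B_1$. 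The flat-singular-point condition forces $g_\infty(0) = Q\a{0}$; the maximality of $r_k$ provides a quantitative lower bound on $\|g_k\|_{L^2(B_1)}$, ensuring $g_\infty \not\equiv 0$.

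The central structural input is Theorem \ref{thm:linear-what-is-needed}, which classifies all frequencies of $\Dir$-minimizing special $Q$-valued functions as integers; in particular $I_{g_\infty}(0)$ is a positive integer. The value $I_{g_\infty}(0) = 1$ is excluded: a homogeneous degree-$1$ tangent function of $g_\infty$ at $0$ would be of the form $\sum_i k_i \a{L_i}$ with not all the linear maps $L_i : \R^m \to \R$ vanishing, and the corresponding $\modp$ current $\sum_i k_i \a{\mathrm{graph}(L_i)}$ would then provide a tangent cone to $T_k$ at $q_k$ different from $Q\a{\pi_0}$, contradicting the uniqueness of the flat tangent cone guaranteed by Theorem \ref{t:uniqueness-tangent-plane}. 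Hence $I_{g_\infty}(0) \geq 2$.

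Standard decay estimates for $\Dir$-minimizers with integer frequency $\geq 2$ yield $\frac{1}{r^m} \int_{B_r} |Dg_\infty|^2 \lesssim r^2$, which via the super-linear Almgren error transfers to $\bE^{no}(T_k, \bC_{r_k}(0, \pi_0)) \lesssim r_k^{2-2\delta} E_k$, contradicting the failure assumption. The height estimate \eqref{e:decay-height} then follows from \eqref{e:decay-quadratic} combined with the height bound \eqref{e:starting_height_decay} via interpolation or a standard height-excess inequality. The main technical obstacle I anticipate is the precise bookkeeping of the Almgren error terms: bridging the discrete gap between the integer frequency $2$ of $g_\infty$ and the almost-quadratic rate $r^{2-2\delta}$ of the conclusion requires super-linear bounds on the bad set and on the mass defect of the Lipschitz approximation, together with quantitative control on the frequency comparison $I_{g_k}(0, \cdot) \to I_{g_\infty}(0, \cdot)$ across the entire dyadic range of scales.
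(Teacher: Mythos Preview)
Your overall architecture---contradiction, Lipschitz approximation, normalized blow-up to a $\Dir$-minimizing special $Q$-valued $g_\infty$, then the integer frequency classification of Theorem \ref{thm:linear-what-is-needed}---matches the paper's. The gap is in your exclusion of $I_{g_\infty}(0)=1$. The argument you give is incorrect: $g_\infty$ is the limit of $E_k^{-1/2} f_k$, so the graph of a $1$-homogeneous $g_\infty$ does \emph{not} represent a tangent cone of $T_k$ at $q_k$. The tangent cone at a flat singular point is, by definition, $Q\a{\pi_0}$; what $g_\infty$ encodes is the next-order correction after dividing out the excess. A $1$-homogeneous $g_\infty$ with $\etab\circ g_\infty=0$ would be a ``wedge'' as in Lemma \ref{l:homogeneous}, and there is no contradiction with the uniqueness of the flat tangent cone as you claim.

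The paper excludes frequency $1$ differently, and this is the point where Theorem \ref{t:uniqueness-tangent-plane} really does the work. The existing power-law decay \eqref{e:decay}, transferred through the Lipschitz approximation (item (iii) in the proof), yields for each fixed $r$ and $k$ large
\[
\Dir(g_k, B_r) \leq C\, r^{m+2\gamma}\, \Dir(g_k, B_{1/2}) + o_k(1)\, ,
\]
with $\gamma>0$ the exponent from Theorem \ref{t:uniqueness-tangent-plane}. Passing to the limit gives $\Dir(g_\infty, B_r)\leq C r^{m+2\gamma}$, which forces $I_{g_\infty}(0)>1$; only then does the integer classification upgrade this to $\geq 2$. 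Note also that the paper's setup avoids your multi-scale bookkeeping: rather than tracking a worst dyadic scale $r_k\to 0$, it reduces to the failure of a single halving step (from $\bC_1$ to $\bC_{1/2}$, or $\bC_4$ to $\bC_{1/2}$) and obtains the contradiction directly at the fixed limiting scale, which makes the passage to the limit in the Dirichlet energies cleaner.
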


For the second step we need the center manifold introduced in \cite[Sections 17.1 and 17.2]{DLHMS} and we refer to \cite{DLHMS} for all the relevant definitions pertaining to it. The center manifold is a $C^3$ submanifold which approximates the current rather efficiently in a ball where the spherical excess is sufficiently small and whose construction, which follows a Whitney-type (or Calder\'on-Zygmund-type) decomposition of the space, depends on certain numerical constants specified in \cite[Assumption 17.11]{DLHMS}.

The next proposition states that an appropriate choice of these parameters guarantees that \emph{all} the flat singular points of $T$ lie in the center manifold. In fact we need a more precise statement which estimates the size of the cubes in the Calder\'on-Zygmund-type decomposition leading to the construction of the center manifold which are close to a flat singular point. 

\begin{proposition}[Enhanced Center Manifold]\label{p:contact} There is a constant $\eta (p, m)>0$ with the following property. 
Let $T$ as in Theorem \ref{t:even-reduced} satisfy that $0 \in \Sing_f (T)$ as well as, in addition, all the assumptions of \cite[Assumption 17.5]{DLHMS}. If the parameters in \cite[Assumption 17.11]{DLHMS} are chosen appropriately (in particular as it is detailed in Assumption \ref{a:parameters_fine_cm}), then 
\begin{equation}\label{e:fine_cm1}
{\rm Sing}_f (T)\cap \bB_\eta \subset \Phi (\Gamma)\,,
\end{equation}
i.e. the contact set $\Phi(\Gamma)$ of the center manifold $\mathcal{M}$ contains \emph{all} the flat singular points of $T$ in $\bB_\eta$. Moreover for every $q\in {\rm Sing}_f (T)\cap \bB_\eta$, writing $q=\left( x_q, y_q \right) \in \pi_0 \times \pi_0^\perp$ (where $\pi_0$ is the $m$-plane appearing in \cite[Assumption 17.5]{DLHMS}) we have
\begin{equation}\label{e:fine_cm2}
L\in \sW \qquad \Rightarrow \qquad \ell(L)<\frac{1}{64\sqrt{m}}\, \dist(x_q,L)\,,
\end{equation}
that is all cubes where the refinement stops close to a flat singularity have small side-length.
\end{proposition}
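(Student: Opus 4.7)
\medskip

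\noindent\textbf{Proof plan.} The strategy is to choose the parameters in \cite[Assumption 17.11]{DLHMS} so that the stopping thresholds used in the Whitney–Calder\'on–Zygmund decomposition which produces the center manifold $\mathcal{M}$ are weaker than the almost quadratic decay of excess and height that Proposition \ref{p:decay-improved} provides at every flat singular point. Once this is arranged, near any $q\in \Sing_f(T)\cap\bB_\eta$ no small dyadic cube in $\pi_0$ can satisfy any of the stopping criteria $\sW^e,\sW^h,\sW^n$, forcing $x_q$ to belong to the contact set $\Gamma$ and, simultaneously, forcing every stopping cube to lie at distance $\gtrsim \ell(L)$ from $x_q$. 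A delicate point, but a purely bookkeeping one, is to verify that the more stringent choice of parameters is still compatible with all the quantitative constraints in \cite[Assumption 17.11]{DLHMS}; this is encoded in the formulation of Assumption \ref{a:parameters_fine_cm}.

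\medskip

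\noindent\textbf{Step 1: parameter selection.} Recall that the stopping conditions in \cite{DLHMS} stop a cube $L$ either because the cylindrical excess on $\bB_{C_0\sqrt{m}\,\ell(L)}(x_L)$ exceeds $C_e\,\bmo\,\ell(L)^{2-2\delta_2}$, or the height exceeds $C_h\,\bmo^{1/2}\,\ell(L)^{1+\beta_2}$, or some neighbor stopped earlier. Fix $\delta>0$ to be used in Proposition \ref{p:decay-improved} with the key property $2-2\delta > 2-2\delta_2$ and $1-\delta > \beta_2$, i.e., $\delta < \min\{\delta_2,1-\beta_2\}$. Assumption \ref{a:parameters_fine_cm} will in particular require $\delta_2,\beta_2$ to be small, and the stopping constants $C_e,C_h$ to be large relative to the constant $C$ appearing in \eqref{e:decay-quadratic}–\eqref{e:decay-height}.

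\medskip

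\noindent\textbf{Step 2: almost quadratic decay in cylindrical coordinates with respect to $\pi_0$.} Let $q\in \Sing_f(T)\cap\bB_\eta$, write $q=(x_q,y_q)\in\pi_0\times\pi_0^\perp$, and let $\pi(q)$ be the unique tangent plane provided by Theorem \ref{t:uniqueness-tangent-plane}. By choosing $\eta$ small, the smallness condition \eqref{e:smallness-2} is satisfied on $\bB_{4\rho_0}(q)$ for some fixed $\rho_0$ comparable to $1$, and Proposition \ref{p:decay-improved} yields
\begin{equation*}
\bE^{no}(T,\bB_r(q),\pi(q))\le C\,r^{2-2\delta},\qquad \bh(T,\bB_r(q),\pi(q))\le C\,r^{2-\delta},
\end{equation*}
for all $r\le\rho_0/8$, where $C$ absorbs a small multiple of $\bmo$. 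A standard tilting estimate (see the proof of \cite[Theorem 17.14]{DLHMS}) combined with the base smallness in \cite[Assumption 17.5]{DLHMS} gives $|\pi(q)-\pi_0|_{no}\le C\bmo^{1/2}$, and comparing balls with cylinders in a scale-invariant way transports the bounds above to the cylindrical excess $\bE^{no}(T,\bC_r(x_q,\pi_0))$ and the height $\bh(T,\bC_r(x_q,\pi_0),\pi_0)$, with the same exponents and a harmless multiplicative constant.

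\medskip

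\noindent\textbf{Step 3: stopping cubes avoid a neighborhood of $x_q$.} Suppose $L\in\sW$ satisfies $\dist(x_q,L)\le \tfrac{1}{64\sqrt{m}}\,\ell(L)$; then $|x_L-x_q|\lesssim\ell(L)$ and in particular $\bB_{C_0\sqrt m\,\ell(L)}(x_L)\subset\bB_{c_1\ell(L)}(q)$ for a purely dimensional $c_1$. Plugging $r=c_1\ell(L)$ into the estimates of Step 2 yields
\begin{equation*}
\bE^{no}(T,\bB_{C_0\sqrt m\,\ell(L)}(x_L),\pi_0)\le C'\,\ell(L)^{2-2\delta},\qquad \bh(T,\ldots,\pi_0)\le C'\,\ell(L)^{2-\delta}.
\end{equation*}
Since $2-2\delta>2-2\delta_2$ and $2-\delta>1+\beta_2$, by choosing $\eta$ small both right-hand sides are strictly smaller than $C_e\bmo\,\ell(L)^{2-2\delta_2}$ and $C_h\bmo^{1/2}\,\ell(L)^{1+\beta_2}$ respectively. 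Hence neither $L\in\sW^e$ nor $L\in\sW^h$ can hold. The same reasoning applied to an enlarged ball rules out $L\in\sW^n$, since the candidate neighbor that could have forced stopping would itself violate the analogous decay. This is exactly \eqref{e:fine_cm2}, provided $\eta$ is taken sufficiently small.

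\medskip

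\noindent\textbf{Step 4: contact and identification of $\Phi(x_q)$.} The conclusion of Step 3 implies that for every $j\in\N$ large enough the dyadic cube of side $2^{-j}$ containing $x_q$ is never stopped, so by definition $x_q\in\Gamma$. By the characterization of the center manifold at contact points (cf.\ \cite[Proposition 17.13 ff.]{DLHMS}), the value $\Phi(x_q)$ coincides with the limit of the averaged approximations of $T$ along any sequence of refining cubes collapsing to $x_q$; by the height decay of Step 2 this limit is forced to be $q$. Therefore $q=\Phi(x_q)\in\Phi(\Gamma)$, proving \eqref{e:fine_cm1}.

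\medskip

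\noindent\textbf{Main obstacle.} The genuine difficulty is not the conceptual structure of the argument, which is essentially dictated by the decay rate of Proposition \ref{p:decay-improved}, but the coherent choice of parameters: one must tighten the constants $(\delta_1,\delta_2,\beta_2,C_e,C_h,M_0,N_0,\eps_{CM})$ in \cite[Assumption 17.11]{DLHMS} while preserving all the inequalities that make the original construction (Lipschitz approximation, interpolating functions, error estimates) work. This is done once and for all in Assumption \ref{a:parameters_fine_cm}, and everything downstream in the paper is compatible with such a choice.
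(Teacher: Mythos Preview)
Your strategy is correct and essentially identical to the paper's: use the almost quadratic decay of Proposition \ref{p:decay-improved} at $q$ to show that no cube close to $x_q$ can trigger the excess or height stopping condition, whence $x_q\in\Gamma$ and \eqref{e:fine_cm2} follows. There are, however, three points of imprecision worth correcting.

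First, in Step~3 you negate \eqref{e:fine_cm2} incorrectly: the negation of $\ell(L)<\tfrac{1}{64\sqrt m}\dist(x_q,L)$ is $\dist(x_q,L)\le 64\sqrt m\,\ell(L)$, not $\dist(x_q,L)\le\tfrac{1}{64\sqrt m}\,\ell(L)$. With your inequality as written you would only establish the much weaker bound $\ell(L)<64\sqrt m\,\dist(x_q,L)$. The argument goes through verbatim with the correct constant (and this is what the paper writes as $|x_0-x_L|\le 128\sqrt m\,\ell(L)$).

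Second, the mechanism by which the right-hand sides in Step~3 are forced below the stopping thresholds is not ``$\eta$ small''. The decay of Step~2 gives $\bE^{no}(T,\bB_L)\le C_g\,\bmo\,\ell(L)^{2-2\delta}$ and $\bh(T,\bB_L)\le C_g\,\bmo^{1/2}\ell(L)^{2-\delta}$ for a constant $C_g$ depending on the starting scale (hence on $N_0$). Since the stopping thresholds are $C_e\,\bmo\,\ell(L)^{2-2\delta_2}$ and $C_h\,\bmo^{1/2}\ell(L)^{1+\beta_2}$, and since $\delta\le\delta_2$ and $2-\delta\ge 1+\beta_2$, the contradiction comes simply from choosing $C_e,C_h>C_g$, which is precisely what Assumption \ref{a:parameters_fine_cm} encodes. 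Your alternative route via strictly better exponents and ``$\ell(L)$ small'' can be made to work, but only after checking that your constant $C'$ does not itself depend on $N_0$ (i.e.\ that $\rho_0\sim 1$ is really admissible), since in the paper $\eta=c_g 2^{-N_0}$ and the constant $C_g$ grows with $N_0$.

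Third, for the case $L\in\sW^j_n$ the correct reduction is to an \emph{ancestor} $\tilde L\in\sW^k_e\cup\sW^k_h$ with $N_0\le k<j$, not to a ``neighbor''. One then checks that $\tilde L$ is still close enough to $x_q$ (the side lengths of the chain of ancestors sum geometrically to at most $4\sqrt m\,\ell(\tilde L)$), so the excess/height decay at $q$ again contradicts the stopping of $\tilde L$. Your one-line treatment glosses over this recursion.
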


Note next that, having fixed any point $q\in {\rm Sing}_f (T)$, the current $T_{q,r}:= (\iota_{q,r})_\sharp T$ (where $\iota_{q,r} (\bar q) := \frac{\bar q-q}{r}$) falls under the assumptions of Proposition \ref{p:contact}, provided $r$ is sufficiently small. Therefore a standard covering argument and Proposition \ref{p:contact} reduce the proof of Theorem \ref{t:even-reduced} to the following statement. 

\begin{theorem}\label{t:even-reduced-2}
Let $T$ be as in Proposition \ref{p:contact} and $\mathcal{M}$ be the corresponding center manifold. Then, ${\rm Sing}_f (T) \cap\overline{\mathbf{B}}_1\cap \mathcal{M}$ has Hausdorff dimension at most $m-2$, and it is countable when $m=2$.
\end{theorem}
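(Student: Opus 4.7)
The plan is to run a refined Almgren blow-up on the enhanced center manifold $\mathcal{M}$ and then transfer the question to estimating the singular set of a limiting $\Dir$-minimizing special $Q$-valued function by means of an Almgren-type quantitative stratification. Three ingredients are central: the normal approximation $N\colon \mathcal{M}\to \Iqspec$ of $T$ on $\mathcal{M}$; the almost quadratic excess decay of Proposition~\ref{p:decay-improved}, which quantifies how sharply $N$ records flat singularities; and the integer-frequency classification of Theorem~\ref{thm:linear-what-is-needed} for $\Dir$-minimizing special $Q$-valued maps.

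First, for every $q \in \Sing_f(T)\cap \overline{\bB}_1\cap \mathcal{M}$ I would show that $N$ cannot decay faster than a polynomial rate at $q$. Quantitatively, combining Proposition~\ref{p:decay-improved} with the standard $L^2$-comparison between $N$ and the current on $\mathcal{M}$ produces a height scale $\bh_{q,r}$ such that the rescalings $\bh_{q,r}^{-1} N_{q,r}$ remain nontrivial as $r \downarrow 0$. Passing to a subsequential limit and invoking the monotonicity of the frequency function developed for special $Q$-valued functions in \cite{DLHMS_linear} produces a nonzero homogeneous $\Dir$-minimizer $N_\infty$ on an $m$-plane, with a well-defined frequency $\alpha(q)$.

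Next, Theorem~\ref{thm:linear-what-is-needed} forces $\alpha(q) \in \N$, and the cases $\alpha(q)\in\{0,1\}$ are excluded: the former is incompatible with the nontriviality of $N_\infty$ at a flat point of density $Q$, while the latter would realize $N_\infty$ as a superposition of linear functions, in which case the $\epsilon$-regularity at flat points would force $q\in \reg(T)$. Hence $\alpha(q)\geq 2$ everywhere on $\Sing_f(T)\cap \mathcal{M}$. A quantitative stratification of $\Sing_f(T)\cap \overline{\bB}_1\cap \mathcal{M}$ by the dimension of the translational symmetry group of $N_\infty$ now yields the conclusion. The top stratum is empty, since a blow-up $N_\infty$ invariant under $m-1$ independent translations would reduce to a one-dimensional homogeneous $\Dir$-minimizing special $Q$-valued function on $\R$, and the explicit classification of these forces them to be linear, contradicting $\alpha(q)\geq 2$. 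The remaining strata have Hausdorff dimension at most $m-2$ by Almgren's stratification theorem, and the bottom stratum is countable, covering the $m=2$ case.

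The main obstacle is the first step. At a flat singular point $q$ the normal approximation $N$ is small, since $T$ is close to a multiplicity-$Q$ plane, so one must rescale $N$ by a carefully chosen height scale to extract a nontrivial limit; the naive $L^2$-normalization of the classical Almgren scheme may produce a trivial limit precisely at flat singularities of density $Q$. Proposition~\ref{p:decay-improved} supplies the missing quantitative link: coupled with Theorem~\ref{t:uniqueness-tangent-plane}, it yields a precise comparison between the rescaled currents and the rescaled normal approximations, forcing them to converge to a common nontrivial object. Making this comparison rigorous, in particular showing that the Lipschitz and $W^{1,2}$ estimates for $N$ inherit the sharp almost quadratic decay of the excess and that the genuinely $Q$-valued part of $N$ survives the rescaling, will be the bulk of the technical work.
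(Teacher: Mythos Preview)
Your overall strategy is correct and matches the paper's approach: blow up the normal approximation $N$ on the center manifold at flat singular points, extract a nontrivial homogeneous $\Dir$-minimizing special $Q$-valued limit, use Theorem~\ref{thm:linear-what-is-needed} to get integer frequency, and then run a Federer-type reduction on the translational symmetries of the limit. The paper implements exactly this via Propositions~\ref{p:almost-monot}, \ref{p:blow-up}, and~\ref{p:reduction_argument}.

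There is one genuine gap: your exclusion of $\alpha(q)=1$. A $1$-homogeneous $\Dir$-minimizing special $Q$-valued function with $\etab\circ N_\infty\equiv 0$ is \emph{not} a superposition of globally linear sheets; by Lemma~\ref{l:homogeneous} it is the ``book'' configuration, singular along a hyperplane. Your appeal to $\epsilon$-regularity does not close the argument: the normalization factor $(r^{2-m}\bD(q,r))^{1/2}$ tends to zero, so even if the rescaled $N_{q,r}$ converges to a book, the \emph{unrescaled} current still has a flat tangent cone at $q$, and no contradiction with $q\in\Sing_f(T)$ is produced this way. The paper instead argues directly from the almost quadratic decay (Lemma~\ref{l:remove_assumption}): Proposition~\ref{p:decay-improved} gives $\|N\|_{L^\infty(\mathcal{B}_r(q))}\le Cr^{2-\delta}$, hence $\bH(q,r)\le Cr^{m+3-2\delta}$; on the other hand, if $\bI(q,s)\le \frac{5}{4}$ at some scale, the almost monotonicity propagates this bound downward and forces $\bH(q,r)\ge cr^{m+3-1/8}$, which is incompatible. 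Thus $\bI(q,0)\ge \frac{5}{4}$, and integrality yields $\bI(q,0)\ge 2$. This is precisely where the almost quadratic decay is used, not merely for nontriviality of the blow-up.

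A secondary point: the frequency you need is the one in Definition~\ref{d:frequency} for $N$ on the curved center manifold, not the exactly monotone quantity for $\Dir$-minimizers from \cite{DLHMS_linear}. Since $N$ is only an approximation to $T$, this frequency is only \emph{almost} monotone (Proposition~\ref{p:almost-monot}), and establishing this requires the first-variation estimates of Proposition~\ref{p:variation} together with the cube-size control \eqref{e:fine_cm2} from Proposition~\ref{p:contact}. You should flag this, rather than the nontriviality of the blow-up, as the place where the bulk of the technical work lies; once almost monotonicity and the lower bound $\bI\ge\frac{5}{4}$ are in hand, the compactness and homogeneity of the limit follow from the machinery already in \cite[Section~28]{DLHMS}.
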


The third step is a suitable almost monotonicity property of the frequency function of the normal approximation $N$ of $T$ over the center manifold $\mathcal{M}$. The normal approximation is borrowed from \cite[Section 17.3]{DLHMS}. In order to define the frequency function we introduce the following Lipschitz (piecewise linear, radial) weight 
\begin{equation*}
\phi (r) :=
\begin{cases}
1 & \text{for }\, r\in [0,\textstyle{\frac{1}{2}}],\\
2-2r & \text{for }\, r\in \,\, ]\textstyle{\frac{1}{2}},1],\\
0 & \text{for }\, r\in \,\, ]1,+\infty[\, ,
\end{cases}
\end{equation*}
and denote by $d (x,y)$ the geodesic distance between $x,y\in \mathcal{M}$. 
\begin{definition}[Frequency function]\label{d:frequency}
For every $r\in ]0,1[$ and every $q\in \mathcal{M}$ 
\[
\mathbf{D} (q, r) := \int_{\mathcal{M}} \phi\left(\frac{d (x,q)}{r}
\right)\,|D N|^2(x)\, dx\quad\mbox{and}\quad
\mathbf{H} (q,r) := - \int_{\mathcal{M}} \phi'\left(\frac{d (x,q)}{r}\right)\,\frac{|N|^2(x)}{d(x,q)}\, dx\, .
\]
If $\mathbf{H} (q, r) > 0$, we define the {\em frequency function}
$\mathbf{I} (q,r) := \frac{r\,\mathbf{D} (q,r)}{\mathbf{H} (q,r)}$. 
\end{definition}

The relevant conclusion is the following almost monotonicity formula for points $q$ in the set ${\rm Sing}_f (T) \cap\overline{\mathbf{B}}_1\cap \mathcal{M}$.

\begin{proposition}[Almost monotonicity of the frequency function]\label{p:almost-monot}
Let $T$ be as in Theorem \ref{t:even-reduced-2}. There exist $0 < \bar \eta < \eta(p,m)$ and $\bar r > 0$ such that the frequency function $\mathbf{I} (q, r)$ is well defined for every $(q,r)\in \left( {\rm Sing}_f (T) \cap\overline{\mathbf{B}}_{\bar\eta} \right) \times ]0,\bar r]$.
Moreover, there exist functions $\Lambda=\Lambda(q,r)$ and $\Xi = \Xi(q,r)$ on $\left( {\rm Sing}_f (T) \cap\overline{\mathbf{B}}_{\bar\eta} \right) \times ]0,\bar r]$ such that $0<\Lambda(q,r) \leq C\,r^\gamma$ and $0<\Xi(q,r) \leq C\, \bD^\gamma(q,r)$ for some $\gamma > 0$ and $C > 0$  and, moreover:
\begin{equation}\label{e:almost-monot}
\frac{d}{dr} \Big(\exp (\Lambda(q,r)) \mathbf{I} (q,r) + \Xi(q,r)\Big) \geq 0 \qquad\forall (q,r)\in \left( \Sing_f (T) \cap \overline{\bB}_{\bar \eta} \right) \times ]0,\bar r[\, .
\end{equation}
\end{proposition}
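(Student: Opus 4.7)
The plan is to apply the classical Almgren frequency monotonicity scheme, carried out in this geometric setting in \cite[Section 18]{DLHMS} and adapted to the present setup (special $Q$-valued approximation and varying base points $q \in \Sing_f(T) \cap \overline{\bB}_{\bar\eta}$) along the lines of \cite{DLHMSS}. Upon selecting $\bar\eta < \eta(p,m)$ small enough and $\bar r$ correspondingly, Proposition \ref{p:contact} guarantees that every admissible $q$ lies in the contact set $\Phi(\Gamma)$, so that the normal approximation $N$ is defined on a neighbourhood of $q$ in $\mathcal{M}$. Positivity of $\bH(q,r)$---needed for $\bI(q,r)$ to be defined---follows from a standard non-degeneracy argument for $N$ at flat singular points, exploiting the flat tangent cone at $q$ together with the decay of Proposition \ref{p:decay-improved}.

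The core computation differentiates $\log \bI(q,r)$ in $r$. The derivative of $\bD(q,r)$ can be rewritten, via the Lipschitz radial cutoff $\phi$ and the coarea formula, in terms of a one-sided sphere average of $|DN|^2$, which is related through an \emph{inner variation} of the area functional of $T$ (with respect to the geodesic radial vector field $\phi(d(x,q)/r)\,\nabla_\mathcal{M} d^2(\cdot,q)/2$) to the squared tangential radial derivative of $N$. An \emph{outer variation} along the section $\phi(d(x,q)/r)\,N$ yields the companion identity
\[
\bD(q,r) \;=\; -\int_\mathcal{M} \phi'\!\left(\tfrac{d(x,q)}{r}\right) \frac{N \cdot \partial_{\hat r} N}{r}\, d(x,q)\, dx \;+\; \mathrm{Err}_{\mathrm{out}}(q,r)\,,
\]
where $\partial_{\hat r}$ denotes the tangential geodesic radial derivative. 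Combined with the direct differentiation of $\bH(q,r)$ and Cauchy--Schwarz on the boundary integrand, these identities yield almost-monotonicity of $\bI(q,r)$ modulo the two errors $\mathrm{Err}_{\mathrm{out}}$ and $\mathrm{Err}_{\mathrm{in}}$, which after integration will determine $\Lambda$ and $\Xi$.

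Estimating these errors is the heart of the argument, and two contributions must be controlled. The first is geometric, arising from the curvature of $\mathcal{M}$ and the Taylor expansion of the pull-back of the area integrand: this is handled exactly as in \cite[Section 18]{DLHMS} and generates terms of the form $C\,r^\gamma\,\bD(q,r)+C\,\bD^{1+\gamma}(q,r)$. The second, more delicate, is the contribution of the \emph{gap region}---the union of Whitney cubes $L \in \sW$ close to $q$ over which $T$ does not coincide with the graph of $N$. Here the decisive new input is the enhanced center manifold bound \eqref{e:fine_cm2}: for every stopping cube $L$ close to $q$ one has $\ell(L)<\tfrac{1}{64\sqrt{m}}\,\dist(x_q,L)$. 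Coupled with the almost quadratic excess decay of Proposition \ref{p:decay-improved}, this shows that the mass, height and Dirichlet content of the gap inside $\bB_r(q)$ are bounded by a superlinear power of $r$, converting each gap error to the required form. Integration in $r$ then yields $\Lambda(q,r) \le C r^\gamma$ and $\Xi(q,r) \le C \bD^\gamma(q,r)$.

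I expect the main obstacle to be ensuring that a \emph{single} exponent $\gamma > 0$, depending only on $(p,m)$, controls all error rates uniformly in $q \in \Sing_f(T) \cap \overline{\bB}_{\bar\eta}$. This hinges on the uniformity of the power-law decay in Proposition \ref{p:decay-improved}---which itself rests on the Minter--Wickramasekera uniqueness of flat tangent cones---and on the rigidity provided by the classification of homogeneity values in Theorem \ref{thm:linear-what-is-needed}, which is what prevents $\gamma$ from degenerating to zero on the linearized side. Beyond this, the remaining technicalities (non-smoothness of $\phi$ at $r=1/2$, discontinuities at stopping-cube boundaries, and transition between dyadic scales) are standard and can be absorbed by mollification in $r$ combined with interpolation between neighbouring stopping scales.
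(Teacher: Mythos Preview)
Your high-level scheme (outer/inner variations of the area, Cauchy--Schwarz, and control of the gap region via the enhanced center manifold property \eqref{e:fine_cm2}) is the right one and matches the paper's approach. However, there is a genuine gap: you never establish a uniform \emph{lower} bound $\bI(q,r)\geq c_0>0$, and without it the argument does not close. The variation identities produce error terms of $\bSigma$-type (an $L^2$ norm of $N$), which a priori are only controlled by $\bH$, not $\bD$; to absorb these into $\bD$ (and hence obtain errors of the form $C r^\gamma \bD + C \bD^{1+\gamma}$, integrating to $\Lambda\le C r^\gamma$ and $\Xi\le C\bD^\gamma$) one needs $\bH(q,r)\le c_0^{-1} r\,\bD(q,r)$, i.e.\ precisely $\bI\geq c_0$. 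Your ``non-degeneracy'' remark only gives $\bH>0$, which is far too weak.

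The paper handles this by a bootstrap: first a crude \emph{upper} bound on $\bI$ (via the rough almost-monotonicity $\bI(a)\leq C_0(1+\bI(b))$), then a contradiction argument showing $\bI(q,r)\geq \tfrac54$ for all small $r$. The contradiction uses that if $\bI\leq \tfrac54$ on an interval, integrating $\frac{d}{dr}\log(r^{1-m}\bH)$ gives $\bH(r)\geq c\,r^{m+3-1/8}$, whereas the almost-quadratic height decay of Proposition~\ref{p:decay-improved} forces $\|N\|_{L^\infty(\mathcal{B}_r(q))}\leq C r^{2-\delta}$ and hence $\bH(r)\leq C r^{m+3-2\delta}$. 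This is where Proposition~\ref{p:decay-improved} (and, through it, the linear classification) actually enters the proof of Proposition~\ref{p:almost-monot}: not to control the gap errors in the variation identities---for that, \eqref{e:fine_cm2} alone suffices, exactly as in \cite{DLHMS}---and not to fix the exponent $\gamma$ (which is the $\gamma_3$ of the variation estimates and is independent of the linear theory), but to furnish the frequency lower bound that makes the error bookkeeping work.
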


In particular, we conclude that 
\[
\mathbf{I} (q,0):=\lim_{r\downarrow 0} \mathbf{I} (q,r) 
\]
is a well defined number at every $q\in \mathbf{S}_f:=\Sing_f (T) \cap \overline{\bB}_{\bar\eta} \subset\mathcal{M}$, due to \eqref{e:fine_cm1}. In the fourth step we use Proposition \ref{p:almost-monot} to analyze tangent functions to $N$ at every $q\in \mathbf{S}_f$. Tangent functions are defined following a path analogous to \cite[Section 28]{DLHMS}. We start denoting by $\mathbf{e} (q,\cdot): T_q \mathcal{M}\cap \mathbf{B}_1 \to \mathcal{M}$ the exponential function centered at $q\in \mathbf{S}_f$ and we then introduce the rescaled functions
\[
N_{q,r} (x) := \frac{N (\mathbf{e} (q, rx))}{\left( r^{2-m} \mathbf{D} (q,r)\right)^{\sfrac{1}{2}}}\, , \qquad x \in T_q\mathcal{M} \cap \bB_1\,. 
\]
We then use the blow-up argument of \cite[Section 28]{DLHMS}, and we combine the conclusion of Proposition \ref{p:decay-improved} with a classification of the possible frequency values of homogeneous Dir-minimizing special $Q$-valued functions (see Theorem \ref{thm:linear-what-is-needed} below) to achieve the following

\begin{proposition}[Blow-up]\label{p:blow-up}
Let $T$ be as in Proposition \ref{p:almost-monot}. For every $q\in \mathbf{S}_f$ and any sequence $r_k\downarrow 0$ there is a subsequence, not relabeled, such that
\begin{itemize}
    \item[(i)] $N_{q,r_k}$ converges, strongly in $W^{1,2}_{loc}$, to a Dir-minimizing special $Q$-valued function $\bar{N} \colon B_1 \subset \R^m \to \mathscr{A}_Q(\R)$, with nonzero Dirichlet energy;
    \item[(ii)] $\mathbf{\eta}\circ \bar N = 0$ and $\bar N (0) = Q \a{0}$; 
    \item[(iii)] $\bar N$ is $\mathbf{I} (q,0)$-homogeneous;
    \item[(iv)] $\mathbf{I} (q,0)$ is an integer strictly larger than $1$.
\end{itemize}
\end{proposition}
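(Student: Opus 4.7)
The plan is to adapt the blow-up procedure of \cite[Section 28]{DLHMS} to the special $Q$-valued setting of \cite{DLHMS_linear}, combining three ingredients: the compactness and minimality of blow-ups of the normal approximation $N$, the almost monotonicity of the frequency function (Proposition \ref{p:almost-monot}), and the almost quadratic decay of the excess (Proposition \ref{p:decay-improved}) together with the integer classification of admissible frequencies (Theorem \ref{thm:linear-what-is-needed}). By construction, $\int \phi\,|D N_{q, r_k}|^2$ is of order one on $B_1$, so up to extracting a subsequence $N_{q, r_k}$ converges weakly in $W^{1,2}_{\rm loc}$ to some $\bar N\colon B_1 \to \mathscr{A}_Q(\R)$. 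Upgrading to strong $W^{1,2}_{\rm loc}$ convergence and showing that $\bar N$ is Dir-minimizing follows from the standard harmonic-competitor comparison on the center manifold, in which one uses that $T$ is area-minimizing $\modp$ and the Lipschitz-approximation/energy estimates of \cite[Section 17]{DLHMS}, adapted to $\mathscr{A}_Q(\R)$-valued maps via \cite{DLHMS_linear}. The vanishing of $\boldsymbol{\eta}\circ \bar N$ and the condition $\bar N(0)=Q\a{0}$ pass to the limit from the corresponding properties of $N$ on $\mathcal{M}$: the first from the construction of the normal approximation, the second from $q\in \Sing_f(T)\cap\mathcal{M}$ being a density-$Q$ point via \eqref{e:fine_cm1}. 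Nontriviality of $\bar N$, equivalently $\int |D\bar N|^2 > 0$, reduces to showing that the normalization factor $(r^{2-m}\bD(q,r))^{1/2}$ does not degenerate; this follows from Proposition \ref{p:decay-improved}, since the almost quadratic decay of $\bE^{no}$ forces $\bD(q,r)$ and $\bH(q,r)$ to decay no faster than a fixed power of $r$.

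Homogeneity follows from the almost monotonicity of the frequency: the rescaling is designed so that the frequency of $N_{q,r_k}$ at $0$ and radius $\rho \in (0,1]$ equals $\mathbf{I}(q, r_k\rho)$ up to the correction $\exp(\Lambda)+\Xi$ of Proposition \ref{p:almost-monot}. Since both correction terms tend to $0$ and $\mathbf{I}(q, r_k\rho)\to \mathbf{I}(q,0)$ for every fixed $\rho$, the limit $\bar N$ has constant frequency equal to $\mathbf{I}(q,0)$ on every ball $B_\rho\subset B_1$, and the classical characterization of Dir-minimizers of constant frequency---extended to the $\mathscr{A}_Q(\R)$-valued framework---forces $\bar N$ to be $\mathbf{I}(q,0)$-homogeneous. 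Since $\bar N$ is then a nontrivial homogeneous Dir-minimizer with average zero and value $Q\a{0}$ at the origin, Theorem \ref{thm:linear-what-is-needed} gives $\mathbf{I}(q,0)\in \N$. To upgrade to $\mathbf{I}(q,0) > 1$, I compare the scaling of a frequency-$\alpha$ homogeneous map with Proposition \ref{p:decay-improved}: a frequency-$\alpha$ blow-up produces an excess decay of order $r^{2\alpha-2}$, whereas Proposition \ref{p:decay-improved} guarantees a decay $r^{2-2\delta}$ for every $\delta>0$; hence $\alpha \geq 2-\delta$ for all $\delta$, and by integrality $\mathbf{I}(q,0)\geq 2$.

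The most delicate step is the lower bound yielding nontriviality of $\bar N$. The bare power-law decay of Theorem \ref{t:uniqueness-tangent-plane} is insufficient, as it would a priori allow $\bD(q,r)$ to decay faster than $\bH(q,r)$ and collapse the blow-up to zero; this is precisely why the almost quadratic decay of Proposition \ref{p:decay-improved}---whose proof already uses Theorem \ref{thm:linear-what-is-needed}---is assumed as a prerequisite. It pins down the joint scaling of the Dirichlet energy of $N$ and the $L^2$ norm of $N$ on geodesic balls, guaranteeing that the normalization used to define $N_{q,r}$ produces a nondegenerate limit and, via the integer classification, the bound $\mathbf{I}(q,0)\geq 2$.
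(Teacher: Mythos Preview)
Your overall structure matches the paper's: invoke the blow-up machinery of \cite[Section 28]{DLHMS}, use the almost monotonicity of $\bI$ to get homogeneity, and then apply Theorem~\ref{thm:linear-what-is-needed} for the integrality. But your treatment of nontriviality and of (iv) has issues.

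\textbf{Nontriviality.} Your claim that Proposition~\ref{p:decay-improved} ``forces $\bD(q,r)$ and $\bH(q,r)$ to decay no faster than a fixed power of $r$'' has the direction reversed: Proposition~\ref{p:decay-improved} is an \emph{upper} bound on the excess (and hence on $\bD$, $\bH$), not a lower bound. Likewise, ``the normalization factor does not degenerate'' is not the right formulation---with the $\bD$-normalization, the rescaled maps have unit weighted Dirichlet energy by construction; the issue is whether that energy survives on compact subsets. The paper resolves this cleanly and differently: \cite[Theorem~28.2]{DLHMS} produces a nontrivial blow-up for the $\bH$-normalized sequence $N^b_k(x)=N(\be(q,r_kx))/(r_k^{1-m}\bH(q,r_k))^{1/2}$, and Proposition~\ref{p:almost-monot} guarantees that $\lim_k r_k\bD(q,r_k)/\bH(q,r_k)=\bI(q,0)$ is finite and positive, so the $\bD$-normalized limit $\bar N$ is just a positive scalar multiple of the $\bH$-normalized one. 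Proposition~\ref{p:decay-improved} plays no role in nontriviality.

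\textbf{The lower bound $\bI(q,0)>1$.} Your argument ``frequency-$\alpha$ blow-up produces excess decay $r^{2\alpha-2}$'' presupposes a transfer from the homogeneity of $\bar N$ back to a \emph{lower} bound on $\bE^{no}(T,\bB_r(q))$, which you have not justified. The paper instead establishes $\bI(q,r)\geq \tfrac{5}{4}$ for all small $r$ directly (Lemma~\ref{l:remove_assumption}), \emph{before} the blow-up, by an ODE argument: assuming $\bI\leq \tfrac{5}{4}$ at some scale and integrating \eqref{e:H'bis}--\eqref{e:out-with-H} yields $\bH(r)\gtrsim r^{m+3-1/8}$, while the height decay of Proposition~\ref{p:decay-improved} gives $\bH(r)\lesssim r^{m+3-2\delta}$, a contradiction. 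This lower bound is in fact needed to close the proof of Proposition~\ref{p:almost-monot} itself (removing the hypothesis $\bI\geq c_0$ in Lemma~\ref{l:almost_monot}), so logically it precedes the blow-up rather than following from it. Once $\bI(q,0)\geq \tfrac{5}{4}$, integrality from Theorem~\ref{thm:linear-what-is-needed} gives $\bI(q,0)\geq 2$.
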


We next stratify $\mathbf{S}_f$ using the values of $\mathbf{I} (q,0)$. More precisely, for every integer $k\geq 2$ we introduce 
\[
\mathbf{S}_f (k) := \{q\in \mathbf{S}_f: \mathbf{I} (q,0) = k\}\, .
\]
The following useful fact follows from the upper semicontinuity of the map $\mathbf{I}(\cdot,0)$.

\begin{lemma}\label{l:closed}
Let $T$ be as in Proposition \ref{p:almost-monot}. Then for every $q\in \mathbf{S}_f (k)$ there is a closed ball $\overline{\mathbf{B}}_r (q)$ with the property that $\mathbf{S}_f (k) \cap \overline{\mathbf{B}}_r (q)$ is a closed set.
\end{lemma}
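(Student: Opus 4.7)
The plan is to deduce the statement from upper semicontinuity of the map $q\mapsto \bI(q,0)$ on $\mathbf{S}_f$, combined with the integer-valued nature of this function established in Proposition \ref{p:blow-up}(iv). Indeed, once upper semicontinuity is available, for $q_0\in\mathbf{S}_f(k)$ one finds $r>0$ such that $\bI(q,0)<k+\tfrac12$ on $\mathbf{S}_f\cap\overline{\bB}_r(q_0)$, and the integer gap then forces $\bI(\cdot,0)\le k$ on the same set. Consequently
\[
\mathbf{S}_f(k)\cap\overline{\bB}_r(q_0)=\{q\in\mathbf{S}_f\cap\overline{\bB}_r(q_0)\colon \bI(q,0)\ge k\},
\]
which is closed in $\mathbf{S}_f\cap\overline{\bB}_r(q_0)$ by upper semicontinuity.

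To establish upper semicontinuity of $\bI(\cdot,0)$ I would invoke Proposition \ref{p:almost-monot}: the function $G(q,r):=\exp(\Lambda(q,r))\,\bI(q,r)+\Xi(q,r)$ is non-decreasing in $r\in ]0,\bar r]$, and since $\Lambda(q,r),\Xi(q,r)\to 0$ as $r\downarrow 0$, it satisfies
\[
\bI(q,0)=\inf_{r\in ]0,\bar r]} G(q,r).
\]
For each fixed $r>0$, the maps $q\mapsto \bD(q,r)$ and $q\mapsto \bH(q,r)$ are continuous on $\mathcal{M}$, since they are integrals of the fixed $L^2$ quantities $|DN|^2$ and $|N|^2/d(\cdot,q)$ against the Lipschitz radial weight $\phi(d(\cdot,q)/r)$, and this weight depends Lipschitz-continuously on the basepoint $q$. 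Provided $\bH(q,r)>0$---a condition that can be arranged in a small neighborhood of any point of $\mathbf{S}_f$ by the same reasoning underlying Proposition \ref{p:almost-monot}---this yields continuity of $\bI(\cdot,r)$, and analogous considerations give continuity of $\Lambda(\cdot,r)$ and $\Xi(\cdot,r)$. Thus each $G(\cdot,r)$ is continuous, and $\bI(\cdot,0)$, being a pointwise infimum of continuous functions, is upper semicontinuous.

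To upgrade closedness in $\mathbf{S}_f$ to closedness as a subset of the ambient space, I would further shrink $r$ so that any limit of a sequence in $\mathbf{S}_f\cap\overline{\bB}_r(q_0)$ remains a flat singular point. This follows by standard arguments: limits of singular points are singular (the regular set is open), while upper semicontinuity of the density $\Theta_T$, together with the fact that $\Theta_T\equiv Q$ on $\Sing_f$, forces the limit to have density $Q$; combined with the uniqueness of tangent cones in Theorem \ref{t:uniqueness-tangent-plane}, which rules out that a regular point be approximated by a sequence of density-$Q$ flat singularities with distinct tangent planes, the limit must itself belong to $\Sing_f$.

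The main obstacle is the verification that the correction terms $\Lambda(q,r)$ and $\Xi(q,r)$ produced in Proposition \ref{p:almost-monot} actually depend continuously on $q$ at fixed $r$. These are built from several geometric objects---the center manifold $\mathcal{M}$, the normal approximation $N$, and the exponential map $\mathbf{e}(q,\cdot)$---whose dependence on the basepoint is continuous, but confirming this requires a careful unpacking of the derivation of the almost-monotonicity formula and tracking the $q$-dependence of each error term; this is expected to be routine but not entirely automatic.
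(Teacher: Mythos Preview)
Your approach is essentially the same as the paper's: upper semicontinuity of $\bI(\cdot,0)$ from the almost-monotonicity, the integer gap from Proposition \ref{p:blow-up}(iv), and closedness of $\mathbf{S}_f$ itself. Two remarks are in order.

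First, the concern you flag as the ``main obstacle''---continuity of $\Lambda(\cdot,r)$ and $\Xi(\cdot,r)$ in $q$---is unnecessary. The uniform bounds $0<\Lambda(q,r)\le Cr^\gamma$ and $0<\Xi(q,r)\le C\,\bD^\gamma(q,r)$ from Proposition \ref{p:almost-monot}, together with continuity of $\bI(\cdot,r)$ and $\bD(\cdot,r)$ for fixed $r$, already suffice: for $q_i\to q$ in $\mathbf{S}_f$ one has
\[
\limsup_i \bI(q_i,0)\le \limsup_i\bigl(e^{\Lambda(q_i,r)}\bI(q_i,r)+\Xi(q_i,r)\bigr)\le e^{Cr^\gamma}\bI(q,r)+C\,\bD^\gamma(q,r),
\]
and letting $r\downarrow 0$ gives $\limsup_i\bI(q_i,0)\le\bI(q,0)$.

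Second, your argument that $\mathbf{S}_f$ is closed is not quite right as stated. The density is \emph{lower} semicontinuous, not upper, so you only get $\Theta_T(\bar q)\ge Q$ at a limit point $\bar q$; this alone does not force a flat tangent cone there, and Theorem \ref{t:uniqueness-tangent-plane} cannot be invoked at $\bar q$ since its hypothesis is precisely $\bar q\in\Sing_f(T)$. The paper instead applies Theorem \ref{t:uniqueness-tangent-plane} at the approximating points $q_i\in\Sing_f(T)$ to get the uniform decay $\bE^{no}(T,\bB_r(q_i))\le C(r/\rho)^\alpha$, and then passes this estimate to the limit in $i$ to conclude that the tangent cone at $\bar q$ is flat. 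This is the clean way to close the argument.
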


In order to prove Theorem \ref{t:even-reduced-2} it will suffice to show that, for every integer $k \geq 2$,
$\mathcal{H}^{m-2+\delta} (\mathbf{S}_f (k)) =0$ for every $\delta > 0$, and $\mathbf{S}_f (k)$ is discrete when $m=2$. As in the standard Federer's reduction argument we will argue by contradiction and assume the latter is false. Standard measure theory would then lead to the following:
\begin{itemize}
    \item[(a)] either $m\geq 3$ and there are $\delta > 0$ and, for some $k \ge 2$, a point $\bar{q}\in \mathbf{S}_f (k)$ with the property that
\begin{equation}\label{e:lower-density}
\limsup_{r\downarrow 0} \frac{\mathcal{H}^{m-2+\delta}_\infty (\mathbf{S}_f (k) \cap \mathbf{B}_r (\bar q))}{r^{m-2+\delta}} > 0\, ;
\end{equation}
    \item[(b)] or $m=2$ and $\bar q\in \bS_f (k)$ is not isolated.
\end{itemize}

The convergence in Proposition \ref{p:blow-up} will then be used to infer, from the validity of the above contradiction assumption (a)-(b), the following conclusion.

\begin{proposition}\label{p:reduction_argument}
Assume $T$ is as in Proposition \ref{p:blow-up} and assume that (a) or (b) as above hold. Then, there are a sequence $r_j\downarrow 0$ and a tangent function $\bar N$ as in Proposition \ref{p:blow-up} with the additional property that there is an $(m-1)$-dimensional subspace $V \subset \R^m$ such that $\bar N$ is invariant under translations along any $v\in V$.
\end{proposition}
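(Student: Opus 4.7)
The strategy is a classical Federer-style dimension reduction, transferred to the tangent function $\bar N$ via Proposition \ref{p:blow-up}. I fix $\bar q \in \mathbf{S}_f(k)$ satisfying (a) or (b), work in normal coordinates on $\mathcal{M}$ centered at $\bar q$ through $\mathbf{e}(\bar q, \cdot)$, and introduce the rescaled singular sets
\[
K_r := \tfrac{1}{r} \, \mathbf{e}(\bar q, \cdot)^{-1}\!\big( \mathbf{S}_f(k) \cap \mathbf{e}(\bar q, \bB_r) \big) \cap \overline{\bB}_1 \subset T_{\bar q}\mathcal{M}\,.
\]
In case (a) I pick $r_j \downarrow 0$ along which the $\limsup$ in \eqref{e:lower-density} is attained, so that $\mathcal{H}^{m-2+\delta}_\infty(K_{r_j}) \geq c > 0$; in case (b) with $m = 2$ I set $r_j := d(q_j, \bar q)$ for a sequence $q_j \in \mathbf{S}_f(k) \setminus \{\bar q\}$ with $q_j \to \bar q$, so that each $K_{r_j}$ contains a point of norm $1$. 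Extracting a subsequence, $K_{r_j} \to K$ in Hausdorff distance in $\overline{\bB}_1$; by lower semicontinuity of Hausdorff content we still have $\mathcal{H}^{m-2+\delta}_\infty(K) \geq c$ in case (a), and in case (b) $K$ still contains a point of norm $1$. Since $\bar q \in \mathbf{S}_f(k)$, $0 \in K$.

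Along the same sequence $r_j$, and passing to a further subsequence, Proposition \ref{p:blow-up} yields the tangent function $\bar N \colon \bB_1 \to \mathscr{A}_Q(\R)$, which is $k$-homogeneous because $\mathbf{I}(\bar q, 0) = k$. The decisive claim is that the Dir-minimizing frequency of $\bar N$ centered at every $q_\infty \in K$ equals $k$. Given $q_\infty \in K$, pick $q_j \in \mathbf{S}_f(k)$ with $\tilde q_j := r_j^{-1}\mathbf{e}(\bar q, \cdot)^{-1}(q_j) \to q_\infty$. The almost monotonicity of Proposition \ref{p:almost-monot} applied at $q_j$, together with $\mathbf{I}(q_j, 0) = k$, gives $\mathbf{I}(q_j, s) \geq k - o(1)$ as $s \downarrow 0$; the scaling invariance of the frequency translates this into a uniform lower bound for the frequency of $N_{\bar q, r_j}$ around $\tilde q_j$ at comparable scales. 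Passing to the limit using the strong $W^{1,2}_{\mathrm{loc}}$ convergence $N_{\bar q, r_j} \to \bar N$ from Proposition \ref{p:blow-up}(i), and invoking the integer classification of frequencies for Dir-minimizing special $Q$-valued functions in Theorem \ref{thm:linear-what-is-needed}, we conclude that the frequency of $\bar N$ at $q_\infty$ is an integer $\geq k$. Since $\bar N$ is $k$-homogeneous at $0$, for $s$ large its frequency at $q_\infty$ is arbitrarily close to $k$; by monotonicity of the frequency for the Dir-minimizer $\bar N$, this forces the frequency at $q_\infty$ to equal exactly $k$.

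I then invoke the cone splitting principle: a nontrivial $k$-homogeneous Dir-minimizing special $Q$-valued function whose frequency at a second point $q_\infty \neq 0$ also equals $k$ is translation invariant along the line through $q_\infty$. This is the standard equality case in the frequency monotonicity identity taken with respect to the two centers $0$ and $q_\infty$, in the special $Q$-valued setting of \cite{DLHMS_linear}. Applied at every $q_\infty \in K \setminus \{0\}$, it shows that $\bar N$ is translation invariant along every direction in $V := \mathrm{span}(K) \subset \R^m$. A dimension count gives $\dim V \geq m - 1$: in case (a), $\dim V \leq m - 2$ would force $\mathcal{H}^{m-2+\delta}(K) = 0$, contradicting $\mathcal{H}^{m-2+\delta}_\infty(K) \geq c$; in case (b), $V$ contains a nonzero vector so $\dim V \geq 1 = m - 1$. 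Non-triviality of $\bar N$ (nonzero Dirichlet energy, Proposition \ref{p:blow-up}(i)) rules out $\dim V = m$, hence $\dim V = m - 1$ exactly.

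The main technical obstacle I anticipate is the transfer of the almost-monotone frequency lower bound from the moving current-side points $q_j$ to the frequency of the Dir-minimizing limit $\bar N$ at the limit point $q_\infty$. This requires controlling the error terms $\Lambda, \Xi$ in Proposition \ref{p:almost-monot} uniformly for $q$ varying in $\mathbf{S}_f(k)$ near $\bar q$, matching the Dirichlet-energy normalization $(s^{2-m}\mathbf{D}(q,s))^{-1/2}$ between the centers $\bar q$ and $q_j$ so that rescaling at $\bar q$ does not distort frequency at $\tilde q_j$, and ruling out any concentration of the Dirichlet energy of $N_{\bar q, r_j}$ along $\tilde q_j \to q_\infty$, for which the strong (not merely weak) $W^{1,2}_{\mathrm{loc}}$ convergence of Proposition \ref{p:blow-up}(i) is essential.
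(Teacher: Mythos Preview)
Your argument is correct and follows essentially the same route as the paper: rescale $\mathbf{S}_f(k)$ around $\bar q$, pass to a Hausdorff limit set, transfer the frequency lower bound $k$ to each limit point via the almost monotonicity of Proposition~\ref{p:almost-monot} and the strong $W^{1,2}$ convergence, then apply cone splitting and a dimension count on the span. Two minor remarks: the relevant property of $\mathcal{H}^{m-2+\delta}_\infty$ under Hausdorff convergence of compacta is \emph{upper} semicontinuity (not lower, though you draw the correct conclusion), and the appeal to the integer classification of Theorem~\ref{thm:linear-what-is-needed} is unnecessary at that stage since the limit already yields frequency $\geq k$ directly.
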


However, a Dir-minimizing special $Q$-valued function $\bar N$ as in Proposition \ref{p:reduction_argument} does not exist, due to the validity of the following structural theorem on the singular set of $\Dir$-minimizing special $Q$-valued functions with target $\R$. We refer the reader to \cite{DLHMS_linear} and to Section \ref{s:linear_thms} for the relevant notation concerning special multiple-valued functions.

\begin{theorem} \label{thm:linear-what-is-needed}
Let $\Omega \subset \R^m$ be open and connected, and let $u \in W^{1,2}(\Omega, \Iqspecl)$ be $\Dir$-minimizing with $\Dir(u,\Omega) > 0$ and $\etab \circ u \equiv 0$. Then the following holds.
\begin{itemize}
    \item[(i)] The singular set $\sing (u)$ can be decomposed as the disjoint union $\sing_{m-1} (u) \cup \Sigma (u)$, where $\Sigma (u)$ is a closed set with Hausdorff dimension at most $m-2$ and
$\sing_{m-1} (u)$ is an $(m-1)$-dimensional submanifold of $\Omega$ of class $C^{1,\alpha}$, where $\alpha=\alpha(m,Q)$; moreover $\Sigma (u)$ is a discrete set if $m=2$.
\item[(ii)] For every $x \in \sing (u)$, the value $I_{x,u} (0)$ of the frequency function is a positive integer, and $\sing_{m-1}(u) = \left\lbrace x \in \sing (u) \, \colon \, I_{x,u}(0) = 1 \right\rbrace$.
\item[(iii)] In particular, if $u$ is $\alpha$-homogeneous, then $\alpha = I_{x,u}(0)$ is a positive integer.
\end{itemize}
\end{theorem}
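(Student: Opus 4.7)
The plan is to prove (iii) first and then deduce (ii) and (i) via a stratification argument. The central ingredients are the frequency monotonicity for Dir-minimizing special $Q$-valued functions, an epiperimetric inequality tailored to $\mathscr{A}_Q(\R)$-valued maps with $\etab \circ u \equiv 0$, and Almgren's stratification theorem.

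To prove (iii), I first observe that the standard inner/outer variation identities extend to this setting, yielding the monotonicity of the frequency function. Hence at each $x \in \Omega$ the value $I_{x,u}(0) := \lim_{r \downarrow 0} I_{x,u}(r)$ exists and coincides with the homogeneity degree of any non-trivial blow-up of $u$ at $x$; so it suffices to classify the admissible degrees $\alpha$ of $\alpha$-homogeneous Dir-minimizers $v \colon \R^m \to \mathscr{A}_Q(\R)$ with $\etab \circ v \equiv 0$. I would proceed by a synchronized induction on $m$. For $m = 1$ an explicit analysis of $\alpha$-homogeneous special $Q$-valued functions, whose branches must be $c_i |t|^\alpha$ with a sign pattern that is either constant or flips at $0$, combined with Dir-minimality tested against competitors that unfold sign flips into smooth polynomial branches, forces $\alpha \in \mathbb{Z}_{\geq 1}$. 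For $m \geq 2$ the step is closed by an epiperimetric inequality: a hypothetical non-integer homogeneous minimizer $v$ is compared against a competitor obtained by interpolating inside $B_1$ between $v$ and its boundary-trace extended through the $\mathscr{A}_Q(\R)$-valued analogue of a spherical harmonic decomposition on $\Sf^{m-1}$. The interpolation respects both the branching/cancellation structure of $\mathscr{A}_Q(\R)$ and the mean-zero constraint $\etab \equiv 0$, and a dimensional strict energy improvement contradicts minimality, forcing $\alpha \in \mathbb{Z}_{\geq 1}$.

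For (ii), any singular point admits a non-trivial blow-up (otherwise $u$ is locally regular), hence by (iii) $I_{x,u}(0) \in \mathbb{Z}_{\geq 1}$. For (i) and the characterization of $\sing_{m-1}(u)$, I introduce the Almgren strata $\mathcal{S}^k(u) := \{x \in \sing(u) : \text{no blow-up of } u \text{ at } x \text{ is invariant under more than } k \text{ translations}\}$; Almgren's stratification applied to the monotone frequency function yields $\dim_{\Ha} \mathcal{S}^k(u) \leq k$ and $\mathcal{S}^0(u)$ discrete when $m=2$. Setting $\Sigma(u) := \mathcal{S}^{m-2}(u)$, at any point $x$ of the complement $\sing_{m-1}(u) := \sing(u)\setminus \Sigma(u)$ every blow-up is translation-invariant along some $(m-1)$-plane and is therefore determined by its trace on a transverse line, which is a non-trivial homogeneous Dir-minimizer of integer frequency. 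A direct case analysis of integer-homogeneous $1$D special $Q$-valued minimizers with $\etab = 0$ shows that only the frequency-$1$ configurations produce a genuine singular point at $0$: integer-degree homogeneous $1$D minimizers of frequency $\geq 2$ admit smooth polynomial branches at $0$ after resolving the sign identifications of $\mathscr{A}_Q(\R)$, hence represent regular points. This forces $I_{x,u}(0) = 1$ on $\sing_{m-1}(u)$. The $C^{1,\alpha}$ regularity is obtained by upgrading the epiperimetric inequality to a power-rate decay of the excess from the (now unique) tangent cone, which yields Hölder dependence of the tangent plane on the base point, and a classical improvement-of-flatness scheme then closes the argument.

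The principal technical obstacle is the epiperimetric inequality. The improved competitor must respect the gluing structure of $\mathscr{A}_Q(\R)$, match the trace of $v$ on $\partial B_1$ exactly, and preserve the constraint $\etab \circ v \equiv 0$; a naive homogeneous extension plus interpolation construction easily creates spurious branches or destroys the mean-zero condition. The argument is therefore set up by contradiction and compactness, which in turn relies on the classification of homogeneous minimizers in the same dimension — any gap in that classification immediately breaks compactness, so the two ingredients must be proved hand-in-hand rather than in a linear sequence.
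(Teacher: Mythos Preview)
Your inductive framework, the use of Almgren stratification, and the role of an epiperimetric inequality for the $C^{1,\alpha}$ regularity of the top stratum all align with the paper. However, the heart of your argument for (iii)---using an epiperimetric inequality to rule out non-integer homogeneity degrees---is a genuine gap, and this is precisely the place where the paper supplies a different and decisive idea.

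The difficulty is this: an epiperimetric inequality compares the energy of the homogeneous extension $u^I$ of the boundary trace to that of a better competitor, and yields a decay of the Weiss functional $W$. But an $\alpha$-homogeneous minimizer $v$ already has $W_\alpha(v,r)\equiv 0$; there is no ``strict energy improvement'' available to contradict minimality. To make a spectral-gap argument work you would need to decompose $v|_{\partial B_1}$ into eigenfunctions of a Laplacian on $\mathscr{A}_Q(\R)$-valued maps, and no such linear spectral theory exists for the non-linear target. The paper's epiperimetric inequality (its Lemma 6.2) is proved \emph{only} for frequency $1$, and it works because $1$-homogeneous minimizers are classified by an elementary argument (they are supported on two half-spaces separated by a hyperplane), so that close to one of them the domain $\partial B_1$ decomposes into two large caps plus thin leftover regions, and on each cap the map is a classical vector-valued function amenable to ordinary spherical-harmonic analysis. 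This is then fed into the regularity of $\sing_{m-1}(u)$, not into the classification of degrees.

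The paper's actual mechanism for (iii) is the following. Assuming the theorem in dimension $m$, take an $\alpha$-homogeneous minimizer $u$ in $\R^{m+1}$. By induction the frequency at every $x\neq 0$ is an integer, so $u$ is Lipschitz and $\sing_{m}(u)$ is $C^{1,\alpha}$ away from a set $\Sigma$ of dimension $\leq m-1$. On each connected component $F$ of $\partial B_1\setminus\sing(u)$ the branches $u_i$ are Dirichlet eigenfunctions on $F$; using $\etab\circ u=0$ one shows each $u_i$ has a sign, hence is a multiple of the first eigenfunction $\varphi_F$. One then defines the \emph{scalar} function $p := |u^+|-|u^-|$, checks via the transmission condition $\sum_i|\nabla u_i^+|^2=\sum_i|\nabla u_i^-|^2$ (coming from inner variations) that $p$ is $C^1$ across $\sing_m(u)$, and removes the lower stratum $\Sigma$ by a capacity argument using Lipschitz continuity. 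Thus $p$ is a global harmonic function, homogeneous of degree $\alpha$, hence a harmonic polynomial, and $\alpha\in\mathbb Z_{\geq 1}$. This harmonic-polynomial reduction is the missing idea in your proposal; without it the induction cannot close.
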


Combined with Proposition \ref{p:reduction_argument}, this in particular shows that (a) or (b) lead to a contradiction: there is no $\Dir$-minimizing $\alpha$-homogeneous special $Q$-valued function with target $\R$ admitting an $(m-1)$-dimensional subspace of singular points of frequency $\alpha \ge 2$. Hence, this completes our argument.

\medskip

The proof of Theorem \ref{thm:linear-what-is-needed} will occupy the first part of the paper, from Section \ref{s:linear_thms} to Section \ref{s:linear_proofs}. In fact, we will provide a very precise description of the asymptotic behavior of a special $\Dir$-minimizing map $u$ at points of the top-dimensional part $\sing_{m-1}(u)$ of the singular set, see Theorem \ref{thm:main}. Notice that, as claimed in (ii), we are able to identify $\sing_{m-1}(u)$ ``analytically'' via the frequency function. In addition, as specified in (iii), we are able to classify all possible homogeneous $\Dir$-minimizing functions, which in turn provide the tangent functions at singular points: in particular, we have some nontrivial information also on the points which belong to $\Sigma(u) = \sing(u) \setminus \sing_{m-1}(u)$. The key behind such classification is that homogeneous $\Dir$-minimizers can be related in a clean way to homogeneous (classical) harmonic polynomials, see Theorem \ref{thm:main3} below, hence the conclusion that the possible degrees of homogeneity are integers.    

\medskip

The second part of the paper will instead contain the proofs of all the other facts stated in this synopsis. In particular: Proposition \ref{p:decay-improved} is proved in Section \ref{s:improved-decay}; Section \ref{s:supercm} contains the proof of Proposition \ref{p:contact}; Propositions \ref{p:almost-monot} and \ref{p:blow-up} are proved in Section \ref{s:monot}; finally, in Section \ref{s:conclusion} we prove Lemma \ref{l:closed} and Proposition \ref{p:reduction_argument}, and then we complete the arguments for Theorem \ref{t:even} and Theorem \ref{t:even-structure}.

\newpage

\part{Linear theory}

\section{Overview} \label{s:linear_thms}

The goal of this part is to prove Theorem \ref{thm:linear-what-is-needed} on $\Dir$-minimizing \emph{special} $Q$-valued functions with target $\R$. The reader can refer to \cite{DLHMS_linear} for the relevant terminology and notation. In particular, the set of special $Q$-points in $\R^n$ is denoted $\mathscr{A}_Q(\R^n)$, and it is defined as the quotient
\[
\mathscr{A}_Q(\R^n) := \mathcal{A}_Q(\R^n) \sqcup \mathcal{A}_Q(\R^n) / \sim
\]
of the disjoint union $\mathcal{A}_Q(\R^n) \sqcup \mathcal{A}_Q(\R^n) := \{(T,\varepsilon)\,\colon\,T \in \mathcal{A}_Q(\R^n),\, \varepsilon \in \{-1,1\}\}$ (here, $\mathcal{A}_Q(\R^n)$ denotes the set of classical $Q$-points in $\R^n$, see \cite{DLS_Qvfr}) through the equivalence relation
\begin{eqnarray*}
(T,1) \sim (S,1) &\iff& T=S\,, \\
(T,-1) \sim (S,-1) &\iff& T=S\,, \\
(T,1) \sim (S,-1) &\iff& \mbox{$T=Q\a{P}=S$ for some $P \in \R^n$}\,.
\end{eqnarray*}
One endows $\mathscr{A}_Q(\R^n)$ with the metric space structure generated by the pseudometric in $\mathcal{A}_Q(\R^n) \sqcup \mathcal{A}_Q(\R^n)$ given by
\[
\G_s((T,\alpha),(S,\beta))^2:=
\begin{cases}
\G(T,S)^2 & \mbox{if $\alpha=\beta$}\,,\\
\abs{T\ominus\bfeta(T)}^2 + \abs{S\ominus\bfeta(S)}^2 + Q \abs{\bfeta(T)-\bfeta(S)}^2 & \mbox{otherwise}\,,
\end{cases}
\]
where, for $T=\sum \a{P_i}$ and $S=\sum\a{\tilde P_i}$
\[
\G (T,S)^2 := \min\left\lbrace \sum_{i=1}^Q \abs{P_i - \tilde P_{\sigma(i)}}^2 \, \colon \, \mbox{$\sigma$ is a permutation of $\{1,\ldots,Q\}$} \right\rbrace\,,
\]
and where $\bfeta(T):=Q^{-1}\sum_i P_i \in \R^n$, $\abs{T}:= \G(T, Q\a{0})$, and, for $a\in\R^n$, $T\ominus a := \sum_i \a{P_i-a}$.

Let $\Omega \subset \R^m$ be open and bounded. A (Borel) measurable map $u \colon \Omega \subset \R^m \to \mathscr{A}_Q(\R^n)$ induces a pair of maps $u^\pm \colon \Omega \to \mathcal{A}_Q(\R^n)$ defined by \begin{equation} \label{d:pm}
u^\pm (x) := 
\begin{cases}
T & \mbox{if $u(x)=(T,\pm 1)$}\,,\\
Q\a{\bfeta(T)} & \mbox{if $u(x)=(T,\mp 1)$}\,.
\end{cases}
\end{equation}
Furthermore, we define $\bfeta \circ u \colon \Omega \to \R^n$ by setting $\bfeta \circ u(x)=\bfeta(T)$ for $u(x)=(T,\varepsilon)$, and we say that $u \in W^{1,2}(\Omega,\mathscr{A}_Q(\R^n))$ if and only if $\bfeta \circ u \in W^{1,2}(\Omega,\R^n)$ and $u^\pm \ominus \bfeta \circ u \in W^{1,2}(\Omega,\mathcal{A}_Q(\R^n))$. The Dirichlet energy is then the functional on the Sobolev space $W^{1,2}(\Omega, \mathscr{A}_Q(\R^n))$ defined by
\[
\Dir(u,\Omega) := \Dir (u^+ \ominus \bfeta \circ u, \Omega) + \Dir (u^- \ominus \bfeta \circ u, \Omega) + Q \Dir (\bfeta \circ u,\Omega)
\]

Next, we denote by $\{\Omega^+,\Omega^-,\Omega_0\}$ the canonical decomposition of $\Omega$ induced by $u$, that is
\begin{equation} \label{d:canonical}
\Omega^{\pm} := \left\lbrace x\in\Omega \, \colon \, \abs{u^{\pm}(x) \ominus \bfeta\circ u(x)} > 0 \right\rbrace\,, \qquad \Omega_0 := \Omega \setminus \left( \Omega^+ \cup \Omega^- \right)\,.
\end{equation}
Given a map $u \colon \Omega \to \mathscr{A}_Q(\R^n)$ minimizing the Dirichlet energy on $\Omega \subset \R^m$, a point $x \in \Omega$ is regular for $u$ if it is either $u(y)=(u^+(y),1)$ or $u(y)=(u^-(y),-1)$ for all $y$ in a neighborhood of $x$ and if, moreover, $x$ is regular for the corresponding classical $\Dir$-minimizing map $u^+$ or $u^-$. When the target dimension is $n=1$, \cite{DLHMS_linear} shows that, if $\Omega$ is connected, then either $\Omega$ coincides with $\Om_0$ (in which case $u$ coincides with $Q$ copies of a single classical harmonic function), or $\Om_0$ has Hausdorff dimension at most $m-1$ and coincides with the singular set of $u$ (namely, the complement in $\Omega$ of the set of regular points), which from now on we will denote by $\sing (u)$. In fact it can be seen that, under the latter alternative, $\Om_0$ must necessarily have dimension \emph{equal} to $m-1$, otherwise it is empty and the whole domain $\Omega$ coincides either with $\Om^+$ or with $\Om^-$. Here we achieve a complete understanding of the asymptotic behavior of $u$ at the $(m-1)$-dimensional stratum of $\sing (u)$. More precisely, we will prove the following theorem.

\begin{theorem}\label{thm:main}
Let $\Omega \subset \mathbb R^m$ be an open domain and $u\in W^{1,2} (\Omega, \Iqspecl)$ a $\Dir$-minimizing function. Then the following holds.
\begin{itemize}
\item[(a)] $u$ is locally Lipschitz, and in fact we have the quantitative estimate
\begin{equation}\label{e:Lip-est}
\|Du\|_{L^\infty (B_r (x))} \leq C r^{-\sfrac{m}{2}} \|Du\|_{L^2 (B_{2r} (x))} \qquad \forall B_{2r} (x) \subset \Omega\, ,
\end{equation}
where $C$ is a dimensional constant.
\item[(b)] $\sing (u)$ can be decomposed as $\sing_{m-1} (u) \cup \Sigma (u)$, where $\Sigma (u)$ is a closed set with Hausdorff dimension at most $m-2$ and
$\sing_{m-1} (u)$ is an $(m-1)$-dimensional submanifold of $\Omega$ of class $C^{1,\alpha}$, where $\alpha=\alpha(m,Q)$; moreover $\Sigma (u)$ is a discrete set if $m=2$.
\item[(c)] Let $x\in \sing_{m-1} (u)$ and $B_r (x)$ be so that $\sing_{m-1} (u)\cap B_r (x) = \sing (u) \cap B_r (x)$ and $B_r (x)\setminus \sing_{m-1} (u)$ consists of two connected components $E_+$ and $E_-$ diffeomorphic to a ball. Then
\begin{itemize}
\item[(c1)] $u|_{E_\pm} = \left(\sum_{i=1}^Q \a{u^\pm_i}, \pm 1\right)$ where $u^\pm_1\leq u^\pm_2 \leq \ldots \leq u^\pm_Q$ are harmonic functions which are of class $C^{1,\alpha}$ up to $\sing_{m-1} (u) \cap B_r (x)$;
\item[(c2)] If $u^\pm_j (y) = u^\pm_k (y)$ for some $y\in E_\pm$, then $u^\pm_j \equiv u^\pm_k$ on $E_\pm$;
\item[(c3)] On $\sing (u) \cap B_r(x)$ the following conditions hold:
\begin{eqnarray}
u^+_1 = \ldots = u^+_Q&=&u^-_1 = \ldots = u^-_Q\,,  \label{e:Dirichlet_condition} \\
\sum_{i=1}^Q |\nabla u^+_i|^2 &=& \sum_{i=1}^Q |\nabla u^-_i|^2\label{e:transmission_condition}\, .
\end{eqnarray} 
\end{itemize}
\end{itemize}
\end{theorem}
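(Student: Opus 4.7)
The plan is to follow an Almgren-style frequency analysis, adapted to the special $Q$-valued setting, and exploit the rigidity afforded by target dimension $n=1$. For the Lipschitz estimate \eqref{e:Lip-est}, I would argue as follows. The outer-variation Euler--Lagrange equation for the $\Dir$-minimization forces $\bfeta \circ u$ to be a classical harmonic function on all of $\Omega$, hence smooth with the standard gradient bound. On the open sets $\Omega^\pm$ the maps $u^\pm \ominus \bfeta \circ u$ are classical $\Dir$-minimizing into $\mathcal{A}_Q(\R)$, so the known Lipschitz theory for classical $Q$-valued minimizers applies. Propagating the bound across $\Omega_0=\sing(u)$ is then accomplished by a comparison with the harmonic envelope together with a truncation/competitor construction exploiting that on $\Omega_0$ the map collapses to $Q\a{\bfeta \circ u}$.

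For (b), I would set up Almgren's frequency function $I_{x,u}(r)$ and verify its monotonicity for $\Dir$-minimizing special $Q$-valued maps via standard inner and outer variations (essentially unchanged from the classical case). The crucial input is Theorem \ref{thm:main3}, which relates $\alpha$-homogeneous special $Q$-valued $\Dir$-minimizers to classical homogeneous harmonic polynomials and forces $\alpha$ to be a positive integer. Combined with blow-up arguments and upper semicontinuity of $I_{x,u}(0)$, a Federer-type stratification then shows that the stratum of points with frequency $\geq 2$ has Hausdorff dimension at most $m-2$, and is discrete when $m=2$; the complementary stratum of frequency-$1$ singular points is exactly $\sing_{m-1}(u)$.

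The $C^{1,\alpha}$ regularity of $\sing_{m-1}(u)$ is then obtained by upgrading the qualitative uniqueness of the tangent function to a rate. At a frequency-$1$ point $x_0$, Theorem \ref{thm:main3} characterizes the homogeneous tangent function as a linear model supported on the two half-spaces cut by some hyperplane $\{x \cdot \nu(x_0)=0\}$. The natural device to control the modulus of convergence of $\nu(x_0)$ is an epiperimetric inequality at this discrete family of frequency-$1$ linear models, yielding a power-law decay of a Weiss-type energy and hence H\"older continuity of the assignment $x_0\mapsto \nu(x_0)$. Once $\sing_{m-1}(u)\cap B_r(x)$ is known to be a $C^{1,\alpha}$ hypersurface, the claims in (c) essentially read off the structure: on each component $E_\pm$ the map $u^\pm$ decomposes, by ordering the sheets as $u_1^\pm \leq \cdots \leq u_Q^\pm$ (possible because $n=1$), into $Q$ harmonic functions, with the coincidence-implies-identity property (c2) following from the strong maximum principle. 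The condition \eqref{e:Dirichlet_condition} is forced by continuity of $u$ up to the interface (on which $u^\pm = Q\a{\bfeta \circ u}$), while \eqref{e:transmission_condition} is the Euler--Lagrange equation obtained from domain variations that move the free boundary $\sing_{m-1}(u)$.

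The main obstacle, in my view, is establishing the epiperimetric inequality (or an equivalent quantitative decay of a Weiss-type quantity) at frequency-$1$ singular points, needed to upgrade the qualitative uniqueness of tangents to H\"older convergence. This is the genuinely new analytic ingredient, since the classical $\mathcal{A}_Q$ theory by itself does not produce such rates at the $(m-1)$-dimensional interface created by the special structure. The reduction to a discrete family of linear model cones---enabled by the integer-frequency classification (Theorem \ref{thm:main3}) and the sheet-ordering afforded by $n=1$---is what makes this epiperimetric approach feasible.
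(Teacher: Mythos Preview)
Your strategy for (b) and (c) is essentially the paper's: frequency monotonicity, the integer-frequency classification (Theorem~\ref{thm:main3}) to isolate $\sing_{m-1}(u)=\{I_x(0)=1\}$, a Federer reduction for $\Sigma(u)$, and an epiperimetric inequality at the linear models to obtain a Weiss-type decay and hence $C^{1,\alpha}$ regularity of the free boundary. You correctly identify the epiperimetric inequality as the genuinely new analytic ingredient, and your derivation of (c1)--(c3) (sheet ordering and the maximum principle for (c1)--(c2), continuity for \eqref{e:Dirichlet_condition}, inner variations for \eqref{e:transmission_condition}) matches the paper's.

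Your approach to (a), however, diverges from the paper and has a gap. The classical $\mathcal{A}_Q$ Lipschitz theory on $\Omega^\pm$ only gives estimates on balls $B_{2r}(x)\subset\Omega^\pm$, with constants that a priori degenerate as $x$ approaches $\sing(u)$; your proposed ``comparison with the harmonic envelope together with a truncation/competitor construction'' is not a recognizable mechanism that controls this degeneration, and you do not indicate what competitor would produce a scale-invariant energy bound at singular points. The paper instead derives Lipschitz regularity \emph{from the frequency lower bound}: once one knows $I_x(0)\geq 1$ at every $x\in\sing(u)$ (a consequence of the integer classification), monotonicity of the frequency yields
\[
\int_{B_\sigma(x)}|Du|^2 \leq \Big(\frac{\sigma}{s}\Big)^m \int_{B_s(x)}|Du|^2
\]
at such $x$, and a standard interpolation with the classical harmonic estimates at regular points gives the global Morrey decay and hence \eqref{e:Lip-est}. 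In short, the Lipschitz bound is not independent of the frequency analysis---it is one of its byproducts, and part (a) is logically downstream of the classification you invoke for (b).
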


\begin{remark}
A standard application of the result in \cite{DeSa} implies that $\sing_{m-1}(u)$ is actually a $C^\infty$ submanifold.
\end{remark}

\begin{remark}[Connection to Free-Boundary problems] \label{rmk FB} 
Conclusion (c3) of Theorem \ref{thm:main} suggests a possible interpretation of our problem as a Free-Boundary problem: the Free-Boundary is given by the set $\sing_{m-1}(u)$, where the harmonic sheets of $u$ satisfy both the Dirichlet condition \eqref{e:Dirichlet_condition} and the transmission condition \eqref{e:transmission_condition}. Indeed, two closely related problems that motivated our study are segregation problems as studied by Caffarelli-Lin (see \cite{CaLi}) and the so called Double-Phase problem as studied by Alt-Caffarelli-Friedman (see \cite{AlCaFr}) and De Silva-Ferrari-Salsa (see \cite{DeFeSa}). It is interesting to notice, however, that in the above works the proof of the analogous result as Theorem \ref{thm:main} follows a viscosity approach (see \cite{CaLi, DeFeSa, DeSpVe}), while our proof is variational and based on an epiperimetric inequality approach. Similar ideas have been used in the treatment of the Double-Phase problem in dimension two \cite{SpVe}. However, the technique does not extend to higher dimensions due to the presence of cuspidal/branching points; the main result of the present paper is, instead, valid in any dimension, since such points are not present.
\end{remark}

\subsection{Strategy of the proof}

From \cite{DLHMS_linear} it is already known that $u$ is H\"older continuous. Without loss of generality, we make the following

\begin{ipotesi}\label{ass:main}
$\Omega$ is connected, $\Dir (u,\Omega) >0$, and $u$ has zero average everywhere, namely $\bfeta\circ u \equiv 0$. 
\end{ipotesi} 

\begin{remark}\label{r:sheeting} As anticipated, from the theory developed in \cite{DLHMS_linear} we know that $\sing (u)$ has Hausdorff dimension at most $m-1$ and, under Assumption \ref{ass:main}, it can be described as
\begin{equation} 
\sing (u) = \{u = (Q \a{0} ,1) = (Q \a{0}, -1)\}\, 
\end{equation}
(and since in $\Iqspecl$ the points $(Q \a{P}, 1)$ and $(Q\a{P}, -1)$ are identified, from now on we will use simply the notation $Q\a{P}$). Indeed if $u(x) \neq Q \a{0}$, we know by continuity (cf. \cite[Section 8]{DLHMS_linear}) that there is a neighborhood $U$ of $x$ which is either contained in $\Omega^+$ or in $\Omega^-$. In particular $u|_U$ is given by $(w, 1)$ or $(w, -1)$ for some $\Dir$-minimizing function $w$ taking values in $\mathcal{A}_Q (\R)$; see \cite{DLS_Qvfr}. The latter has no singularities and consists of $\sum_{i=1}^Q \a{w_i}$ for classical harmonic functions $w_i$ satisfying:
\begin{itemize}
\item $w_1\leq w_2\leq \ldots \leq w_Q$;
\item either $w_i<w_{i+1}$ or $w_i\equiv w_{i+1}$. 
\end{itemize}
\end{remark}

Both in the theory developed in \cite{DLHMS_linear} and towards a proof of Theorem \ref{thm:main}, a pivotal role is played by the \emph{frequency function} of $u$. For any $x \in \Omega$, the frequency function of $u$ at $x$ is defined by
\[
r \in \left( 0, \dist (x, \partial \Omega) \right) \mapsto I_{x,u}(r) := \frac{r D_{x,u}(r)}{H_{x,u}(r)}\,,
\]
where
\[
D_{x,u}(r) := \Dir(u,B_r(x)) \quad \mbox{and} \quad H_{x,u}(r) := \int_{\partial B_r (x)} \G(u(y),Q\a{0})^2 \, d\Ha^{m-1}(y)\,.
\]

For the sake of simplicity, we will drop the subscript $_u$ whenever this does not give rise to ambiguity. The crucial properties of the frequency functions, proved in \cite[Theorem 9.2]{DLHMS_linear} and valid for any target $\R^n$, can be summarized as follows:
\begin{itemize}
\item[(i)] $r \mapsto I_{x}(r)$ is non-decreasing, so that, in particular, the limit $\lim_{r\to 0^+} I_{x}(r) =: I_{x}(0)$ exists and it is finite;
\item[(ii)] Setting, for brevity, $I_x=I_{x}(0)$, one has that $I_x=0$ if and only if 
\[
\max\{|u^+(x)|, |u^-(x)|\}>0\, ;
\]
\item[(iii)] If $u(x) = Q \a{0}$ (and thus, by (ii), $I_x \ne 0$), then $I_x \ge c_0$ for some constant $c_0$ depending only on $m,n$, and $Q$;
\item[(iv)] $I_{x}(\rho) \equiv I_x$ for $\rho \in \left(0,r\right)$ if and only if $\left.u\right|_{B_r(x)}$ is positively homogeneous of degree $I_x$. 
\end{itemize}

Theorem \ref{thm:main} will follow from the following two results, where we achieve a classification of the possible values of $I_{x,u}(0)$ when the target of $u$ is $\R$.

\begin{theorem}\label{thm:main2}
Assume $u$ is as in Theorem \ref{thm:main} and satisfies Assumption \ref{ass:main}. Then, for every $x\in \sing (u)$ the value of the frequency function $I_x (0)$ is a positive integer, and $\sing_{m-1} (u) = \{x\in \sing (u): I_x (0)=1\}$. 
\end{theorem}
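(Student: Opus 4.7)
The plan is to combine a blow-up analysis at each singular point with the classification of homogeneous $\Dir$-minimizing special $Q$-valued functions supplied by Theorem \ref{thm:main3}, and then a rigidity argument at points of frequency one.

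First, I would fix $x\in \sing(u)$: under Assumption \ref{ass:main}, $u(x)=Q\a{0}$ and $I_x(0)\geq c_0>0$ by property (iii) of the frequency listed above. Following the standard blow-up procedure from \cite{DLHMS_linear}, one rescales $u$ around $x$ by dividing $u(x+r\,\cdot\,)$ by the factor $(r^{1-m}H_{x,u}(r))^{1/2}$, so that the resulting maps $u_{x,r}\colon B_1\to \mathscr{A}_Q(\R)$ satisfy $\int_{\partial B_1}\G(u_{x,r},Q\a{0})^2\,d\Ha^{m-1}=1$. The frequency monotonicity, the doubling-type estimates on $H_{x,u}$, and the compactness theorem for sequences of $\Dir$-minimizing special $Q$-valued functions with equibounded energy then yield, along any $r_k\downarrow 0$, a subsequence converging strongly in $W^{1,2}_{loc}$ to a nontrivial $\Dir$-minimizing limit $\bar N$. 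By construction $\bar N$ satisfies $\bfeta\circ \bar N\equiv 0$ and $\bar N(0)=Q\a{0}$, and by property (iv) of the frequency it is positively $I_x(0)$-homogeneous. Invoking Theorem \ref{thm:main3} gives that its degree of homogeneity is a positive integer, so $I_x(0)\in \mathbb{Z}_{>0}$ for every $x\in \sing(u)$.

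Next, to establish $\sing_{m-1}(u)=\{x\in \sing(u)\colon I_x(0)=1\}$, I would argue by two inclusions. For $\sing_{m-1}(u)\subset \{I_x(0)=1\}$: at $x\in \sing_{m-1}(u)$, $\sing(u)$ is a $C^{1,\alpha}$ $(m-1)$-submanifold in a neighborhood of $x$, so any blow-up limit $\bar N$ must have $\sing(\bar N)$ equal to the tangent hyperplane $V$. Combined with the $I_x(0)$-homogeneity, this forces $\bar N$ to be invariant under translations along $V$, hence to depend on a single variable orthogonal to $V$; in dimension one a nontrivial $\alpha$-homogeneous $\Dir$-minimizer must be piecewise affine, and so $\alpha=1$. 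For the converse inclusion, at a point with $I_x(0)=1$ Theorem \ref{thm:main3} identifies any nontrivial tangent $\bar N$ with $1$-homogeneous linear sheets meeting along a single hyperplane. Exploiting this flat tangent, one runs an epiperimetric-type argument, in the variational spirit alluded to in Section \ref{s:linear_thms} and in \cite{SpVe}, to obtain a power-rate decay of the frequency and uniqueness of the tangent map; this decay in turn yields local $C^{1,\alpha}$ regularity of $\sing(u)$ near $x$, so that $x\in \sing_{m-1}(u)$.

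The main obstacle is the classification statement Theorem \ref{thm:main3} itself, i.e.\ the identification of homogeneous $\Dir$-minimizing special $Q$-valued functions with target $\R$ with homogeneous classical harmonic polynomials of integer degree. The subtle point is that the $Q$ ordered sheets composing a homogeneous minimizer $\bar N$ are, a priori, harmonic only on the (cone-shaped) connected components of $\{\bar N\neq Q\a{0}\}$, and are linked across the $(m-1)$-dimensional part of the singular set only through the compatibility (Dirichlet and transmission) conditions forced by $\Dir$-minimality. Promoting these local pieces to globally defined harmonic polynomials on $\R^m$, and thereby forcing the degree of homogeneity to be an integer, is where the heart of the analytic work lies; an epiperimetric inequality is naturally expected to play a central role there as well.
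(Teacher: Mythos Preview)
Your argument for the first assertion --- blow up at $x\in\sing(u)$, obtain a nontrivial $I_x(0)$-homogeneous $\Dir$-minimizer, and invoke Theorem~\ref{thm:main3} to conclude $I_x(0)\in\mathbb{Z}_{>0}$ --- is correct and is exactly what the paper does (see the first lines of the proof of Corollary~\ref{c:reg2}).

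For the second assertion your two-inclusion strategy has two problems. First, a logical one: in the paper the set $\sing_{m-1}(u)$ is \emph{defined} to be $\{x\in\sing(u): I_{x}(0)=1\}$ (see the proof of Corollary~\ref{c:reg2}), so the equality in Theorem~\ref{thm:main2} is a definition, not a theorem to be proved by double inclusion. The substantive content, namely that this set is a $C^{1,\alpha}$ $(m-1)$-submanifold and its complement in $\sing(u)$ has dimension $\le m-2$, belongs to Theorem~\ref{thm:main}(b) and is established via the epiperimetric inequality and Proposition~\ref{p:decay}/Corollary~\ref{c:reg} on one side, and a Federer-type reduction on the other. By presupposing the $C^{1,\alpha}$ structure of $\sing_{m-1}(u)$ you are assuming part of what the inductive package (Proposition~\ref{p:inductive}) is meant to deliver.

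Second, and more concretely, your first inclusion contains a genuine gap. From ``$\bar N$ is $\alpha$-homogeneous and $\sing(\bar N)=V$'' you conclude that $\bar N$ is invariant under translations along $V$; this implication is false in general. Translation invariance along a direction $q$ follows (via the cone-splitting argument of \cite[Section~3.6]{DLS_Qvfr}, \cite[Section~10]{DLHMS_linear}) only once you know the frequency of $\bar N$ at $q$ equals the frequency at $0$, and you have no mechanism to propagate the value $\alpha$ from $x$ to nearby singular points of $u$ --- upper semicontinuity only gives $\le\alpha$. A salvage is possible using Theorem~\ref{thm:main3} directly: the zero set of a homogeneous harmonic polynomial of degree $\alpha\ge 2$ can never be exactly a single hyperplane (since $x_m^\alpha$ is not harmonic for $\alpha\ge2$), which would force $\alpha=1$; but this is not the argument you wrote, and it still requires justifying $\sing(\bar N)=V$ rather than merely $\sing(\bar N)\supset V$. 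The paper sidesteps all of this by taking the equality as a definition.
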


\begin{theorem}\label{thm:main3}
If $u\in W^{1,2}_{{\rm loc}} (\R^m, \Iqspecl)$ is locally $\Dir$-minimizing, satisfies Assumption \ref{ass:main} and is $\alpha$-homogeneous, then
\begin{itemize}
\item[(a)] $\alpha= I_0 (0)$ is a positive integer;
\item[(b)] denoting by $u^+$ and $u^-$ the positive and negative part of $u$ (see \eqref{d:pm}), $|u^+|-|u^-|$ is a harmonic polynomial $p$ of degree $\alpha$, and $\sing (u)$ is its zero set;
\item[(c)] if we denote by $\Om^\pm_j$ the connected components of $\{\pm p > 0\}$, then there are vectors $A^\pm_j = (A^\pm_{j,1}, \ldots , A^\pm_{j,Q})\in \mathbb R^Q$ such that
\begin{equation}
u (x) = \left\{
\begin{array}{ll}
\left(\sum_i \a{A^\pm_{j,i} \, p (x)}, \pm 1\right) \qquad &\mbox{for every $x\in \Om^\pm_j$}\\ \\
Q\a{0} \qquad &\mbox{for every $x\in \{p=0\}$}\,.
\end{array}\right.
 \end{equation}  
\end{itemize}
\end{theorem}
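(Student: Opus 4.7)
The plan is to exhibit the harmonic polynomial $p$ of the statement concretely as $|u^+| - |u^-|$, suitably extended by zero across $\sing(u) = \Omega_0$, and from this to deduce all three conclusions simultaneously. First I would study the structure of $u$ on each connected component $\Omega^\pm_j$ of the canonical decomposition \eqref{d:canonical}. By $\alpha$-homogeneity each $\Omega^\pm_j$ is an open cone with boundary contained in $\sing(u)$, and Remark \ref{r:sheeting} identifies $u|_{\Omega^+_j}$ with an ordered $Q$-tuple $\bigl(\sum_i \a{w^+_i}, +1\bigr)$ of harmonic sheets $w^+_1 \leq \ldots \leq w^+_Q$ with $\sum_i w^+_i \equiv 0$, each $\alpha$-homogeneous and vanishing continuously on $\partial \Omega^+_j$. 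Writing $w^+_i(r\omega) = r^\alpha f^+_i(\omega)$, every $f^+_i$ is a Dirichlet eigenfunction of $-\Delta_{\Sf^{m-1}}$ on the open connected set $\Sigma^+_j := \Omega^+_j \cap \Sf^{m-1}$ with common eigenvalue $\alpha(\alpha+m-2)$.

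The crucial analytic step will be to show that all such eigenfunctions lie in the one-dimensional principal eigenspace of $\Sigma^+_j$. The zero-average constraint rules out the possibility that all $w^+_i$ coincide (else each would vanish, contradicting $\Omega^+_j \subset \Omega^+$), so for some index $i$ one has $w^+_i < w^+_{i+1}$ everywhere on $\Omega^+_j$. Then $f^+_{i+1}-f^+_i$ is a strictly positive Dirichlet eigenfunction on the connected domain $\Sigma^+_j$, and by the simplicity of the principal Dirichlet eigenvalue on a connected domain this forces both $\alpha(\alpha+m-2) = \lambda_1(\Sigma^+_j)$ and $f^+_i = A^+_{j,i} \phi^+_j$ for every $i$, with $\phi^+_j>0$ the positive principal eigenfunction. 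Consequently $w^+_i = A^+_{j,i}\psi^+_j$ on $\Omega^+_j$ with $\psi^+_j(r\omega) := r^\alpha \phi^+_j(\omega)$ and $\sum_i A^+_{j,i}=0$; the identical argument gives $w^-_i = A^-_{k,i} \psi^-_k$ on each $\Omega^-_k$ with $\psi^-_k>0$ harmonic and $\alpha$-homogeneous.

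Next I would define $p := |u^+| - |u^-|$ on $\R^m$, extended by $0$ on $\sing(u)$. From the previous step $p = c^+_j\psi^+_j$ on $\Omega^+_j$ and $p = -c^-_k\psi^-_k$ on $\Omega^-_k$ with $c^\pm_\bullet := \bigl(\sum_i(A^\pm_{\bullet,i})^2\bigr)^{1/2}>0$, so $p$ is harmonic on $\Omega^+\cup\Omega^-$. Across the regular stratum $\sing_{m-1}(u)$ the Dirichlet condition \eqref{e:Dirichlet_condition} is automatic (all sheets vanish), and the transmission condition \eqref{e:transmission_condition} from Theorem \ref{thm:main}(c3)---itself a direct consequence of the Euler-Lagrange equation for inner variations---reads $(c^+_j)^2 |\nabla \psi^+_j|^2 = (c^-_k)^2 |\nabla \psi^-_k|^2$ on the common interface. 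Since $\psi^\pm_\bullet$ vanish on the singular set their gradients there are purely normal, so this identity is exactly the matching of one-sided normal derivatives of $p$. Combined with the continuity $p|_{\sing(u)}=0$, it makes $p$ of class $C^1$ across $\sing_{m-1}(u)$ and hence harmonic on $\R^m \setminus \Sigma(u)$. Since $\Sigma(u)$ has Hausdorff dimension at most $m-2$ by Theorem \ref{thm:main}(b) and $p$ is Lipschitz by Theorem \ref{thm:main}(a), the standard removability theorem for harmonic functions upgrades $p$ to a harmonic function on all of $\R^m$.

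To conclude: an $\alpha$-homogeneous harmonic function on $\R^m$ is a homogeneous harmonic polynomial, so $\alpha \in \N$, and $\alpha\geq 1$ follows from $\Dir(u)>0$, giving (a). The equalities $\{p>0\}=\Omega^+$, $\{p<0\}=\Omega^-$ and $\{p=0\}=\sing(u)$ are built into the construction, yielding (b), and the identity $\psi^\pm_j = \pm p/c^\pm_j$ combined with $w^\pm_i = A^\pm_{j,i}\psi^\pm_j$ gives (c) after absorbing $\pm(c^\pm_j)^{-1}$ into the constants $A^\pm_{j,i}$. The hardest step will be the proportionality of sheets on each phase --- specifically, reducing from an arbitrary Dirichlet eigenfunction at eigenvalue $\alpha(\alpha+m-2)$ to the principal one; my argument hinges on the ordering constraint producing a positive eigenfunction, which is the cleanest exploitation of the special $Q$-valued structure that distinguishes this problem from the purely classical $Q$-valued setting. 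A secondary obstacle will be ensuring that the transmission condition at regular singular points is available independently of Theorem \ref{thm:main}; a self-contained proof can be obtained by a local inner-variations argument that does not rely on the full regularity theorem for $\sing(u)$.
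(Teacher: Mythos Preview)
Your proposal is correct and follows the same overall strategy as the paper: show that on each connected component of $\Omega^\pm$ all sheets are multiples of a single positive first Dirichlet eigenfunction, patch these into the scalar function $p=|u^+|-|u^-|$, make $p$ harmonic across $\sing_{m-1}(u)$ via the transmission condition, and remove $\Sigma(u)$ using Lipschitz regularity and its small Hausdorff dimension. Your route to the proportionality of sheets---producing a strictly positive eigenfunction as a consecutive difference $w^+_{i+1}-w^+_i$---is slightly cleaner than the paper's, which first treats the extremal sheets $u_1<0$ and $u_Q>0$ and then disposes of the intermediate ones by a separate contradiction argument; both reduce to the same spectral fact about the simplicity of the principal Dirichlet eigenvalue. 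On the logical point you flag at the end: the paper does \emph{not} avoid Theorem~\ref{thm:main} by an independent inner-variations argument, but rather invokes its full conclusions (Lipschitz regularity, the stratification of $\sing(u)$, and the transmission condition) for the homogeneous $u$ at hand, made available through the inductive scheme of Proposition~\ref{p:inductive} and Corollary~\ref{c:reg2}, i.e.\ from Theorem~\ref{thm:main3} in one lower dimension.
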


\begin{remark}
Regarding the conclusions of the previous theorem, observe that $\bfeta\circ u \equiv 0$ is equivalent to $\sum_i A^\pm_{j,i}=0$ for all $j$, and that the transmission condition \eqref{e:transmission_condition} is equivalent to:
\begin{itemize}
\item[(T)] if $\overline{\Om_j^+} \cap \overline{\Om_k^-} \cap \sing_{m-1} (u) \neq \emptyset$, then $|A^+_j|=|A^-_k|$.
\end{itemize} 
\end{remark}

In order to pass from Theorem \ref{thm:main2} and Theorem \ref{thm:main3} to the main result Theorem \ref{thm:main}, the key step is a uniqueness of blow up with decay, see Proposition \ref{p:decay}, whose proof is achieved through an epiperimetric inequality approach, see Lemma \ref{l:epiperimetric}. Notice that the statement of Theorem \ref{thm:linear-what-is-needed} is contained in the statements of Theorems \ref{thm:main}, \ref{thm:main2}, and \ref{thm:main3}.

\section{Inductive procedure and base step}\label{ss:base}

We will prove the three theorems by induction over the dimension $m$. 

\subsection{Base step $m=1$} First of all, we recall the following elementary facts (see \cite{DLHMS_linear}): 
\begin{itemize}
\item[(i)] $\sing (u)$ consists of isolated points for any $\Dir$-minimizer $u$;
\item[(ii)] if $v$ is an homogeneous $\Dir$-minimizer with $\bfeta\circ v \equiv 0$, then there are $2Q$ real constants $a_1, \ldots , a_Q , b_1 , \ldots , b_Q$ such that 
\[
v (x) = \left\{\begin{array}{ll}
\left(\sum_i \a{a_i x}, 1\right) \qquad &\mbox{for $x\geq 0$}\\ \\
\left(\sum_i \a{b_i x}, -1\right) &\mbox{for $x\leq 0$}
\end{array}\right.\, ,
\]
up to a choice of orientation of the real line.
\end{itemize}
It follows immediately from $\bfeta\circ v = 0$ that $\sum_i a_i = \sum_i b_i =0$. Note that, if we assume that $u$ is not identically equal to $Q\a{0}$, then $\{|u|\neq 0\}$ consists precisely of the two connected components $\Om^+ := (0, \infty)$ and $\Om^- = (-\infty, 0)$. So, if we define the vectors $A^+_1 := (a_1, \ldots , a_Q)$ and $A^-_1 := (b_1, \ldots, b_Q)$, in order to achieve the conclusion of Theorem \ref{thm:main3} we just need to show that $|A^+_1|^2 = |A^-_1|^2$. Observe that this follows immediately from the inner variation \cite[Proposition 7.1]{DLHMS_linear}, which in this specific case simplifies to
\[
\int |Dv|^2\, \varphi' \, dx = 0 \qquad \mbox{for every $\varphi \in C^1_c(\R)$}\,,
\]
thus showing that $|Dv|^2$ must be constant, and so Theorem \ref{thm:main3} holds for $m=1$. 

Now, for a general (not necessarily homogeneous) minimizer $u$ with $\bfeta \circ u =0$ and positive Dirichlet energy, consider, without loss of generality, the case in which its domain is an open interval $I$. Recalling the definition of the sets $\Om^+$ and $\Om^-$, by the discussion above we conclude that:
\begin{itemize}
\item[(a)] $\Om^+$ and $\Om^-$ are both the union of a collection of intervals $I_j^+$ and $I_j^-$.
\item[(b)] If an endpoint $\gamma$ of $I_j^+$ (respectively $I_j^-$) is not an endpoint of $I$, then it must be common to some $I_k^-$ (resp. $I_k^+$).
\item[(c)] On each interval $I_j^+$ (resp. $I_j^-$) $u$ takes respectively the form $(\sum_i \a{a_{j,i} x + c_{j,i}}, 1)$ (resp. $(\sum_i \a{b_{j,i} x + d_{j,i}}, -1)$) for constants $a_{j,i}, c_{j,i}$ (resp. $b_{j,i}, d_{j,i}$) with the properties 
\[
\sum_i a_{j,i} = \sum_i c_{j,i} = 0 \qquad \qquad \left(\mbox{resp.}\quad \sum_i b_{j,i} = \sum_i d_{j,i} = 0 \right)\, \qquad \forall\,j .
\] 
Any two of the linear functions cannot cross in $I_j^+$ (resp. $I_j^-$), whereas they all must vanish at an endpoint of the interval if the latter is in the interior of $I$. We then conclude that either $\sing (u) = \emptyset$ (and $I = \Om^+$, or $I = \Om^-$) or  $\sing (u)$ consists of precisely one point. In the latter case the map $u$ is just the translation of a $1$-homogeneous $\Dir$-minimizer. In particular this shows that all the conclusions of Theorem \ref{thm:main} and Theorem \ref{thm:main2} hold. 
\end{itemize}

\subsection{Inductive statement} In the remaining sections of this first part we therefore focus on proving the following inductive statement.

\begin{proposition}\label{p:inductive}
Assume Theorem \ref{thm:main3} hold for $m$. Then Theorems \ref{thm:main}, \ref{thm:main2} hold for $m$ and Theorem \ref{thm:main3} hold for $m+1$.  
\end{proposition}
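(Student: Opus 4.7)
The plan is to exploit the inductive hypothesis--that Theorem \ref{thm:main3} holds in dimension $m$--in three consecutive implications: first derive Theorem \ref{thm:main2} in dimension $m$; then, with both classifications in hand, derive Theorem \ref{thm:main} in dimension $m$ via an epiperimetric inequality; finally, use all three $m$-dimensional results to establish Theorem \ref{thm:main3} in dimension $m+1$.

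For Theorem \ref{thm:main2} in dimension $m$, at any $x \in \sing(u)$ the monotonicity of the frequency produces tangent functions which are $I_x(0)$-homogeneous $\Dir$-minimizers on $\R^m$ with zero average, so Theorem \ref{thm:main3} immediately gives $I_x(0) \in \N$ together with the explicit identification $\sing(v) = \{p = 0\}$ for a harmonic polynomial $p$ of degree $I_x(0)$. When $I_x(0) = 1$ this zero set is a hyperplane and the tangent is invariant under $m-1$ independent translations, placing $x \in \sing_{m-1}(u)$; when $I_x(0) \geq 2$ the tangent is invariant in at most $m-2$ directions (an $\alpha$-homogeneous harmonic polynomial of degree $\alpha \geq 2$ must depend on at least two variables), so by Almgren's stratification theorem the set $\{x \in \sing(u) \colon I_x(0) \geq 2\}$ has Hausdorff dimension $\leq m-2$. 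This yields the analytic characterization $\sing_{m-1}(u) = \{I_x(0) = 1\}$ and the dimension bound on $\Sigma(u)$.

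For Theorem \ref{thm:main} in dimension $m$, the Lipschitz estimate (a) follows by scaling from the H\"older bound of \cite{DLHMS_linear} together with the harmonicity of the sheets on $\Omega \setminus \sing(u)$. The core of the argument is at points of $\sing_{m-1}(u)$: the explicit form of frequency-one blow-ups furnished by Theorem \ref{thm:main3} is the essential input for an epiperimetric inequality (Lemma \ref{l:epiperimetric}), whose standard variational consequences upgrade it to uniqueness of the tangent with an algebraic decay rate (Proposition \ref{p:decay}). This in turn yields the $C^{1,\alpha}$ regularity of $\sing_{m-1}(u)$, the $C^{1,\alpha}$ extension of each harmonic sheet $u^\pm_i$ up to this free boundary, and the transmission condition \eqref{e:transmission_condition} obtained by passing to the limit in the analogous property of the unique tangent. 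The refinement to $\Sigma(u)$ being closed, and discrete when $m = 2$, then follows from Federer's standard dimension reduction argument.

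Finally, for Theorem \ref{thm:main3} in dimension $m+1$, let $u$ be $\alpha$-homogeneous on $\R^{m+1}$ with $\bfeta \circ u \equiv 0$, so that $\sing(u)$ is a closed cone. At any $x_0 \in \sing(u) \setminus \{0\}$ the $\alpha$-homogeneity of $u$ forces every tangent function to be translation-invariant along $\R x_0$, and hence to descend to a $\Dir$-minimizer on $x_0^\perp \cong \R^m$; Theorems \ref{thm:main} and \ref{thm:main2} in dimension $m$ then apply slicewise, giving that $\sing(u) \setminus \{0\}$ is an $m$-dimensional $C^{1,\alpha}$ submanifold of $\R^{m+1}$ outside a conical set $\Sigma$ of dimension $\leq m - 1$, and that on each connected component $\Omega^\pm_j$ of $\Omega^\pm$ one has $u^\pm_i = A^\pm_{j,i} q^\pm_j$ for constants $A^\pm_{j,i}$ and a positive $\alpha$-homogeneous harmonic function $q^\pm_j$ on $\Omega^\pm_j$ vanishing on $\partial \Omega^\pm_j$. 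Setting $p := q^+_j$ on $\Omega^+_j$, $p := -q^-_j$ on $\Omega^-_j$, and $p := 0$ on $\sing(u)$, the transmission condition \eqref{e:transmission_condition} forces $|A^+_j| = |A^-_k|$ across any shared interface in $\sing_{m-1}(u)$, hence the normal derivatives of $p$ agree across it and $p$ is $C^1$ (and then harmonic) on the complement of the codimension-$\geq 2$ set $\Sigma$. Boundedness of $p$ on compacts gives a removable-singularity extension to a globally harmonic function on $\R^{m+1}$, and $\alpha$-homogeneity then forces $p$ to be a homogeneous harmonic polynomial of integer degree $\alpha$, yielding Theorem \ref{thm:main3}(a)--(c). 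I expect the main obstacle to be precisely this gluing step: while the transmission condition matches magnitudes across each smooth piece of interface, obtaining a consistent global choice of signs for the local $q^\pm_j$ on all components requires a topological (no-monodromy) argument across the lower-dimensional stratum $\Sigma$, where multiple components may meet in a potentially intricate configuration.
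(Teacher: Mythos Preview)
Your overall architecture matches the paper's: reduce Theorem \ref{thm:main2} in dimension $m$ to the classification in Theorem \ref{thm:main3}; obtain Theorem \ref{thm:main} through the epiperimetric inequality (Lemma \ref{l:epiperimetric}) and Proposition \ref{p:decay}; then bootstrap to Theorem \ref{thm:main3} in dimension $m+1$. Two points, however, are genuine gaps.

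\textbf{The sheets on each component are proportional.} In the $(m+1)$-dimensional step you assert that on each connected component $\Omega^\pm_j$ one has $u^\pm_i = A^\pm_{j,i}\, q^\pm_j$ for a single positive $\alpha$-homogeneous harmonic function $q^\pm_j$. This is the crux of the argument and does not follow from Theorems \ref{thm:main} or \ref{thm:main2} ``slicewise''. Each $u_i$, restricted to a connected component $F$ of $\partial B_1 \setminus \sing(u)$, is a Dirichlet eigenfunction on $F$ with eigenvalue determined by $\alpha$; a priori that eigenvalue could have multiplicity and different $u_i$ could be linearly independent. The paper's mechanism is: since $\bfeta\circ u = 0$ and $F\cap\sing(u)=\emptyset$, the extremal sheets satisfy $u_1<0<u_Q$ on $F$, hence are sign-definite eigenfunctions; spectral theory then forces the eigenvalue to be the \emph{first} Dirichlet eigenvalue, whose eigenspace is one-dimensional. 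Only then are all $u_i$ multiples of a fixed $\varphi_F$. You should supply this argument.

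\textbf{The gluing of $p$ and the ``monodromy'' concern.} Your definition $p:=q^+_j$ on $\Omega^+_j$ and $p:=-q^-_j$ on $\Omega^-_j$ does not make the transmission condition \eqref{e:transmission_condition} equivalent to the $C^1$-matching of $p$: the transmission gives $|A^+_j|^2\,|\nabla q^+_j|^2 = |A^-_k|^2\,|\nabla q^-_k|^2$, which is not $|\nabla q^+_j|=|\nabla q^-_k|$ unless you also know $|A^+_j|=|A^-_k|$. The correct normalization, used in the paper, is to set $p:=\pm |A^\pm_j|\,q^\pm_j$ (equivalently $p=\pm|a(F)|\varphi_F$), so that $|\nabla p|^2 = |Du|^2$ identically on each component; the transmission condition then \emph{is} exactly the equality of normal derivatives across $\sing_m(u)$. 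With this definition the sign of $p$ on each component is fixed by the $\epsilon(F)\in\{+1,-1\}$ coming from the $\Iqspecl$-structure, so there is no monodromy issue: your stated ``main obstacle'' dissolves once the normalization is right. The remaining extension across the lower stratum $\Sigma$ then follows from $\mathcal{H}^m(\Sigma)=0$ together with the Lipschitz bound on $p$ (which in turn rests on Corollary \ref{c:Lipschitz}, using that $I_x(0)\geq 1$ at every singular point---your sketch of the Lipschitz estimate via ``H\"older plus harmonicity'' omits this ingredient).
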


\section{Lipschitz regularity of Dir-minimizers}\label{s:Lipschitz}

In this section we show that the Lipschitz regularity is a consequence of the classification of the possible values for the frequency function. First of all, observe that we can reduce the proof of \eqref{e:Lip-est} to the case where, additionally, $\etab\circ u =0$. Indeed, observe that in general $\etab \circ u$ satisfies the inequality
\[
\int Q\, |D(\etab \circ u)|^2 \leq \int |Du|^2 
\]
and it is a harmonic function, from which it readily follows that
\[
\|D(\etab\circ u)\|_{L^\infty (B_r(x))} \leq C r^{-m/2} \|D(\etab\circ u)\|_{L^2 (B_{2r}(x))}\, .
\]

\begin{corollary}\label{c:Lipschitz}
Assume $u\in W^{1,2} (\Omega, \Iqspecl)$ satisfies Assumption \ref{ass:main} and $I_{x} (0) \geq 1$ for every $x\in \sing (u)$. Then $u$ is locally Lipschitz. 
\end{corollary}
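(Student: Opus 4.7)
The plan is to prove the quantitative estimate \eqref{e:Lip-est} pointwise, splitting cases according to the distance of $y$ to $\sing(u)$. The reduction to $\etab\circ u\equiv 0$ carried out before the statement allows me to work under Assumption \ref{ass:main}, so that $|u|^2 := \G(u,Q\a{0})^2 = \sum_i u_i^2$ on any regular region where $u$ decomposes as $\sum_i\a{u_i}$.

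Fix $B_{2r}(x_0)\subset \Omega$, $y\in B_r(x_0)$, and set $\rho:=\dist(y,\sing(u))$. By Remark \ref{r:sheeting}, on the ball $B_\rho(y)\subset \Omega\setminus \sing(u)$ the map $u$ is a classical $Q$-valued harmonic function $\sum_i\a{u_i}$ with $u_1\leq \ldots\leq u_Q$. The classical interior Cauchy estimate applied separately to each harmonic sheet gives, for any constants $c_i$,
\[
|\nabla u_i(y)|^2 \leq \frac{C}{\rho^{m+2}}\int_{B_\rho(y)}(u_i(z)-c_i)^2\,dz\,.
\]
When $\rho\geq r/4$, I pick $c_i$ equal to the mean of $u_i$ on $B_\rho(y)$ and invoke Poincar\'e's inequality sheet by sheet to obtain $|Du(y)|^2 = \sum_i|\nabla u_i(y)|^2 \leq C\rho^{-m}\Dir(u,B_\rho(y)) \leq C r^{-m}\Dir(u,B_{2r}(x_0))$, which is the desired bound.

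The interesting case is $\rho < r/4$. Here I choose $c_i = 0$, which turns the estimate into
\[
|Du(y)|^2 \leq \frac{C}{\rho^{m+2}}\int_{B_\rho(y)}|u(z)|^2\,dz\,.
\]
Picking $x\in\sing(u)$ with $|y-x|=\rho$ we have $B_\rho(y)\subset B_{2\rho}(x)$, and Fubini in polar coordinates gives $\int_{B_{2\rho}(x)}|u|^2 = \int_0^{2\rho}H_x(s)\,ds$. This is where the frequency hypothesis enters. By the monotonicity of $I_x(\cdot)$ (property (i) in the list after Assumption \ref{ass:main}), the hypothesis $I_x(0)\geq 1$ propagates to $I_x(s)\geq 1$ for every $s\in(0,\dist(x,\partial\Omega))$; combined with the well-known identity
\[
\frac{d}{ds}\log\!\left(\frac{H_x(s)}{s^{m-1}}\right) = \frac{2I_x(s)}{s}\,,
\]
this shows that $s\mapsto H_x(s)/s^{m+1}$ is non-decreasing. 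Hence for any $R_0\in[2\rho,r]$ with $B_{R_0}(x)\subset B_{2r}(x_0)$ we have $H_x(s)\leq (s/R_0)^{m+1}H_x(R_0)$, so
\[
\int_0^{2\rho}H_x(s)\,ds \leq C\,\frac{\rho^{m+2}}{R_0^{m+1}}H_x(R_0)\,.
\]
Using once more $I_x(R_0)\geq 1$, the identity $H_x(R_0) = R_0 D_x(R_0)/I_x(R_0)$ yields $H_x(R_0)\leq R_0\Dir(u,B_{R_0}(x))$. Plugging everything back in,
\[
|Du(y)|^2 \leq \frac{C}{R_0^m}\Dir(u,B_{R_0}(x)) \leq \frac{C}{r^m}\Dir(u,B_{2r}(x_0))\,,
\]
which concludes the proof of \eqref{e:Lip-est}.

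The main conceptual point is identifying where the frequency hypothesis enters: $I_x(0)\geq 1$ upgrades the trivial Poincar\'e-type control $H_x(R_0)\lesssim R_0\Dir(u)$ into the quadratic decay $H_x(s)\lesssim s^{m+1}$ at small scales, which is exactly what absorbs the singular prefactor $\rho^{-m-2}$ coming from the interior Cauchy estimate on the regular set. Everything else is bookkeeping.
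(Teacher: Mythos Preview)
Your argument is correct and rests on the same key observation as the paper's proof: the hypothesis $I_x(0)\geq 1$, propagated by monotonicity to $I_x(s)\geq 1$, yields the monotonicity of $s\mapsto H_x(s)/s^{m+1}$ at every singular $x$ (equivalently, in the paper's formulation, the energy decay $\int_{B_\sigma(z)}|Du|^2\leq(\sigma/s)^m\int_{B_s(z)}|Du|^2$ at singular $z$). The paper then feeds this into a Morrey-type argument together with the same near/far case split you perform, whereas you bypass Morrey by applying the interior Cauchy estimate directly on the largest regular ball around $y$ and absorbing the $\rho^{-m-2}$ prefactor with the $H$-decay; the two routes are minor repackagings of each other.

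One small bookkeeping correction: in the case $\rho\geq r/4$, nothing forces $B_\rho(y)\subset B_{2r}(x_0)$ (or even $B_\rho(y)\subset\Omega$), so the final inequality $C\rho^{-m}\Dir(u,B_\rho(y))\leq C r^{-m}\Dir(u,B_{2r}(x_0))$ is not justified as written. Simply replace $\rho$ by $\rho':=\min(\rho,r)$ in that case: $B_{\rho'}(y)$ is still contained in the regular set, lies in $B_{2r}(x_0)$, and $\rho'\geq r/4$, so both the Cauchy/Poincar\'e step and the final comparison go through.
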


\begin{proof}Through a classical Morrey-type argument (cf. \cite[Section 8]{DLHMS_linear}), it suffices to show the estimate
\begin{equation}\label{e:decay1}
\int_{B_{r} (x)} |Du|^2 \leq C(m) \frac{r^{m}}{\rho^{m}} \int_{B_\rho (y)} |Du|^2 \qquad \forall x\in B_{\rho/8} (y)\, , \forall r< \frac{\rho}{8}\, .
\end{equation}
In turn, we notice that \eqref{e:decay1} is implied by
\begin{equation}\label{e:decay2}
\int_{B_\sigma (z)} |Du|^2 \leq \frac{\sigma^{m}}{s^{m}} \int_{B_s (z)} |Du|^2 \qquad \forall \sigma <s \quad \mbox{if $u (z) = Q \a{0}$.}
\end{equation}
Indeed, assume \eqref{e:decay2} and fix $x\in B_{\rho/8} (y)$ and $r<\frac{\rho}{8}$. If $u (x) = Q \a{0}$, then it suffices to apply \eqref{e:decay2} with $s=\frac{\rho}{4}$ and $\sigma = r$. If $u$ is free of singularities in $B_{\rho} (y)$, then the desired estimate follows from estimates for the classical harmonic functions by Remark \ref{r:sheeting}. Otherwise, let $z$ be the closest point to $x$ with the property that $u (z)=Q\a{0}$ and set $\tau:= \abs{x-z}$. If $\tau\geq \frac{\rho}{8}$, again from Remark \ref{r:sheeting} and classical regularity for harmonic functions we conclude
\[
\int_{B_r (x)} |Du|^2 \leq C \frac{r^{m}}{\rho^{m}} \int_{B_{\rho/8} (x)} |Du|^2
\] 
 and thus \eqref{e:decay1}. 
Consider next the case when $\tau < \frac{\rho}{8}$. If $r\geq \tau$, we can use directly \eqref{e:decay2} to estimate
\[
\int_{B_r (x)} |Du|^2 \leq C \int_{B_{2r} (z)} |Du|^2 \leq C \frac{r^{m}}{\rho^{m}} \int_{B_{\rho/2} (z)} |Du|^2\, ,
\]
while when $r\leq \tau$ we use the classical theory of harmonic functions and then \eqref{e:decay2}:
\begin{equation*}
\int_{B_r (x)} |Du|^2 \leq  C \frac{r^{m}}{\tau^{m}} \int_{B_\tau (x)} |Du|^2
\leq C \frac{r^{m}}{\tau^{m}} \int_{B_{2\tau} (z)} |Du|^2\\
\leq C \frac{r^{m}}{\tau^{m}} \frac{\tau^{m}}{\rho^{m}} \int_{B_{\rho/2} (z)} |Du|^2\, .
\end{equation*}
We finally come to \eqref{e:decay2}. Without loss of generality assume $z=0$ and $s=1$. Set 
\[
h (\rho) := \frac{1}{\rho^{m-1}} \int_{\partial B_\rho} |u|^2 
\]
and, recalling \cite[Proposition 9.3]{DLHMS_linear}, compute
\begin{equation}\label{e:h'}
h' (\rho) = \frac{2}{\rho^{m-1}} \int_{B_\rho} |Du|^2\,.
\end{equation}
In particular
\[
\rho \frac{h'(\rho)}{h (\rho)} = 2 \frac{\rho \int_{B_\rho} |Du|^2}{\int_{\partial B_\rho} |u|^2} = 2I_0(\rho)\, . 
\]
Next, using the monotonicity of $I_0(\rho)$ and the assumption $I_0\geq 1$, we derive
\[
\frac{d}{d\rho} \log h (\rho) \geq \frac{2 I_0 (0)}{\rho} \geq \frac{2}{\rho}\, .
\] 
Integrating the latter inequality on the interval $\left(\sigma,1\right)$ we have
\[
\frac{h(1)}{h (\sigma)} \geq \frac{1}{\sigma^2}\, ,
\] 
i.e.
\[
\int_{\partial B_\sigma} |u|^2 \leq \sigma^{m+1} \int_{\partial B_1} |u|^2 \qquad \forall\,\sigma < 1\,.
\]
Using again the monotonicity of the frequency function, we infer
\[
\int_{B_\sigma} |Du|^2 \leq \frac{1}{\sigma} \int_{B_1} |Du|^2 \frac{\int_{\partial B_\sigma} |u|^2}{\int_{\partial B_1} |u|^2} \leq \sigma^m \int_{B_1} |Du|^2\, .
\] 
\end{proof}

\section{Weiss' functional and its decay when $I_x (0) =1$}

In this section we introduce Weiss' functional, prove its monotonicity and show that it converges to $0$ with a power rate at points $x$ where $I_x(0)$ equals $1$.

\begin{definition}\label{d:Weiss}
Given a $\Dir$-minimizing function $u$ on $\Omega\subset \mathbb R^m$ and a ball $B_r (x)\subset \Omega$, we set $I:= I_x (0)$ and we define the Weiss functional as
\begin{equation}\label{e:Weiss}
\begin{split}
W_{x,u} (r) :&= r^{-(m+2I-2)} D_{x,u} (r) - I r^{-(m+2I-1)} H_{x,u} (r)\\
&=\frac{1}{r^{m+ 2I-2}} \int_{B_r (x)} |Du|^2 - \frac{I}{r^{m+2I-1}} \int_{\partial B_r (x)} |u|^2.
\end{split}
\end{equation}
\end{definition}

Notice that $W_{x,u}(r) \ge 0$ as a consequence of the monotonicity of the frequency function. If $u$ is clear from the context, we will simply write $W_x (r)$. The main goal of this section is the proof of the following monotonicity and decay result for the Weiss functional.

\begin{proposition}\label{p:Weiss}
If $u$ is $\Dir$-minimizing in $\Omega\subset \mathbb R^m$ with $m \ge 2$ and $B_r (x)\subset \Omega$, then $r\mapsto W_x (r)$ is absolutely continuous and
\begin{equation}\label{e:Weiss_monot}
\frac{d}{dr} W_x (r) \geq 0\, .
\end{equation}
Moreover, for every positive $C>0$ there are geometric constants $\varepsilon (m, C), \alpha (m)>0$ such that the following holds. If $I = I_x (0)=1$ and, for some $r>0$ with $B_r(x) \subset \Omega$, $W_x (r) \leq \varepsilon r^{-m-1} H_x(r)$ and $D_x(2r) \leq Cr^{-1} H_x(r)$, then
\begin{equation}\label{e:Weiss_decay}
W_x (s) \leq \frac{s^\alpha}{r^\alpha} W_x (r) \qquad \mbox{for every $0<s <r 
$}\,.
\end{equation}
\end{proposition}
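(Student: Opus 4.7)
For the monotonicity, I will compute $W_x'(r)$ directly. From the first-variation machinery for special $Q$-valued $\Dir$-minimizers recorded in \cite[Proposition 9.3]{DLHMS_linear} we have the outer-variation identity $D_{x,u}'(r) = \tfrac{m-2}{r}D_{x,u}(r) + 2\int_{\partial B_r(x)} |\partial_\nu u|^2$, the elementary differentiation $H_{x,u}'(r) = \tfrac{m-1}{r}H_{x,u}(r) + 2\int_{\partial B_r(x)}\langle u,\partial_\nu u\rangle$, and the Pohozaev-type identity $\int_{\partial B_r(x)}\langle u,\partial_\nu u\rangle = D_{x,u}(r)$. Substituting these into the definition of $W_x(r)$ and simplifying, I expect the calculation to yield
$$W_x'(r) = \frac{2}{r^{m+2I-2}} \int_{\partial B_r(x)} \Bigl|\partial_\nu u - \frac{I}{r} u\Bigr|^2 \geq 0,$$
which gives simultaneously absolute continuity and monotonicity; the integrand vanishes exactly when $u$ is $I$-homogeneous in $B_r(x)$.

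For the decay, the algebraic rewriting $W_x(r) = r^{1-m-2I}H_x(r)\bigl(I_x(r) - I\bigr)$ shows that, with $I=1$, the smallness assumption $W_x(r) \le \varepsilon r^{-m-1}H_x(r)$ amounts to $I_x(r)\le 1+\varepsilon$, while the non-degeneracy $D_x(2r)\le Cr^{-1}H_x(r)$ bounds $I_x(2r)$ from above; monotonicity of $I_x$ propagates both bounds to all smaller scales. I would then reduce \eqref{e:Weiss_decay} to a one-step dyadic improvement: there exist $\varepsilon_0,\tau,\theta\in(0,1)$ such that whenever the two conditions hold at scale $r$ with $\varepsilon\le\varepsilon_0$, one has $W_x(\tau r)\le \theta W_x(r)$. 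Iterating on dyadic scales $s = \tau^k r$ and interpolating yields \eqref{e:Weiss_decay} with $\alpha = -\log\theta/\log\tau>0$.

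The dyadic improvement I would prove by compactness and contradiction: if it failed along a sequence $(u^{(k)})$ at scales $r_k$, I would rescale each $u^{(k)}$ via $\iota_{x,r_k}$ and normalize by $(H^{(k)}(r_k)/r_k^{m-1})^{1/2}$, obtaining a new sequence with uniformly bounded Dirichlet energy on $B_2$ (thanks to the upper bound on $I^{(k)}(2r_k)$). By the strong $W^{1,2}_{\mathrm{loc}}$-compactness of $\Dir$-minimizers with $\etab\circ u\equiv 0$ proved in \cite{DLHMS_linear}, a subsequence converges to a limit $u_\infty$, still $\Dir$-minimizing with $\etab\circ u_\infty\equiv 0$. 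Normalization forces $W_{u_\infty}(1)=0$, hence $u_\infty$ is $1$-homogeneous, and by the inductive Theorem \ref{thm:main3} in dimension $m$ its frequency equals $1$ and its structure is explicit. The same inductive classification pins the spectrum of admissible homogeneity exponents to $\mathbb{Z}_{\geq 1}$, so perturbations of a $1$-homogeneous minimizer activate only modes of frequency $\ge 2$; this spectral gap, extracted from the strong convergence, quantitatively contradicts the assumed failure of dyadic improvement as soon as $\theta > \tau^2$.

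The main obstacle is that special $Q$-valued maps do not literally admit an orthogonal spherical-harmonic decomposition, so the spectral gap cannot be read off a closed-form Fourier computation; it must be extracted indirectly by combining strong compactness of $\Dir$-minimizers with the inductive classification of $1$-homogeneous minimizers, which confines possible frequencies to the positive integers and so supplies the gap between $1$ and $2$. The non-degeneracy assumption $D_x(2r)\le Cr^{-1}H_x(r)$ is precisely what prevents the blow-up from degenerating to the zero map, and the monotonicity of $I_x$ is what propagates the two hypotheses consistently down the dyadic tower.
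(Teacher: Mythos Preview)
Your monotonicity argument is correct and essentially equivalent to the paper's: the paper rewrites $W'(1)$ as $2(m+2I-2)\bigl(\int_{B_1}|Du^I|^2-\int_{B_1}|Du|^2\bigr)$ with $u^I$ the $I$-homogeneous extension of $u|_{\partial B_1}$, and concludes nonnegativity from minimality of $u$; your completed-square formula $W_x'(r)=2r^{2-m-2I}\int_{\partial B_r}|\partial_\nu u-\tfrac{I}{r}u|^2$ is the standard alternative and follows from the same variation identities.

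The decay argument, however, has a genuine gap. In your contradiction scheme you normalize so that $H_k(1)=1$, $D_k(2)\le C$, and $W_k(1)\le\varepsilon_k$; passing to the strong $W^{1,2}$ limit $u_\infty$ you get $W_{u_\infty}(1)=0$ and hence $u_\infty$ is $1$-homogeneous. But then $W_{u_\infty}(\tau)=0$ as well, so the failed inequality $W_k(\tau)>\theta\,W_k(1)$ degenerates to $0\ge 0$ in the limit and yields no contradiction. Extracting a contradiction at this level would require controlling the \emph{ratio} $W_k(\tau)/W_k(1)$, i.e.\ a second-order (linearized) analysis around $u_\infty$; but, as you yourself note, special $Q$-valued maps have no linear structure and no spherical-harmonic decomposition, so the sentence ``this spectral gap, extracted from the strong convergence, quantitatively contradicts the assumed failure'' is not an argument --- it is precisely the missing step.

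The paper fills this gap with an \emph{epiperimetric inequality} (Lemma~\ref{l:epiperimetric}): it first uses compactness exactly as you do to place $u$ near a $1$-homogeneous minimizer (classified in Lemma~\ref{l:homogeneous}), but then \emph{constructs by hand} a competitor $w$ on $B_1$ with $w|_{\partial B_1}=u|_{\partial B_1}$ such that $\int_{B_1}|Du^I|^2-\int_{B_1}|Dw|^2\ge\delta\bigl(\int_{B_1}|Du^I|^2-\int_{\partial B_1}|u|^2\bigr)$. The construction is done sheet-by-sheet via Fourier expansion in Dirichlet eigenfunctions on each connected component $F_j^\pm$ of $\{|u^\pm|>0\}\cap\partial B_1$, exploiting that the two large components $F_1^\pm$ are close to half-spheres (so their spectra are close to $\{m-1,2m,\dots\}$, which is the concrete spectral gap) and the remaining components are small (so their first eigenvalues are huge). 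Combined with the paper's version of the monotonicity identity this yields $W'(s)/W(s)\ge 2m\delta/s$, which integrates to \eqref{e:Weiss_decay}. A pure compactness-plus-classification argument does not produce this competitor, and without it the dyadic improvement does not follow.
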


We will divide the proof of Proposition \ref{p:Weiss} in four steps: the monotonicity of $W_x(r)$, its decay at points $x$ of frequency $I_x(0)=1$ assuming epiperimetric inequality (see Lemma \ref{l:epiperimetric}), the classification of $1$-homogeneous blow-ups, and, finally, the proof of the epiperimetric inequality.

\subsection{Monotonicity and proof of \eqref{e:Weiss_monot}} Without loss of generality we assume $x=0$ and drop the subscript $x$ from the $W,D$, and $H$ functionals.  By a standard scaling argument, it suffices to show that $W' (1) \ge 0$. We then compute
\[
W' (1) = - (m+2I-2)\, D (1) + D'(1) + I \,(m+2I-1) \,H (1) - I\, H' (1)\, .
\]
We next use \cite[Eq. (9.5)]{DLHMS_linear} and \cite[Eq. (9.4)]{DLHMS_linear} together with the obvious identity $D'(1) = \int_{\partial B_1} |Du|^2$ to compute
\begin{align}
W' (1) &= - (m+4I-2)\, D(1) + D'(1) + 2\,I^2\, H(1)\nonumber\\
& = - 2 (m+2I-2)\, D(1) + 2\int_{\partial B_1} |Du|^2 - 2\int_{\partial B_1} |\partial_\nu u|^2 + 2I^2\int_{\partial B_1} |u|^2\nonumber\\
&= -2 (m+2I-2) D(1) + 2 \int_{\partial B_1} \left(|D_\tau u|^2 + I^2 |u|^2\right)\, , \label{e:W'-partial}
\end{align}
where $\partial_\nu u$ and $D_\tau u$ denote the normal and tangential derivatives of $u$ at $\partial B_1$. 
We now introduce the notation $u^I$ for the $I$-homogeneous extension of $u$ from $\partial B_1$ to $B_1$, namely:
\begin{equation} \label{d:hom_ext}
u^I (x) := \sum_{i=1}^Q \left(\llbracket r^I u_i (r^{-1} x)\rrbracket, \epsilon (r^{-1} x)\right)\,, \qquad \mbox{where $r:=\abs{x}$ and $\varepsilon(\cdot) \in \{-1,1\}$}\,.
\end{equation}
A straightforward computation yields
\[
\int_{B_1} |Du^I|^2 = \frac{1}{m + 2I -2}\int_{\partial B_1} \left(|D_\tau u|^2 + I^2 |u|^2\right)\, .
\]
Inserting the latter identity in \eqref{e:W'-partial} then gives
\begin{equation}\label{e:W'}
W' (1) = 2 (m+2I-2) \int_{B_1} \left(|Du^I|^2 - |Du|^2\right)\, .
\end{equation}
The minimality of $u$ implies therefore $W' (1) \geq 0$. \qed 

\subsection{Epiperimetric inequality and proof of the decay \eqref{e:Weiss_decay}} The key to the decay property \eqref{e:Weiss_decay} is given by the following lemma, whose proof is inspired by \cite{SpVe}.

\begin{lemma}[Epiperimeteric Inequality]\label{l:epiperimetric} There is a positive constant $\delta$, depending only on the dimension $m$, with the following property. 
For every fixed positive $M$ there is a positive $\gamma = \gamma (M, m) < 1$ such that the following holds.
Assume $u\in W^{1,2} (B_2, \Iqspecl)$ is $\Dir$-minimizing, $I:=I_0 (0)=1$, $W (1) \leq \gamma$, $H(1) =1$ and $D (2) \leq M$. Then, for $u^I$ as in \eqref{d:hom_ext} one has
\begin{equation}\label{e:epiperimetric}
\int_{B_1} \left(|Du^I|^2 - |Du|^2\right) \geq \delta\, W (1)\, . 
\end{equation} 
\end{lemma}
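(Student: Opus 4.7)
The plan is to argue by compactness and contradiction, in the spirit of \cite{SpVe} mentioned just after the statement, but exploiting the inductive hypothesis on the classification of $1$-homogeneous $\Dir$-minimizers in dimension $m$.

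\textbf{Setup and limit.} Suppose the claim fails for some fixed $M$. Then there are $\Dir$-minimizers $u_k \in W^{1,2}(B_2,\Iqspecl)$ with $I_{u_k}(0)=1$, $H(u_k,1)=1$, $D(u_k,2)\le M$, $w_k:=W(u_k,1)\downarrow 0$, and
\[
\int_{B_1}\bigl(|Du_k^I|^2 - |Du_k|^2\bigr) < \delta_k\, w_k,\qquad \delta_k\downarrow 0.
\]
By the $W^{1,2}$-compactness for $\Dir$-minimizing special $Q$-valued maps established in \cite{DLHMS_linear}, an unrelabeled subsequence converges strongly in $W^{1,2}_{\rm loc}(B_2)$ to a $\Dir$-minimizer $u_\infty$. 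Since $H, D, W$ are continuous under this convergence, $H(u_\infty,1)=1$ and $W(u_\infty,1)=0$; the monotonicity and rigidity of the frequency function then force $u_\infty$ to be $1$-homogeneous in $B_1$. By the inductive hypothesis (Theorem \ref{thm:main3} in dimension $m$), $u_\infty$ is completely determined by a non-zero linear form $\ell(x)=a\cdot x$: on each half-space $\Om^\pm:=\{\pm\ell>0\}$ one has $u_\infty = \bigl(\sum_i\a{A^\pm_i \ell},\pm 1\bigr)$ with $\sum_i A^\pm_i=0$ and $\sum_i (A^+_i)^2=\sum_i (A^-_i)^2$, while $\sing(u_\infty)=\{\ell=0\}$.

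\textbf{Linearization and contradiction.} On any relatively compact open $U\subset B_2\setminus\{\ell=0\}$, classical elliptic regularity yields smooth convergence $u_k\to u_\infty$, and the branches of $u_k$ can be labelled as harmonic functions $u_{k,i}^\pm$ converging to $A^\pm_i\ell$. I consider the rescaled deviations
\[
v_{k,i}^\pm := (u_{k,i}^\pm - A^\pm_i\ell)/\sqrt{w_k}.
\]
A Łojasiewicz-type estimate expressing $w_k=W(u_k,1)$ as comparable to the squared $L^2(\partial B_1)$-distance of $u_k$ to the family of $1$-homogeneous minimizers of frequency $1$ yields uniform $L^2(\partial B_1)$-bounds on $v_k$; harmonicity then promotes this to uniform $W^{1,2}$-bounds on compact subsets of $B_1\setminus\{\ell=0\}$. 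Passing to a further subsequence, $v_k \to v=(v_i^\pm)$ locally smoothly, with $v_i^\pm$ harmonic on $\Om^\pm$. Dividing the failure inequality by $w_k$ and sending $k\to\infty$ gives that the linearized Weiss energy of $v$ vanishes, while the minimality of $u_k$ linearizes to a minimality property of $v$ which, combined with the vanishing linearized Weiss functional, forces $v$ to coincide with its own $1$-homogeneous extension, i.e.\ $v$ is itself $1$-homogeneous. Linearizing the transmission/Dirichlet conditions of Theorem \ref{thm:main}(c3) at $u_\infty$ couples $v_i^+$ and $v_i^-$ across $\{\ell=0\}$; among $1$-homogeneous harmonic functions on the two half-spaces satisfying these conditions, the only solutions are scalar multiples of $\ell$. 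But exactly this mode has been subtracted off in the very definition of $v_{k,i}^\pm$, so $v\equiv 0$, contradicting the non-triviality enforced by the $L^2$-normalization.

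\textbf{Main obstacle.} The delicate step is the linearization near the free-boundary hyperplane $\{\ell=0\}$: the harmonic selections $u_{k,i}^\pm$ are only defined on the respective sides, $\sing(u_k)$ is merely asymptotically flat, and one needs (i) a Caccioppoli-type bound so that the rescaled deviations enjoy uniform $W^{1,2}$-estimates up to compact subsets of $\{\ell=0\}$, and (ii) an identification of the correct linearized transmission conditions satisfied by the limit $v$. The Łojasiewicz-type inequality quantifying $W(u_k,1)$ as a distance to the family of $1$-homogeneous minimizers also needs justification. Both points are made tractable by the fact that the target is $\R$: the sheets are classical harmonic functions on each side and the interface is a smooth hyperplane, with no branching or cuspidal behaviour — precisely the feature which distinguishes our setting from the higher-dimensional double-phase problem discussed in \cite{SpVe} and makes the epiperimetric inequality available in all dimensions $m$.
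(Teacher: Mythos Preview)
Your proposal follows a contradiction--linearization strategy, whereas the paper's proof is constructive: it uses the closeness of $u$ to a $1$-homogeneous minimizer only to partition $\partial B_1$ into the connected components $F_j^\pm$ of $\Omega^\pm\cap\partial B_1$, then builds an explicit competitor $w$ by extending $u|_{F_j^\pm}$ into an annulus $B_1\setminus \overline{B_\sigma}$ via the spectral decomposition of the Dirichlet Laplacian on each $F_j^\pm$, and fills $B_\sigma$ by invoking the classical epiperimetric inequality for single-valued harmonic functions. All estimates are explicit in the eigenvalues $\lambda_k(F_j^\pm)$, and no linearization across the free boundary is required.

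Your argument has a genuine gap at the point where you subtract the \emph{fixed} limit $A_i^\pm\ell$ rather than the nearest element of the family of $1$-homogeneous minimizers. This breaks the proof in two places. First, the \L ojasiewicz-type bound $\|u_k-u_\infty\|_{L^2(\partial B_1)}^2\le C\,w_k$ is false as stated: the family of $1$-homogeneous minimizers is multi-dimensional (parameterized by the hyperplane direction \emph{and} by the vectors $A^\pm\in\R^Q$ subject to $\sum_iA_i^\pm=0$, $|A^+|=|A^-|$), and if $u_k$ approaches $u_\infty$ tangentially along this family then $w_k=W(u_k,1)$ is of higher order than $\|u_k-u_\infty\|^2$, so $v_k$ is unbounded and no limit $v$ can be extracted. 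Second, even granting a limit $v$, the kernel of the linearized problem is the full tangent space to that family --- it includes rotational modes of the hyperplane and variations of the coefficients $A_i^\pm$, not merely ``scalar multiples of $\ell$'' --- and none of it is removed by your subtraction; hence $v$ landing in the kernel gives no contradiction. The standard remedy is to subtract instead the projection of $u_k$ onto the family and prove coercivity of the second variation on the orthogonal complement, but making this rigorous here requires precisely the free-boundary analysis (uniform estimates up to the moving interface $\sing(u_k)$, identification of the correct linearized transmission conditions) that you yourself flag as the main obstacle and leave unresolved.
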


Next, we show how to conclude the proof of Proposition \ref{p:Weiss} from Lemma \ref{l:epiperimetric}.

\begin{proof}[Proof of Proposition \ref{p:Weiss}]
As usual, and without loss of generality, we assume $x=0$. We also suppose that $I=I_0(0)=1$. Let $C$ be any given positive constant and fix a larger $M$, whose choice will be specified later. Let then $\gamma=\gamma(M,m)$ be given by Lemma \ref{l:epiperimetric}. Combining \eqref{e:W'} and \eqref{e:epiperimetric} and using the scaling invariance  of the problem we conclude that, if $W (s) \leq \gamma s^{-m-1} H(s)$ and $D(2s) \leq M s^{-1} H(s)$, then
\begin{equation}\label{e:differential}
\frac{W' (s)}{W(s)} \geq \frac{2m \delta}{s}\, .
\end{equation}
Let $\varepsilon < \gamma$ to be fixed, and choose $r \in \left(0,1\right)$ as in the second part of Proposition \ref{p:Weiss}, in particular such that $W(r) \leq \varepsilon r^{-m-1} H(r)$ and $D(2r) \leq M\,r^{-1}\, H (r)$. Define
\[
s_0:= \inf \{\sigma\leq r  : W (s) \leq \gamma s^{-m-1} H(s) \;\mbox{and}\; D(2s) \leq M s^{-1} H(s)\;\; \forall s\in (s_0, r)\}\,.
\]
Clearly $s_0 < r$ by continuity of $W$, $D$ and $H$. Moreover, for all $s\in (s_0, r]$ we have that \eqref{e:differential} holds; hence, integrating the inequality, and setting $\alpha:= 2m\delta$, we conclude
\begin{equation}\label{e:from_above}
W (s) \leq \frac{s^\alpha}{r^\alpha} W(r) \leq \frac{\varepsilon s^\alpha}{r^{m+1+\alpha}} H (r) \qquad \forall s\in (s_0, r]\, .
\end{equation}
Note that the first part of \eqref{e:from_above} would imply the desired decay \eqref{e:Weiss_decay} if we could show that $s_0=0$. To this aim, we observe that, should $s_0$ be positive, it would either be $W (s_0) = \gamma s_0^{-m-1} H (s_0)$ or $D (2s_0) = M s_0^{-1} H (s_0)$.

Now observe, again using \cite[Eq. (9.5)]{DLHMS_linear} together with the monotonicity of the Weiss functional, that
\begin{equation}\label{e:der_H}
\frac{d}{ds} \left(\frac{H(s)}{s^{m+1}}\right) = \frac{2}{s} W (s) \leq \frac{2 \varepsilon}{s^{1-\alpha}} \frac{H(r)}{r^{m+1+\alpha}}\, . 
\end{equation}
Hence, integrating for $s$ between $\sigma$ and $r$ we conclude
\begin{equation}\label{e:from_below}
\frac{H (\sigma)}{\sigma^{m+1}}\geq \frac{H(r)}{r^{m+1}} \left(1 - \frac{2\varepsilon}{\alpha} + \frac{2\varepsilon \sigma^\alpha}{\alpha r^\alpha}\right)
\geq  \frac{H(r)}{r^{m+1}} \left(1 - \frac{2\varepsilon}{\alpha}\right)\, \qquad \forall \sigma \in (s_0, r)\, .
\end{equation}
In particular, combining \eqref{e:from_above} and \eqref{e:from_below} we conclude
\[
W(\sigma) \leq \frac{\varepsilon}{1-\frac{2\varepsilon}{\alpha}} \frac{H (\sigma)}{\sigma^{m+1}} \qquad \forall \sigma \in (s_0, r]\, .
\]
Hence, if we choose $\varepsilon$ small enough compared to $\gamma$ and $\alpha$, and so dimensional, we infer 
\begin{equation}\label{e:threshold_1}
W (\sigma) \leq \frac{\gamma}{2} \, \frac{H(\sigma)}{\sigma^{m+1}}\qquad  \forall \sigma \in (s_0, r].
\end{equation}
In particular, if $s_0$ were positive it should be $D (2s_0) = M s_0^{-1} H (s_0)$, since \eqref{e:threshold_1} implies that the alternative $W(s_0)=\gamma s_0^{-m-1}H(s_0)$ cannot occur.

Next, for $\sigma \in (\max \{s_0, \frac{r}{2}\}, r]$ \eqref{e:from_below} implies
\[
H (\sigma) \geq 2^{-m-1} H (r) \left(1-\frac{2\varepsilon}{\alpha}\right) \geq 2^{-m-2} H (r)
\] 
(again assuming $\varepsilon$ is chosen appropriately small). On the other hand, since $D(2\sigma) \leq D(2r) \leq C r^{-1} H (r)\leq C \sigma^{-1} H (r)$. 
In particular
\[
D (2\sigma) \leq 2^{m+2} \sigma^{-1} C H (\sigma)\,,
\]
and thus by choosing $M \geq 2^{m+3} C$ we conclude $s_0 < \frac{r}{2}$. Now, for $\sigma \in (s_0, \frac{r}{2}]$, we have from \eqref{e:from_below} that
\[
H (\sigma) \geq 2^{-m-1} \left(1-\frac{2\varepsilon}{\alpha}\right) H (2\sigma) \geq 2^{-m-2} H (2\sigma)\, .
\]
On the other hand, by the definition of the Weiss functional
\[
D (2\sigma) = (2\sigma)^{-1} H (2\sigma) + (2\sigma)^m W (2\sigma) \leq (2\sigma)^{-1} (1+ \gamma) H (2\sigma) \leq 2^{m+1} (1+\gamma) \sigma^{-1} H (\sigma) \, .
\]
Hence, since $\gamma < 1$, it suffices to impose $M \geq 2^{m+2}$ to conclude that $s_0$ cannot be positive.

Summarizing, $s_0=0$ and \eqref{e:from_above} holds for all $s\in (0,r]$, namely we have proved \eqref{e:Weiss_decay}.
\end{proof}

\subsection{Classification of $1$-homogeneous $\Dir$-minimizers} Before coming to the proof of Lemma \ref{l:epiperimetric} we first observe that

\begin{lemma}\label{l:homogeneous}
If $u\in W^{1,2}_{{\rm loc}} (B_1, \Iqspecl)$ is a $1$-homogeneous locally $\Dir$-minimizing function with $\etab\circ u = 0$ then, up to a suitable change of coordinates,
\begin{equation}\label{e:1-homogeneous}
u(x)=\left\{
\begin{array}{ll}
\left(\sum_i \a{a_i x_1}, +1\right) \qquad &\mbox{if $x_1 \geq 0$}\\ \\
\left(\sum_i \a{b_i x_1}, -1\right) &\mbox{of $x_1\leq 0$}
\end{array}
\right.
\end{equation} 
with $a_1 \leq \ldots \leq a_Q$, $b_1 \leq \ldots \leq b_Q$, $\sum_i a_i =\sum_i b_i = 0$ and $|a|=|b|$. 
\end{lemma}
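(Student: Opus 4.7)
The plan is to combine $1$-homogeneity with the sheeting Remark \ref{r:sheeting} to describe $u$ explicitly on each connected component of the smooth part $\Omega^+\cup\Omega^-$, then use the sheet ordering and $\etab\circ u\equiv 0$ to force $\Omega^+$ and $\Omega^-$ to be opposite open half-spaces, and finally derive $|a|=|b|$ from the first inner variation. If $u\equiv Q\a{0}$ the statement is trivial, so I henceforth assume $\Dir(u)>0$.

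First, on each connected component $V$ of $\Omega^+\cup\Omega^-$, Remark \ref{r:sheeting} gives $u=(\sum_i\a{u_i},\pm 1)$ with $u_1\le\cdots\le u_Q$ harmonic on the open cone $V$. Since $u$ is $1$-homogeneous, so is each $u_i$; but a $1$-homogeneous harmonic function on an open cone is necessarily linear (its partials are $0$-homogeneous harmonic functions, hence locally constant), so $u_i(x)=A_i\cdot x$ for some $A_i\in\R^m$, and $\etab\circ u\equiv 0$ gives $\sum_i A_i=0$.

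Next I identify $V$ with an open half-space. Set $W:=\mathrm{span}(A_1,\dots,A_Q)^\perp$. Since $u\ne Q\a{0}$ on $V$, we have $V\cap W=\emptyset$; by continuity $u=Q\a{0}$ on $\partial V$, forcing $A_i\cdot x=0$ for all $i$ and every $x\in\partial V$, hence $\partial V\subset W$. A short connectedness argument (any path inside the component of $\R^m\setminus W$ containing $V$ cannot cross $\partial V$) identifies $V$ with a full connected component of $\R^m\setminus W$. If $\dim W\le m-2$, then $\R^m\setminus W$ is connected, so $V=\R^m\setminus W$ would contain both $x$ and $-x$ for generic $x$; the sheet ordering applied at $\pm x$ then forces $A_1=\cdots=A_Q$, and combined with $\sum_i A_i=0$ yields $A_i=0$, contradicting $V\not\subset\{u=Q\a{0}\}$. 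Hence $\dim W=m-1$, all $A_i$ are parallel to a common direction, and $V$ is one of the two open half-spaces bounded by the hyperplane $W$. Two distinct open half-spaces are either opposite or overlap; using that connected components of $\Omega^\pm$ are pairwise disjoint and $\Omega^+\cap\Omega^-=\emptyset$, it follows that $\Omega^+$ and $\Omega^-$ each have at most one connected component, and that when both are nonempty they are opposite half-spaces. The residual case $\Omega^-=\emptyset$ with $\Omega^+=\R^m\setminus W$ is excluded because then $u$ reduces to a single globally defined classical $Q$-valued harmonic function $\sum_i\a{A_i\cdot x}$ with $A_1\cdot x\le\cdots\le A_Q\cdot x$ on all of $\R^m$, whence (by the same $\pm x$ trick) all $A_i=0$, contradicting $\Dir(u)>0$. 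After rotating so that the common normal is $e_1$, we get $\Omega^+=\{x_1>0\}$, $\Omega^-=\{x_1<0\}$, and the asserted representation of $u$; a harmless relabeling of the unordered sheets on each side places the coefficients in the order $a_1\le\cdots\le a_Q$ and $b_1\le\cdots\le b_Q$.

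Finally, for $|a|=|b|$, I test the first inner variation identity \cite[Proposition 7.1]{DLHMS_linear} against the vector field $X=\phi\,e_1$ for arbitrary $\phi\in C^1_c(\R^m)$. Since $Du$ is purely in the $e_1$-direction on each half-space, the identity reduces to $\int_{\R^m}|Du|^2\,\partial_1\phi\,dx=0$; integrating by parts in $x_1$ on each side yields $\bigl(\sum_i b_i^2-\sum_i a_i^2\bigr)\int_{\{x_1=0\}}\phi\,d\mathcal{H}^{m-1}=0$, whence $|a|^2=|b|^2$. The most delicate step I foresee is the half-space identification: I must rule out pathological cones with low-dimensional boundary, which is precisely the role of the topological identification of $V$ as a full connected component of $\R^m\setminus W$.
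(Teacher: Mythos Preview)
Your proof is correct and follows essentially the same strategy as the paper's: linearity of the sheets on each connected component via Remark~\ref{r:sheeting} and $1$-homogeneity, identification of each component as an open half-space, matching $\Omega^+$ and $\Omega^-$ as opposite half-spaces (ruling out the one-sided case by triviality), and the inner variation for $|a|=|b|$. The only minor difference is how the half-space structure is reached: the paper singles out the hyperplane $H=\{L_1=L_Q\}$ directly (automatic since $L_1\not\equiv L_Q$, else $u$ is trivial), whereas you use $W=\mathrm{span}(A_i)^\perp$ and need your extra step to exclude $\dim W\le m-2$; the paper's shortcut avoids that detour but the two arguments are otherwise the same.
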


\begin{proof}
We can use the homogeneity to extend the map to $\Omega=\mathbb R^m$ and observe that it must be locally $\Dir$-minimizing. Consider any connected subset $E\subset \Om^+$. On it we must have $u (x) = \left(\sum_i \a{v_i \cdot x}, +1\right)$ for some vectors $v_1, \ldots , v_Q$. Moreover the functions $L_i (x) = v_i \cdot x$ can be ordered as $L_1 \leq \ldots \leq L_Q$ and either $L_j < L_{j+1}$ or $L_j \equiv L_{j+1}$. If it were $L_1 = L_Q$, then we would have an open set where $u \equiv Q \a{0}$, thereby implying that $u$ must be trivial. Hence $L_1<L_Q$. Consider now a \emph{maximal} connected open subset $E$ of $\Om^+$. Due to the maximality of $E$, we necessarily must have $\partial E\subset \sing (u)$. Thus, if we denote by $H$ the hyperplane $\{x\cdot (v_1-v_Q) =0\}$, we must have $\partial E \subset H$. In particular , since $E$ is connected, either it is empty, or it is one of two connected components of $\mathbb R^m \setminus H$. If $E$ were empty, than $\Om^+$ would necessarily be empty, implying again that $u$ is trivial. So, up to changing coordinates, we can assume that $E = \{x_1>0\}$. Since the same argument applies for any maximal connected open subset of $\Omega^+$ and $\Omega^-$, and since, for the same reason given above, $\Omega^-$ cannot be empty, the only possibility is then that $\Omega^+ = \{x_1 > 0\}$ and $\Omega^-  = \{x_1 < 0 \}$. It is then obvious that \eqref{e:1-homogeneous} above holds. Next, $\sum_i a_i = \sum_i b_i =0$ is an obvious consequence of $\etab\circ u = 0$, while we can use the inner variations, namely \cite[Theorem 7.1]{DLHMS_linear} to conclude that $|Du|^2$ must be constant, which implies $|a|^2 = |b|^2$.   
\end{proof}

\subsection{Proof of Lemma \ref{l:epiperimetric}} 
By the minimality of $u$, it suffices to show the existence of an extension $w$ of $u|_{\partial B_1}$ to $B_1$ such that
\begin{equation}\label{e:comparison1}
\int_{B_1} |Du^I|^2 - \int_{B_1} |Dw|^2 \geq \delta \left(\int_{B_1} |Dw|^2 - \int_{\partial B_1} |u|^2\right)\, .
\end{equation}
But in fact we will show a stronger statement, namely the existence of a $w$ such that
\begin{equation}\label{e:comparison2}
\int_{B_1} |Du^I|^2 - \int_{B_1} |Dw|^2 \geq \delta \left(\int_{B_1} |Du^I|^2 - \int_{\partial B_1} |u|^2\right)\, .
\end{equation}
In order to show the existence of $w$, we will first use the fact that $u$ is close to a $1$-homogeneous minimizer to partition $\partial B_1$ in a suitable collection of open subsets.

\medskip 

Recall that the frequency function is upper semicontinuous in the following sense: if $u_k$ is a sequence of $\Dir$-minimizing maps on the same domain $\Omega$ converging to some $u_\infty$ in $L^2$, $x_k \to x$, $\etab \circ u_k =0$ and $u_k (x_k) = Q\a{0}$, then $\bfeta \circ u_\infty = 0$, $u_\infty (x) = Q\a{0}$, and 
\[
I_{x,u_\infty} (0) \geq \limsup_{k\to \infty} I_{x_k,u_k} (0)\, .
\]
Such upper semicontinuity follows from the monotonicity property of the frequency and the compactness of $\Dir$-minimizers in the strong local topology of $W^{1,2}$.

Therefore, if $u_k\in W^{1,2} (B_2, \Iqspecl)$ is a sequence with $\etab\circ u_k = 0$, $u_k (0) = Q\a{0}$, $I_{0,u_k} (0) =1$, $W_{0,u_k} (1)\downarrow 0$, $H_{0,u_k} (1) =1$, and $D_{0,u_k} (2)\leq M$ then $u_k$ converges, up to subsequences, to a $\Dir$-minimizer $u_\infty$ with $\etab\circ u_\infty =0$, $u (0) = Q\a{0}$, $I_{0,u_\infty}(0)\geq 1$, and
\[
\int_{B_1} |Du_\infty|^2 \leq \int_{\partial B_1} |u_\infty|^2\, ,
\]
which in turn by the monotonicity of the frequency function implies that $I_{0,u_\infty} (r) \equiv 1$ for all $r \in \left[0,1\right]$. In particular we conclude from \cite[Theorem 9.2]{DLHMS_linear} that $u_\infty$ is $1$-homogeneous and, up to a change of coordinates, it takes the form described in Lemma \ref{l:homogeneous}. Since $H_{0,u_\infty} (1) = 1$, we infer that $|Du_\infty| = c_m = |B_1|^{-1/2}$. In particular, using the notation of \eqref{d:pm},
\[
(|u_\infty^+| - |u_\infty^-|) (x) = c_m x_1\, .
\]
Observe also that the convergence is in $C^{0, \beta} (B_{3/2})$ for some $\beta (m)>0$, by \cite[Theorem 8.1]{DLHMS_linear}. Hence, if we let $G_k^+$ and  $G_k^-$ be the largest (with respect to Hausdorff measure) connected components of the sets $\{|u_k^+|> 0\}\cap \partial B_1$ and $\{|u_k^-|>0\}\cap \partial B_1$, we conclude that, for every $\eta >0$,  there is $k_0=k_0(\eta)$ large enough such that for all $k \ge k_0$:
\begin{align*}
&\partial B_1 \cap \{x_1 > \eta\} \subset G^+_k \subset \partial B_1 \cap \{x_1 > -\eta\}\,,\\
&\partial B_1 \cap \{x_1<-\eta\} \subset G^-_k \subset \partial B_1 \cap \{x_1 <\eta\}\, .
\end{align*} 

Coming back to the map $u$ of the Lemma, and denoting $\Omega=B_2$, set $F^\pm:= \Om^\pm \cap \partial B_1$, and order their connected components $F^\pm_j$ so that $\mathcal{H}^{m-1} (F^\pm_j) \geq \mathcal{H}^{m-1} (F^\pm_{j+1})$. From the discussion above it follows that, for every fixed $\eta>0$, provided $\gamma$ is chosen small enough, we have
\begin{align*}
&\partial B_1 \cap \{x_1 > \eta\} \subset F^+_1 \subset \partial B_1 \cap \{x_1 > -\eta\}\\
&\partial B_1 \cap \{x_1<-\eta\} \subset F^-_1 \subset \partial B_1 \cap \{x_1 <\eta\}\\
&\bigcup_{j\geq 2}  (F^-_j\cup F^+_j) \subset \partial B_1 \cap \{-\eta \le x_1 \le \eta\}\, .
\end{align*}
In particular, for any fixed $\kappa>0$, if we denote by $\lambda_k (\Gamma)$ the eigenvalues of the Dirichlet Laplacian on a domain $\Gamma \subset \partial B_1$ (as usual ordered 
so that $\lambda_{k+1} (\Gamma) \geq \lambda_k (\Gamma)$), we have, provided $\gamma$ is chosen small enough, that
\begin{eqnarray}
|\lambda_1 (F_1^\pm)-(m-1)|&\leq& \kappa,\label{e:eigen_1}\\
\lambda_k (F_1^\pm) &\geq& 2m - \kappa \qquad \forall k \geq 2,\label{e:eigen_2}\\
\lambda_k (F_j^\pm) &\geq& \kappa^{-1} \qquad\quad\;\; \forall k\geq 1, \forall j\geq 2\, .\label{e:eigen_3}
\end{eqnarray}
In the latter we have used the fact that the first and second eigenvalues of the Dirichlet Laplacian on the half-sphere $\partial B_1\cap \{x_1>0\}$ are $m-1$ and $2m$ and the monotonicity of each eigenvalue of the Dirichlet Laplacian with respect to set inclusion. We next wish to use the inequalities \eqref{e:eigen_1}, \eqref{e:eigen_2} and \eqref{e:eigen_3} to construct a suitable competitor $w$ for $u$ and this will be done in three steps. In order to describe them we fix the notation $U_j^\pm$ for the cones 
\[
\{t\, x: x\in F_j^\pm, t \in ]0,1[\}
\]
and we let $\sigma$ be a positive parameter smaller than $\frac{1}{2}$ whose choice will be specified later. 
\begin{itemize}
\item[(a)] In the first step we define $w$ on the open sets $U^\pm_j\cap B_1\setminus \overline{B_\sigma}$ for $j\geq 2$. An essential feature is that $w$ will vanish identically on $U^\pm_j \cap \partial B_\sigma$.
\item[(b)] In the second step we define $w$ on the open sets $U^+_1\cap B_1\setminus \overline{B_\sigma}$ and $U^-_1\cap B_1\setminus \overline{B_\sigma}$. If we denote by $z^\pm$ the first eigenfunction of the Dirichlet Laplacian on $F^\pm_1$, an essential feature of the extension is the existence of two vectors $\xi^\pm \in \mathbb R^Q$ such that
\begin{align}
w (\sigma x) &= \left(\sum_{i=1}^Q \a{\xi_i^\pm z^\pm (x)}, \pm 1\right) \qquad \forall x\in F_1^\pm\, .
\end{align}
\item[(c)] In the third step we extend $w$ to $B_\sigma$ using the special form that it has on $\partial B_\sigma$. 
\end{itemize}
In each step we compute the energy of the extension $w$ and compare it in a suitable way to the energy of the $1$-homogeneous extension $u^I$. We will then conclude the desired inequality in the final, fourth, step.

\medskip

{\bf Step 1.} Fix $F:= F_j^+$ and $U:= U_j^+$ with $j\geq 2$, the construction for $F_j^-$ being entirely analogous. Let $u_1\leq \ldots \leq u_Q$ be such that
\[
\left.u\right|_F = \left(\sum_i \a{u_i}, 1\right)\, .
\]    
We let $w_i$ be the harmonic function on $V := U\cap B_1\setminus \overline{B_\sigma}$ which equals $u_i$ on $F$ and vanishes identically on the remaining portion of the boundary. We use the spectrum of the Dirichlet Laplacian on $F$ to compute $w_i$ explicitely in polar coordinates $(r, \theta)$. Thus we let $\lambda_k=\lambda_k(F)$ be the corresponding eigenvalues, $h_k=h_k(F)$ be a choice of the corresponding eigenfunctions which gives an orthonormal base in $L^2$ and
\begin{equation}\label{e:Fourier}
\left.u_i\right|_F (\theta) = \sum_k a_{i,k} \, h_k (\theta)
\end{equation}
be the Fourier expansion of $u_i$, with $a_{i,k}=a_{i,k}(F)$. The function $w_i$ is given, in polar coordinates, by
\[
w_i (r, \theta) = \sum_k a_{i,k} \,  \rho_k (r) \, h_k (\theta) = \sum_k a_{i,k} (A_k r^{\mu_k} - B_k r^{-\mu_k}) h_k (\theta) 
\]
where 
\begin{equation}\label{e:exp=eigen}
\mu_k (m-2 +\mu_k) = \lambda_k
\end{equation} 
and the coefficients $A_k$ and $B_k$ are decided by the boundary conditions $\rho_k (1) =1$ and $\rho_k (\sigma) =0$ and thus
\begin{equation}\label{e:computation_Ak_Bk}
A_k = \frac{\sigma^{-\mu_k}}{\sigma^{-\mu_k}-\sigma^{\mu_k}}\,,\qquad B_k = \frac{\sigma^{\mu_k}}{\sigma^{-\mu_k} - \sigma^{\mu_k}}\, .
\end{equation}
Moreover
\begin{align}
\int_F |u_i|^2 &= \sum_k a_{i,k}^2\,, \qquad 
\int_F |D_\tau u_i|^2  = \sum_k \lambda_k a_{i,k}^2\, .
\end{align}
We next compute the Dirichlet energy of $w_i$. Using the harmonicity and the fact that it vanishes on $\partial V\setminus F$, we have $\int_V |Dw_i|^2 =
\int_F w_i \frac{\partial w_i}{\partial \nu}$ which, using the orthogonality of the eigenfunctions, becomes
\[
\int_V |Dw_i|^2 = \sum_k \rho_k (1)\, \rho_k'(1)\,  a_{i,k}^2 = \sum_k \mu_k\, \frac{\sigma^{\mu_k} + \sigma^{-\mu_k}}{\sigma^{-\mu_k} - \sigma^{\mu_k}}\, a_{i,k}^2\, .
\]
Finally, we wish to compute the energy of the $1$-homogeneous extension $u^I_i$ of $u_i|_F$. Such energy is given by
\begin{equation}\label{e:energy_1_hom}
\int_{U} |Du^I_i|^2 = \frac{1}{m} \int_F (|D_\tau u_i|^2 + |u_i|^2) = \sum_k \frac{1}{m}\, (\lambda_k +1)\, a_{i,k}^2\, , 
\end{equation}
and 
\begin{equation} \label{e:energy_1_hom_outside}
    \int_{V} |Du^I_i|^2 = (1-\sigma^m)\, \int_{U} |Du^I_i|^2 = (1-\sigma^m)\, \sum_k \frac{1}{m}\, (\lambda_k +1)\, a_{i,k}^2\, .
\end{equation}
Hence, for every $\delta\in \left(0,1\right)$ we have
\[
\begin{split}
 &\int_V |Du^I_i|^2 - \int_V |Dw_i|^2 - \delta \int_U |Du^I_i|^2 + \delta \int_F |u_i|^2\\
 &\qquad\qquad = \sum_k \left[ \frac{(\lambda_k+1) (1-\sigma^m-\delta) +m\delta}{m} - \mu_k\, \frac{1+ \sigma^{2\mu_k}}{1-\sigma^{2\mu_k}}\right] a_{i,k}^2\, .
\end{split}
\]
Now, recalling that $\lambda_k \geq \kappa^{-1}$ by \eqref{e:eigen_3}, we have from \eqref{e:exp=eigen} that $\frac12 \lambda_k \leq \mu_k^2 \leq \lambda_k$ provided $\kappa$ is chosen sufficiently small. Hence, if $\delta,\sigma \leq \frac{1}{2(m+1)}$ we can estimate
\[
\frac{(\lambda_k +1)(1-\sigma^m-\delta)+m\delta}{m} - \mu_k \, \frac{1+ \sigma^{2\mu_k}}{1-\sigma^{2\mu_k}}
\geq \frac{\lambda_k}{(m+1)} - 3\,\sqrt{\lambda_k}\, .
\]
Clearly, the latter expression is positive if $\kappa$ is small enough depending on $m$. We thus conclude the existence of a $\kappa_0=\kappa_0(m) >0$ such that
\[
\frac{(\lambda_k +1)(1-\sigma^m-\delta)+m\delta}{m} - \mu_k \, \frac{1+ \sigma^{2\mu_k}}{1-\sigma^{2\mu_k}}\geq 0 
\qquad
\mbox{if $\delta\leq  \frac{1}{2(m+1)}$, $\sigma \leq \frac{1}{2(m+1)}$ and $\kappa \leq \kappa_0$.}
\]
In particular, summing over $i$ and recalling that $F = F_j^+$ (or, analogously, $F=F_j^-$) we conclude
that, for $j\geq 2,\, \delta \leq \frac{1}{2(m+1)},\, \sigma \leq \frac{1}{2(m+1)}$, and $\kappa \leq \kappa_0$,
\begin{equation}\label{e:small_domains_inequality}
\int_{U_j^\pm\cap B_1 \setminus \overline{B_\sigma}} |Du^I|^2 - \int_{U_j^\pm \cap B_1 \setminus \overline{B_\sigma}} |Dw|^2 \geq \delta \left(\int_{U_j^\pm} |Du^I|^2 - \int_{F_j^\pm} |u|^2\right)\, .
\end{equation}

\medskip

{\bf Step 2.} We now fix $F = F_1^+$ (or analogously $F= F_1^-$) and again define the domains $U$ and $V$, the functions $u_1 \leq \ldots \leq u_Q$ and denote by:
\begin{itemize}
\item $\lambda_k=\lambda_k(F)$ the eigenvalues of the Dirichlet Laplacian;
\item $\mu_k=\mu_k(F)$ the positive numbers defined through the relation \eqref{e:exp=eigen};
\item $h_k=h_k(F)$ an orthonormal base of eigenfunctions;
\item $a_{i,k}=a_{i,k}(F)$ the Fourier coefficients of one fixed $u_i$, as in \eqref{e:Fourier}.
\end{itemize}
Again we wish to extend $\left.u_i\right|_F$ to $w_i$ on $V$. This time the extension is done so that:
\begin{equation}\label{e:boundaries}
w_i (x) = u_i (x) \quad \mbox{and} \quad w_i (\sigma x) = \sigma a_{i,1} h_1 (x) \quad \forall x\in F
\end{equation} 
and
\begin{equation}\label{e:boundaries2}
\left.w_i\right|_{\partial V \setminus (F\cup \sigma F)} \equiv 0\, .
\end{equation}
The explicit formula is then given, in polar coordinates, by 
\[
w_i (r, \theta) = \underbrace{a_{i,1}\, r \, h_1 (\theta)}_{:=\psi} + \underbrace{\sum_{k\geq 2} a_{i,k} (A_k r^{\mu_k} - B_k r^{\mu_k}) h_k (\theta)}_{:=\varphi}\, , 
\]
where the coefficients $A_k$ and $B_k$ are given again by the formulas \eqref{e:computation_Ak_Bk}. Note that $\varphi$ is harmonic and vanishes on $\partial V \setminus F$, while $\psi$ is $1$-homogeneous. Using orthogonality of the eigenfunctions $h_k$, it is immediate to check that $\int_V |Dw_i|^2 = \int |D\psi|^2 + \int |D\varphi|^2$. We can thus use the computations of the previous step to conclude that
\[
\int_{V} |Dw_i|^2 = \frac{(\lambda_1+1) (1-\sigma^m)}{m} a_{i,1}^2 + \sum_{k\geq 2} \mu_k \,\frac{1+ \sigma^{2\mu_k}}{1-\sigma^{2\mu_k}}\, a_{i,k}^2\, .
\]
As for the $1$-homogeneous extension $u_i^I$ of $u_i$ to $U$, the formula \eqref{e:energy_1_hom} remains valid. 

We now conclude the step by summing over $i$ and writing the result in each of the two different regions $F_1^\pm$. For this reason we introduce the notation $a_{i,k}^\pm$, $h_k^\pm$, $\lambda_k^\pm$ and $\mu_k^\pm$, which is selfexplanatory, as well as $a_k^\pm = \left(a^\pm_{1,k}, \ldots , a^\pm_{Q,k} \right) \in \R^Q$. We then deduce the identity
\begin{align}
\int_{U^\pm_1\cap B_1 \setminus \overline{B_\sigma}} |Dw|^2 &= \frac{(\lambda^\pm_1+1) (1-\sigma^m)}{m} \, \abs{a^\pm_1}^2 + \sum_{k\geq 2} \mu^\pm_k\, \frac{1+ \sigma^{2\mu^\pm_k}}{1-\sigma^{2\mu^\pm_k}}\, \abs{a_k^\pm}^2\label{e:corona_inequality}
\end{align}

\medskip

{\bf Step 3} We now wish to extend $w$ to $B_\sigma$. Observe that 
\[
w (\sigma x) = \left\{
\begin{array}{ll}
\left(\sum_i \a{\sigma a^\pm_{i,1} h_1^\pm (x)}, \pm 1 \right) \qquad &\mbox{if $x\in F_1^\pm$}\\
Q \a{0} \qquad &\mbox{ if $x\not\in F_1^+\cup F_1^-$.}
\end{array}\right.
\]
Using the notation $a^\pm_1 \in \R^Q$ introduced above, consider on $\partial B_\sigma$ the function
\[
\tilde{u} (\sigma x) := \sigma \, \abs{a^+_1} \, h_1^+ (x)\, \mathbf{1}_{F_1^+} (x) - \sigma\, \abs{a^-_1}\, h_1^- (x)\, \mathbf{1}_{F_1^-} (x)\, ,
\]
where we assume that $h_1^\pm$ are positive on $F_1^\pm$: recall that, being $h_1^\pm$ an eigenfunction for the first eigenvalue of the Dirichlet Laplacian on $F_1^\pm$, it does not vanish in $F_1^\pm$. Next, let $\mathfrak h$ be the harmonic extension of $\tilde u$ to $B_\sigma$. Observe that $\{\mathfrak h>0\}\cap B_\sigma$ must necessarily consist of one connected component. Indeed, $\sigma\,F_1^+$ is clearly contained in the boundary of a single connected component of $\{\mathfrak h > 0\} \cap B_\sigma$; should there be a distinct connected component $E$, $\mathfrak h$ would be then identically equal to zero on $\partial E$, thus violating the maximum principle. Likewise, $\{\mathfrak h<0\} \cap B_\sigma$ consists of a single connected component, too. It is thus immediate to verify that the following is a well-defined map in $W^{1,2} (B_\sigma, \Iqspecl)$ which extends $w|_{\partial B_\sigma}$:
\[
w (x) =
\left\{
\begin{array}{ll}
\left(\sum_i \a{{\textstyle{\frac{a_{i,1}^+}{|a_1^+|}}} \mathfrak h (x)}, +1\right) \qquad & \mbox{if $\mathfrak h (x) >0$}\\ \\
\left(\sum_i \a{-{\textstyle{\frac{a_{i,1}^-}{|a_1^-|}}} \mathfrak h (x)}, -1\right) \qquad &\mbox{if $\mathfrak h (x) <0$}\\ \\
Q \a{0} \qquad &\mbox{otherwise}\,.
\end{array}\right.
\] 
Observe that 
\begin{equation} \label{e:sono uguali}
\int_{B_\sigma} |Dw|^2 = \int_{B_\sigma} |D\mathfrak h|^2\, ,
\end{equation}
Now, the Weiss' epiperimetric inequality for classical harmonic functions (which the reader can easily prove expanding $\tilde{u}$ in spherical harmonics and using their harmonic extensions) implies the existence of a positive $\delta_*$ such that
\begin{align}
\int_{B_\sigma} |D\tilde{u}^I|^2 - \int_{B_\sigma} |D\mathfrak h|^2 &\geq \delta \left(\int_{B_\sigma} |D \tilde{u}^I|^2 - \int_{\partial B_\sigma} |\tilde{u}|^2\right)
\qquad \forall \delta < \delta_*\, .
\end{align}
On the other hand we can explicitly compute
\[
\int_{B_\sigma} |D\tilde{u}^I|^2  - \int_{\partial B_\sigma} |\tilde{u}^I|^2 = \sigma^m \left(\abs{a_1^+}^2\left(\frac{\lambda^+_1+1}{m} -1 \right) + \abs{a_1^-}^2\left(\frac{\lambda^-_1+1}{m}-1\right)\right)
\]
Thus we achieve
\begin{equation}\label{e:inner_inequality}
\int_{B_\sigma} |Du^I|^2 - \int_{B_\sigma} |Dw|^2 \geq \delta\,  \sigma^m \left(\abs{a_1^+}^2\left(\frac{\lambda^+_1+1}{m} -1 \right) + \abs{a_1^-}^2\left(\frac{\lambda^-_1+1}{m}-1\right)\right)
\qquad \forall \delta\leq \delta_*\, ,
\end{equation}

where we have used that $\int_{B_\sigma} \abs{Du^I}^2 \ge \int_{B_\sigma} \abs{D \tilde u^I}^2$ together with \eqref{e:sono uguali}.

\medskip

{\bf Step 4} Combining \eqref{e:inner_inequality} with \eqref{e:corona_inequality} and \eqref{e:small_domains_inequality}, we conclude that, if $\sigma\leq \frac{1}{2(m+1)}$, $\delta_1, \delta_2 \leq \delta_*$ and $\kappa\leq \kappa_0$, then
\begin{align}
\int_{B_1} |Du^I|^2 - \int_{B_1} |Dw|^2
\geq & \delta_1 \sum_{j\geq 2} \left(\int_{U_j^+\cup U_j^-} |Du^I|^2 - \int_{F_j^+\cup F_j^-} |u|^2\right)\nonumber\\
& + \delta_2 \,  \sigma^m \left(\abs{a_1^+}^2\left(\frac{\lambda^+_1+1}{m} -1 \right) + \abs{a_1^-}^2\left(\frac{\lambda^-_1+1}{m}-1\right)\right)\nonumber\\
&+ \sum_{k\geq 2} \sum_{\epsilon =+,-}   |a_k^\epsilon|^2\left(\frac{(\lambda^\epsilon_k+1)(1-\sigma^m)}{m} -\mu^\epsilon_k \frac{1+ \sigma^{2\mu^\epsilon_k}}{1-\sigma^{2\mu^\epsilon_k}}\right)\, ,
\end{align}
where, we recall, $\lambda_k^\varepsilon,\,\mu_k^\varepsilon$, and $a_k^\varepsilon$ are relative to the region $F_1^\varepsilon$. Now, using the formula
\[
\mu_k^\pm = \frac{2-m + \sqrt{(m-2)^2 +4\, \lambda_k^\pm}}{2}
\]
it can be readily checked that there are $\bar \kappa, \bar \sigma$ and $\bar \delta$ positive such that
\[
\mu_k^\pm \frac{1+ \sigma^{2\mu^\pm_k}}{1-\sigma^{2\mu^\pm_k}} \leq  \frac{(\lambda^\pm_k+1)(1-\sigma^m-\delta)+m\delta}{m} \qquad \forall k\geq 2
\]
as soon as 
\begin{equation}\label{e:conditions_2}
\delta<\bar\delta, \sigma < \bar\sigma\quad \mbox{and} \quad \kappa < \bar\kappa\, .
\end{equation}
Since the latter implies,
\[
\frac{(\lambda^\pm_k+1)(1-\sigma^m)}{m} -\mu^\pm_k \frac{1+ \sigma^{2\mu^\pm_k}}{1-\sigma^{2\mu^\pm_k}}
\geq \delta \left(\frac{\lambda^\pm_k+1}{m}-1\right)\, ,
\]
we conclude the existence of a $\sigma^\star$, $\kappa^\star$ and $\delta^\star$ such that, for $\delta_1, \delta_2, \delta_3<\delta^\star$, $\kappa < \kappa^\star$ and $\sigma<\sigma^\star$,
\begin{align*}
\int_{B_1} |Du^I|^2 - \int_{B_1} |Dw|^2
\geq & \delta_1 \sum_{j\geq 2} \left(\int_{U_j^+\cup U_j^-} |Du^I|^2 - \int_{F_j^+\cup F_j^-} |u|^2\right)\nonumber\\
& + \delta_2 \, \sigma^m \left(|a_1^+|^2\left(\frac{\lambda^+_1+1}{m} -1 \right) + |a_1^-|^2\left(\frac{\lambda^-_1+1}{m}-1\right)\right)\nonumber\\
&+ \delta_3 \sum_{k\geq 2} \left( |a_k^+|^2 \left(\frac{\lambda^+_k+1}{m}-1\right) + |a_k^-|^2 \left(\frac{\lambda^+_k+1}{m}-1\right)\right)\, .
\end{align*}
We thus fix $\sigma< \sigma^\star$ and $\kappa < \kappa^\star$ and set $\delta_3 = \delta_2 \sigma^m = \delta_1 < \delta^\star$. Taking into consideration that
\[
\int_{U_1^\pm} |Du^I|^2 - \int_{F_1^\pm} |u|^2 = \sum_{k\geq 1} |a_k^\pm|^2 \left(\frac{\lambda^\pm_k+1}{m}-1\right)\, ,
\]
we thus conclude the desired inequality
\[
\int_{B_1} |Du^I|^2 - \int_{B_1} |Dw|^2 \geq \delta_1 \left(\int_{B_1} |Du^I|^2  - \int_{\partial B_1} |u|^2\right)\, .
\]

\section{Decay and regularity at points with frequency $1$}

The main point of this section is to use the decay of the Weiss functional (Proposition \ref{p:Weiss}) to conclude the uniqueness of tangent maps at points where the frequency is $1$.

\begin{proposition}\label{p:decay}
Let $u\in W^{1,2} (B_2, \Iqspecl)$ be as in Assumption \ref{ass:main} with $u(0) = Q \a{0}$, $I=I_0 (0) =1$ and $D (2) \leq C H (1) = C$ for some positive real number $C$. Let $\alpha (m)$ and $\varepsilon (m,C)$ be the constants of Proposition \ref{p:Weiss}, and assume in addition that $W_0 (1) \leq
\bar{\varepsilon} \, H (1) \leq \varepsilon \, H(1)$. Then, if $\bar \varepsilon$ is sufficiently small depending only on $m$, there is a unique $1$-homogeneous $\Dir$-minimizing function $u_0\in W^{1,2}_{{\rm loc}} (\mathbb R^m, \Iqspecl)$ and positive constants $\bar C (m,C)$, $\beta (m)$ such that
\begin{equation}\label{e:uniform_decay}
\|\cG_s (u, u_0)\|_{C^0 (B_r)} \leq \bar C r^{1+\beta}\qquad \forall r\in (0,1]\, .
\end{equation}
Moreover, recalling that $L := \sing (u_0)$ is a hyperplane (cf. \cite{DLHMS_linear}), we have
\begin{equation}\label{e:flattening}
\dist_\Ha (\sing (u) \cap B_r, L \cap B_r) \leq \bar C r^{\beta}\, ,
\end{equation}
where $\dist_\Ha$ denotes Hausdorff distance.
\end{proposition}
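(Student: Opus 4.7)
The strategy is to combine Proposition \ref{p:Weiss} with the classification of one-homogeneous $\Dir$-minimizers in Lemma \ref{l:homogeneous} to produce a unique tangent $u_0$ with a quantitative $L^2$ rate, and then to upgrade this rate to the uniform bound \eqref{e:uniform_decay} via the Lipschitz regularity of Corollary \ref{c:Lipschitz}. The Hausdorff estimate \eqref{e:flattening} will follow by comparing the non-degeneracy of $u_0$ with \eqref{e:uniform_decay}.

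First I would apply Proposition \ref{p:Weiss} at the scale $r=1$ to conclude $W_0(r)\leq r^\alpha W_0(1)$ for all $r\in(0,1]$, which requires only $\bar\varepsilon\leq\varepsilon(m,C)$. The identity $\tfrac{d}{ds}(s^{-m-1}H(s))=2s^{-1}W_0(s)$ from \cite[Proposition 9.3]{DLHMS_linear}, integrated from $s$ to $1$ using this decay together with $W_0(1)\leq\bar\varepsilon H(1)$, gives $s^{-m-1}H(s)\geq(1-2\bar\varepsilon/\alpha)H(1)\geq\tfrac{1}{2}H(1)$ for every $s\in(0,1]$, provided $\bar\varepsilon$ is small enough. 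This non-degeneracy of $H$ is indispensable in order to normalize the scale-invariant rescalings $v_r(x):=u(rx)/r$, which satisfy $D(1,v_r)=r^{-m}D(r)$, $H(1,v_r)=r^{-m-1}H(r)$, and hence $W_0(1,v_r)=W_0(r,u)$.

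The core of the proof is the uniqueness of the tangent, which is the main obstacle. By the previous step, $D(1,v_r)$ and $H(1,v_r)$ are bounded above and below uniformly in $r$. Compactness of $\Dir$-minimizers then gives that any subsequential $W^{1,2}$-limit $\tilde u$ of $v_{r_k}$, $r_k\downarrow 0$, is $\Dir$-minimizing with $H(1,\tilde u)\geq\tfrac12 H(1)>0$; from $W_0(1,v_{r_k})=W_0(r_k,u)\to 0$, the frequency characterization \cite[Theorem 9.2]{DLHMS_linear}(iv) forces $I_{0,\tilde u}\equiv 1$, so $\tilde u$ is one-homogeneous, and Lemma \ref{l:homogeneous} classifies it in the form \eqref{e:1-homogeneous}. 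Independence of the subsequence is then obtained via the standard Cauchy estimate, bounding the $L^2(\partial B_1)$-distance between $v_r$ and $v_{r/2}$ by an integral of the ``angular'' derivative of $s\mapsto v_s$, which in turn is controlled by $\sqrt{W_0(s)/s}$, since $W_0(s)$ quantifies the $W^{1,2}(B_1)$-deviation of $v_s$ from its one-homogeneous extension (this is exactly what identity \eqref{e:W'} encodes). The decay $W_0(s)\leq s^\alpha W_0(1)$ renders the series of successive distances geometrically convergent, producing a unique tangent $u_0$ together with $\|v_r-u_0\|_{L^2(B_1)}\leq \bar C\,r^{\alpha/2}$.

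Finally, the uniform bound \eqref{e:uniform_decay} follows by interpolation: Corollary \ref{c:Lipschitz} applied to $v_r$ (which has uniformly controlled Dirichlet energy in view of the bounds from step one) and the explicit Lipschitz form of $u_0$ give uniform Lipschitz bounds on $B_1$, and interpolating these against the $L^2$ rate yields $\|\cG_s(v_r,u_0)\|_{L^\infty(B_1)}\leq \bar C\,r^\beta$ for some dimensional $\beta\in(0,\alpha/2)$, which after unwinding gives \eqref{e:uniform_decay}. For the Hausdorff estimate \eqref{e:flattening}, the explicit form \eqref{e:1-homogeneous} yields, in suitable coordinates in which $L=\{x_1=0\}$, the non-degenerate lower bound $\cG_s(u_0(x),Q\a{0})\geq c|x_1|$. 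Combined with \eqref{e:uniform_decay}, this forces $u(x)\neq Q\a{0}$ whenever $|x|\leq r$ and $|x_1|\geq Cr^\beta$, so $\sing(u)\cap B_r$ lies in the $Cr^\beta$-neighborhood of $L$. Conversely, since $u_0$ takes values in the $+1$ and $-1$ components of $\Iqspecl$ on opposite sides of $L$, the $C^0$ proximity of $u$ to $u_0$ forces $u$ to pass through $Q\a{0}$ on every short transverse segment crossing $L\cap B_r$, producing singular points within $Cr^\beta$ of each point of $L\cap B_r$ and closing \eqref{e:flattening}.
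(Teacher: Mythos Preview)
Your proposal tracks the paper's argument closely: Weiss decay, non-degeneracy of $s^{-m-1}H(s)$, a Cauchy estimate on the rescalings $v_r$, unique $L^2$-tangent with a power rate, interpolation to $C^0$, and the two-sided Hausdorff containment of $\sing(u)\cap B_r$ against $L\cap B_r$.

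One step should be adjusted. For the interpolation you invoke Corollary \ref{c:Lipschitz}, whose hypothesis is $I_{x}(0)\geq 1$ at \emph{every} singular point of $u$; the assumptions of Proposition \ref{p:decay} give only $I_0(0)=1$ and say nothing about the frequency at the other singular points in $B_2$, so Corollary \ref{c:Lipschitz} is not directly applicable to the $v_r$. The paper instead interpolates against the unconditional H\"older bound $\|u_r\|_{C^\gamma}\leq C$ of \cite[Theorem 8.1]{DLHMS_linear}, which already suffices for \eqref{e:uniform_decay} and keeps the proposition logically independent of the integer-frequency classification. (In the inductive scheme the Lipschitz bound does become available once Theorem \ref{thm:main3} is assumed in dimension $m$, so your variant would work in every application of the proposition made in the paper; it just does not prove the proposition as stated.) A smaller remark: in the $\mathscr{A}_Q$-setting the Cauchy inequality $\int_{\partial B_1}\cG_s(u_r,u_s)^2\leq C r^\alpha$ needs the observation that if the sign component of $t\mapsto u_t(x)$ flips on $(s,r)$ one splits the path at intermediate times where $u_t(x)=Q\a{0}$; the paper spells this out explicitly, and it is a genuine feature of the special $Q$-valued setting not visible in your sketch.
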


A simple corollary of the above proposition is the following.

\begin{corollary}\label{c:reg}
There exists $\alpha=\alpha(m)\in\left(0,1\right)$ with the following property. Let $u\in W^{1,2} (\Omega, \Iqspecl)$ be as in Assumption \ref{ass:main} such that $\sing (u) = \{x: I_{x,u} (0) = 1\}$. Then, $\sing (u)$ is a $C^{1,\alpha}$ submanifold of $\Omega$, and at every point $x\in \sing (u)$ there is a unique $1$-homogeneous non-trivial tangent function $u_x$, i.e. $u_x$ is locally $\Dir$-minimizing in $\R^m$ and
\begin{equation}
  \lim_{r\downarrow 0} \frac{1}{r^{m+2}} \int_{B_r (x)} \mathcal{G}_s (u(y), u_x (y-x))^2\, dy = 0\, .  
\end{equation}
\end{corollary}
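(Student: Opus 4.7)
The plan is to apply Proposition \ref{p:decay} at each singular point to obtain a unique $1$-homogeneous tangent with decay, derive H\"older continuity of the approximating tangent hyperplane from the decay estimate, and then deduce $C^{1,\alpha}$ regularity by a standard parametrization argument. First, fix $x_0 \in \sing(u)$. Because $I_{x_0}(0) = 1$ and every blowup at $x_0$ is a $1$-homogeneous $\Dir$-minimizer of frequency $1$, for which the Weiss functional vanishes identically, we have $W_{x_0}(r) = o(H_{x_0}(r))$ and $I_{x_0}(r) \to 1$ as $r\downarrow 0$. In particular, for some $\rho = \rho(x_0)>0$ the rescaling $v(y) := u(x_0 + \rho y)$ satisfies $W_{0,v}(1) \leq \bar\varepsilon\, H_{0,v}(1)$ and $D_{0,v}(2) \leq C\, H_{0,v}(1)$, i.e., the hypotheses of Proposition \ref{p:decay}. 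Applying that proposition yields a unique $1$-homogeneous tangent $u_{x_0}$ together with the pointwise decay \eqref{e:uniform_decay} and the flatness estimate \eqref{e:flattening} for $\sing(u)$ near $x_0$.

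The integral uniqueness of the tangent function stated in the corollary is then immediate from \eqref{e:uniform_decay}: integrating the pointwise bound on $B_r(x_0)$ and dividing by $r^{m+2}$ yields a quantity controlled by $C\, r^{2\beta}$, which tends to $0$. By Lemma \ref{l:homogeneous}, the singular set of $u_{x_0}$ is a hyperplane $L_{x_0}$ through the origin, and \eqref{e:flattening} shows that $\sing(u) \cap B_r(x_0)$ is close, in an appropriately scaled Hausdorff sense, to the affine hyperplane $x_0 + L_{x_0}$.

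H\"older continuity of the map $x \mapsto L_x$ now follows from a standard two-ball argument. Given $x, y \in \sing(u)$ nearby, set $r = 2|x-y|$ and observe that both $x + L_x$ and $y + L_y$ approximate the same set $\sing(u)$ inside $B_r(x) \cap B_r(y)$, each with error of order $r^{1+\beta}$; elementary geometry then yields a bound of the form $\mathrm{dist}(L_x, L_y) \leq C|x-y|^\beta$ (measured, e.g., via the difference of orthogonal projections). Combining (i) this H\"older dependence of the tangent hyperplane with (ii) the quantitative approximation of $\sing(u)$ by $x + L_x$ at each scale produces the $C^{1,\alpha}$ regularity (with $\alpha = \beta$), either by invoking a Reifenberg-type theorem for sets with H\"older-continuous tangent planes or, more directly, by parametrizing $\sing(u)$ locally as a graph over $L_{x_0}$ and verifying that the graph map has H\"older-continuous differential via the decay/flatness estimates.

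The main technical point is the \emph{uniformity} of the application of Proposition \ref{p:decay} on compact subsets of $\sing(u)$: a priori both the threshold scale $\rho$ and the constants in \eqref{e:uniform_decay}, \eqref{e:flattening} depend on the point. This uniformity is obtained from the compactness of $\Dir$-minimizers, strong $W^{1,2}_{\rm loc}$ convergence to blowups, and upper semicontinuity of the frequency, together with the standing hypothesis $\sing(u) = \{x : I_{x,u}(0) = 1\}$, which rules out the appearance of higher-frequency points under limits. Once this uniformity is secured, the constants in the H\"older estimate for $x \mapsto L_x$ are locally uniform and the $C^{1,\alpha}$ conclusion follows at once.
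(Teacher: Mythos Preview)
Your proposal is correct and follows essentially the same route as the paper: apply Proposition \ref{p:decay} at each singular point to get a unique $1$-homogeneous tangent together with the flatness estimate \eqref{e:flattening}, and then invoke the classical fact that a uniform scale-invariant hyperplane approximation yields $C^{1,\beta}$ regularity.

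The one place where the paper is more direct than your sketch is the uniformity step. Rather than appealing to compactness of minimizers and upper semicontinuity of the frequency, the paper simply observes that once a scale $r_0$ is found at the base point $x_0$ with $W_{x_0,u}(r_0) \leq \tfrac{\bar\varepsilon}{2}\,r_0^{-m-1} H_{x_0,u}(r_0)$ and $D_{x_0,u}(2r_0) \leq 2^{m+1}\,r_0^{-1} H_{x_0,u}(r_0)$, the same inequalities (with slightly worse constants, still admissible in Proposition \ref{p:decay}) persist for all nearby centers $x$ at the \emph{same} fixed scale $r_0$, just by continuity of $x\mapsto W_{x,u}(r_0)$, $D_{x,u}(2r_0)$, $H_{x,u}(r_0)$. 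This immediately gives a uniform radius and uniform constants in \eqref{e:flattening} on a whole neighborhood, without any compactness or blow-up argument. Your two-ball argument for the H\"older dependence of $L_x$ is a correct way to unpack what the paper simply calls ``a classical fact.''
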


\subsection{Proof of Proposition \ref{p:decay}} Recalling \eqref{e:der_H} and Proposition \ref{p:Weiss} we have
\[
\frac{d}{ds} \left(\frac{H(s)}{s^{m+1}} \right)= \frac{2\,W(s)}{s} \leq \bar \varepsilon\, s^{\alpha-1} \qquad \mbox{for every $0 < s < 1$}\, .
\]
Integrating the above inequality between $0$ and $1$ and recalling that $H(1) =1$, if $\bar{\varepsilon}$ is sufficiently small (depending on $\alpha$) we conclude the existence of $H_0 \in [\frac{1}{2},1]$ such that
\begin{equation}
H_0 = \lim_{r\downarrow 0} \frac{H(r)}{r^{m+1}}\,.
\end{equation}
Integrating between $0$ and $r$ then yields
\begin{equation} \label{e:decay for H}
0 \leq \frac{H(r)}{r^{m+1}} - H_0 \leq \frac{\bar\varepsilon}{\alpha}\,  r^\alpha \qquad \mbox{for every $0 < r < 1$}\,.
\end{equation} 
Next, we use the identity $2 D(r) = H'(r) - (m-1) \frac{H(r)}{r}$ (see \cite[Proposition 9.3]{DLHMS_linear}) to write
\[
2 \frac{D (r)}{r^m} - 2 H_0 = r\,\frac{d}{dr} \left(\frac{H(r)}{r^{m+1}}\right) + \frac{2H(r)}{r^{m+1}} - 2 H_0\, ,
\]
thus concluding
\begin{equation} \label{e:decay for D}
0 \leq \frac{D(r)}{r^m} - H_0 \leq \frac{2}{\alpha} \bar\varepsilon r^\alpha \qquad \mbox{for every $0 < r < 1$}\,.
\end{equation}
The monotonicity of $r \mapsto I(r)$ together with \eqref{e:decay for H} and \eqref{e:decay for D} then easily give
\[
    0 \le I(r) - 1  = \frac{D(r)}{r^m}\,\frac{r^{m+1}}{H(r)} - 1 \le \left( H_0 + \frac{2}{\alpha} \bar\varepsilon r^\alpha \right) \, \frac{1}{H_0} - 1 \leq C (\alpha)\, \bar\varepsilon\, r^\alpha\, ,
\]
where we have used that $H_0 \geq \frac{1}{2}$. Introduce now the rescaled functions
\[
u_r (x) := \frac{u(rx)}{r}\, .
\]
We claim that, for $0 < s < r$, it holds
\begin{equation} \label{conto monotonia frequency}
\int_{\partial B_1} \cG_s (u_r, u_s)^2 \leq 2\,(r-s) \int_s^r \frac{1}{2t} \frac{d}{dt} \left(\frac{D(t)}{t^m} - H_0\right)\, dt\, .
\end{equation}
Towards the proof of \eqref{conto monotonia frequency}, fix $x\in \partial B_1$, and write
\[
u_s(x) = \left( S, \varepsilon(S) \right)\,, \qquad u_r(x) = \left( R,\varepsilon(R) \right)\,,
\]
with $S,R \in \mathcal{A}_Q(R)$. If $\varepsilon(R)=\varepsilon(S)$, then 
\[
\begin{split}
\cG_s(u_r(x),u_s(x))^2 &= \cG(u_r(x),u_s(x))^2 \leq \sum_{i=1}^Q \left| \int_s^r \frac{d}{dt}\left(\frac{u_i(tx)}{t}\right) \, dt \right|^2 \\ 
&\leq (r-s)\,\sum_{i=1}^Q \int_{s}^r \left| \frac{d}{dt}\left(\frac{u_i(tx)}{t}\right)\right|^2\,dt\,.
\end{split}
\]
If, instead, $\varepsilon(R) = -\varepsilon(S)$ then, by continuity of the map $t \in \left(s,r\right) \mapsto u_t(x) \in \Iqspecl$, there exist $\tau_1,\tau_2 \in \left(s,r\right)$ with $s \leq \tau_1 \leq \tau_2 \leq r$ such that $u_{\tau_k}(x)=Q\a{0}$ for $k=1,2$ and, writing $u_t(x)=\left(T, \varepsilon(T)\right)$, one has 
\begin{align*}
    & \varepsilon(T)=\varepsilon(S) \qquad  \mbox{for every  $t\in \left( s, \tau_1 \right)$}\,, \\
      & \varepsilon(T)=\varepsilon(R) \qquad \mbox{for every  $t\in \left(\tau_2,r \right)$}\,.
\end{align*}
In particular, then
\[
\begin{split}
\cG_s(u_r(x),u_s(x))^2 &\leq 2\,\cG(u_s(x),u_{\tau_1}(x))^2 + 2\,\cG(u_{\tau_2}(x), u_r(x))^2 \\
&\leq 2\, (r-s) \sum_{i=1}^Q \int_s^r \left| \frac{d}{dt} \left( \frac{u_i(tx)}{t} \right) \right|^2\,dt\,,
\end{split}
\]
as above. The estimate \eqref{conto monotonia frequency} is then obtained by following the same computations of \cite[(5.9)]{DLS_Qvfr}, having care of using the variation formulas \cite[(7.7)-(7.8)]{DLHMS_linear} valid in the setting of special $Q$-valued functions on $m$-dimensional domains.

Next, integrating \eqref{conto monotonia frequency} by parts as in \cite[Section 5.2.1]{DLS_Qvfr} we reach the estimate
\[
\int_{\partial B_1} \cG_s (u_r, u_s)^2 \leq C (\alpha, m)\, \bar\varepsilon \, r^\alpha \qquad \mbox{for all $s\leq r$}\, .
\]
In turn, the latter implies that $u_r$ converges in $L^2 (B_1)$ to a unique map $u_0$, and indeed we conclude the decay rate
\[
\|\cG_s (u_r, u_0)\|_{L^2 (B_1)} \leq C (\alpha, m)\, \bar\varepsilon\, r^\alpha\, .
\]
Since 
\[
\int_{B_1} |Du_r|^2 = \frac{D(r)}{r^m}\, ,
\]
the uniform bound on the Dirichlet energy and \cite[Theorem 8.1]{DLHMS_linear} imply the existence of constants $\gamma (m)$, $C (m)$ such that
\[
\|u_r\|_{C^\gamma} \leq C\, .
\]
Consider now the function $\xi (x) := \cG_s (u_r (x), u_0 (x))$ and fix a point $y\in B_{1/2}$. Observe that 
\[
|\xi(y)|\leq C |y-z|^\gamma + |\xi (z)|
\]
and averaging over a ball of radius $\rho$ centered at $y$ we achieve
\[
|\xi (y)|\leq C \rho^\gamma + \frac{C}{\rho^{m/2}} \|\xi \|_{L^2 (B_\rho(y))} \leq C \rho^\gamma + C \bar\varepsilon \frac{r^\alpha}{\rho^{m/2}}\, .
\]
Clearly, choosing $\rho$ to be an appropriate power of $r$, we conclude that \[\|\cG_s(u_r,u_0)\|_{C^0 (B_{1/2})} \leq \bar C r^\beta\] for some $\beta (\alpha, \gamma,m) >0$. In turn this implies \eqref{e:uniform_decay}. 

Next, the $1$-homogeneity of $u_0$ is already proved in \cite{DLHMS_linear} and thus $\sing (u_0) = L$ is a hyperplane by Lemma \ref{l:homogeneous}. Without loss of generality assume that $L = \{x_1=0\}$.  Moreover, consider the coefficients $a_i, b_i$ as in \eqref{e:1-homogeneous}. Since $1\geq H_0 \geq \frac{1}{2}$, 
\begin{equation}\label{e:two-sided-H}
\frac{1}{2} \leq \int_{\partial B_1} |u_0|^2 \leq 1\, .
\end{equation}
Hence a simple compactness argument shows the existence of a geometric constant $\bar{c} (m) >0$ such that
\begin{equation}\label{e:si_stacca}
\min \{\max |b_i|, \max |a_i|\} \geq \bar{c} (m)\, .
\end{equation}
Otherwise there would be a sequence of $1$-homogeneous minimizers for which 
\[
\lim_{i\to \infty}\min \{\max |b_i|, \max |a_i|\} =0
\] 
and \eqref{e:two-sided-H} holds, which up to subsequences would converge to a nontrivial $1$-homogeneous minimizer which vanishes identically on one of the half spaces $\{\pm x_1 \geq 0\}$, which is a contradiction. Now, \eqref{e:si_stacca} implies that
\[
\cG_s (Q \a{0}, u_0 (x)) \geq \bar{c} (m) |x_1|\, .
\]
Since $\sing (u) = \{x: u (x) = Q \a{0}\}$, \eqref{e:uniform_decay} clearly implies that
\begin{equation}\label{e:Hausdorff_one_side}
\sing (u) \cap B_r \subset \{x: |x_1| \leq \bar{C} r^{1+\beta}\}\, ,
\end{equation}
for some constant $\bar{C} (\beta, m)$. On the other hand, using the notation $\Om^+$ and $\Om^-$ of \eqref{d:pm}, we also conclude the existence of a constant $\bar{C}$ such that
\[
(\pm \bar{C} r^{1+\beta}, \bar x) \in \Om^\pm \qquad \mbox{for all $\bar x \in \mathbb R^{m-1}$ with $|\bar x|\leq r$.}
\]
Now, by continuity of $u$, the latter means that, for every $\bar x\in \mathbb R^{m-1}$ with $|\bar x| \leq r$ there is a $\tilde{x} \in [-\bar{C} r^{1+\beta}, \bar{C} r^{1+\beta}]$ such that $u (\tilde x, \bar x) = Q \a{0}$. The latter, combined with \eqref{e:Hausdorff_one_side}, implies \eqref{e:flattening}. \qed

\subsection{Proof of Corollary \ref{c:reg}} Fix $x\in \sing (u)$ with $I_{x}(0)=1$. First assume without loss of generality that $x=0$, and apply the rescaling
\[
v_r (y) := C (r) \,  u (ry)\, ,
\]
where the constant $C (r)$ is chosen so that $H_{v_r,0} (1) =1$. Since, up to subsequences, $v_r$ converges strongly in $W^{1,2}_{{\rm loc}} (B_2)$ to a $1$-homogeneous Dir-minimizer we then conclude that 
\begin{align*}
&\lim_{r\downarrow 0} W_{v_r,0} (1) =0\,,\\
&\lim_{r\downarrow 0} D_{v_r,0} (2) = 2^m\, .
\end{align*}
For a sufficiently small $r$ we can then apply Proposition \ref{p:decay} to conclude that $v_r$ has a unique tangent function at $0$. Obviously this proves the uniqueness of the tangent function to $u$ at $0$.

We wish now to show the existence of a radius $r$ such that $B_r (x) \cap \sing (u)$ is a $C^{1,\beta}$ submanifold. Since $I_{x,u} (0)=1$, 
\[
\lim_{r\downarrow 0} \frac{r D_{x,u} (2r)}{H_{x,u} (r)} = 2^m\, .
\] 
Fix the constant $\bar \varepsilon$ for $C=2^{m+3}$ in Proposition \ref{p:decay} and observe that, since $I_{x,u}(0)=1$, for a sufficiently small $r$ we also have
\[
W_{x,u} (r) \leq \frac{\bar \varepsilon}{2} \, \frac{H_{x,u} (r)}{r^{m+1}}\, .
\]
Fix thus an $r$ such that the latter condition holds and such that $D_{x,u} (2r) \leq 2^{m+1} r^{-1} H_{x,u} (r)$. 

Now, as above consider the rescaled functions
\[
u_r (y) := u (x+ry)\, 
\]
and normalize them to a $v_r$ so to have $H_{v_r,0}(1) =1$. Observe that $\sing (v_r)$ is a $C^{1,\beta}$ submanifold in $B_\rho (0)$ if and only if
$\sing (u) \cap B_{\rho r} (x)$ is a $C^{1,\beta}$ submanifold. Thus, by a slight abuse of notation, keep denoting $v_r$ by $u$.

Observe that we have $D_{0,u} (2) \leq 2^{m+1} H_{0,u}(1) $ and $W_{0,u} (1) \leq \frac{\bar \varepsilon}{2} H_{0,u}(1)$. By continuity, 
\[
D_{x,u} (2) \leq 2^{m+2} H_{x,u} (1) \qquad \mbox{and} \qquad W_{x,u} (1) \leq \bar \varepsilon H_{x,u} (1)
\]
for all $x$ in a neighborhood of $0$. Since, by assumption, every $x\in \sing (u)$ has $I_{x,u} (0) =1$, we can then apply -- modulo re-normalizing the function appropriately as above -- Proposition \ref{p:decay} at each point $x$ in a neighborhood of $0$, and hence we find a unique tangent function at each $x$ and  
an affine hyperplane $L_x$ (which corresponds to the singular set of the tangent function) passing through $x$ such that
\begin{equation}\label{e:flattening1}
\dist_{\Ha} (\sing (u) \cap B_r(x), L_x \cap B_r(x)) \leq \bar C r^{\beta}\, \qquad \forall x\in \sing (u) \cap B_1\, , 
\end{equation}
for constant $\bar C$ and $\beta$ which are independent of the point $x$. 
It is a classical fact that the latter estimate implies the $C^{1,\beta}$ regularity of $\sing (u) \cap B_{1/2}$.

\section{Proof of the Theorems \ref{thm:main}, \ref{thm:main2} and \ref{thm:main3}} \label{s:linear_proofs}

As already mentioned, the proof will be by induction on the dimension $m$. The case $m=1$ has been established in Section \ref{ss:base} and hence we just need to show the inductive step. The key is to prove Theorem \ref{thm:main3}, as explained in the following corollary.

\begin{corollary}\label{c:reg2}
Let $m\geq 1$ be a dimension for which Theorem \ref{thm:main3} holds. Then Theorem \ref{thm:main} and Theorem \ref{thm:main2} hold true in dimension $m$, and additionally they hold true in dimension $m+1$ when $u$ is homogeneous.
\end{corollary}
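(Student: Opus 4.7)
The plan is to combine Theorem \ref{thm:main3} in dimension $m$ with the regularity results already established (Corollaries \ref{c:Lipschitz} and \ref{c:reg}, Proposition \ref{p:decay}, Lemma \ref{l:homogeneous}) to obtain the full structural statements in dimension $m$, and then to lift everything to the $(m+1)$-dimensional homogeneous case by exploiting the extra translational symmetry which every non-origin blow-up inherits from the ambient homogeneity.

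First I would address Theorem \ref{thm:main2} in dimension $m$. At any $x \in \sing(u)$, the rescalings $u_{x,r_k}$ admit, along a subsequence, a strong $W^{1,2}_{loc}$ limit $u_\infty$ by the compactness of $\Dir$-minimizers. The limit is a non-trivial $\Dir$-minimizer on $\R^m$ with $\bfeta \circ u_\infty \equiv 0$, positively $I_x(0)$-homogeneous. Theorem \ref{thm:main3}(a) in dimension $m$ then forces $I_x(0)$ to be a positive integer. To identify $\sing_{m-1}(u)$ with $\{I_x(0)=1\}$ I would argue as follows: if $I_x(0)=1$ then $u_\infty$ is $1$-homogeneous and Lemma \ref{l:homogeneous} shows that $\sing(u_\infty)$ is a hyperplane, so $x$ lies in the top-dimensional stratum; conversely, if $I_x(0) \geq 2$, Theorem \ref{thm:main3}(b) identifies $\sing(u_\infty)$ with the zero locus of a harmonic polynomial of degree $\geq 2$, whose singular part has dimension at most $m-2$.

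For Theorem \ref{thm:main} in dimension $m$, part (a) is then immediate from Corollary \ref{c:Lipschitz}. For part (b), I would set $\Sigma(u) := \{x \in \sing(u)\,\colon\,I_x(0) \geq 2\}$, which is closed by upper semicontinuity of $x \mapsto I_x(0)$, and $\sing_{m-1}(u) := \sing(u) \setminus \Sigma(u)$, which is a $C^{1,\alpha}$ $(m-1)$-submanifold by Corollary \ref{c:reg}. The dimension bound $\dim_\Ha \Sigma(u) \leq m-2$ (respectively the discreteness for $m=2$) follows from a standard Federer reduction: a failure would produce a blow-up with translational symmetry along an $(m-1)$-dimensional subspace $V$ and frequency $\geq 2$ at the origin; slicing by $V^\perp$ would yield a homogeneous $\Dir$-minimizer on $\R$ of frequency $\geq 2$, which is impossible by the one-dimensional base case treated in Section \ref{ss:base}. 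For part (c), around any $x \in \sing_{m-1}(u)$ each connected component $E_\pm$ of $B_r(x) \setminus \sing_{m-1}(u)$ is a regular domain on which Remark \ref{r:sheeting} produces the harmonic decomposition $u|_{E_\pm} = \bigl(\sum_i \a{u_i^\pm}, \pm 1\bigr)$; since each $u_i^\pm$ vanishes continuously on the $C^{1,\alpha}$ boundary $\sing_{m-1}(u) \cap B_r(x)$, boundary Schauder estimates for harmonic functions with zero Dirichlet data give $u_i^\pm \in C^{1,\alpha}$ up to the boundary. Statement (c2) is the strict/coincident dichotomy of Remark \ref{r:sheeting}, and the Dirichlet condition \eqref{e:Dirichlet_condition} is automatic. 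To obtain the transmission condition \eqref{e:transmission_condition}, at each $y \in \sing_{m-1}(u) \cap B_r(x)$ the unique $1$-homogeneous blow-up has the form provided by Lemma \ref{l:homogeneous}, which forces $|a|=|b|$; the $C^{1,\alpha}$ decay of Proposition \ref{p:decay} combined with the $C^{1,\alpha}$ regularity of the sheets up to the boundary transfers this equality to the pointwise identity of $\sum_i |\nabla u_i^\pm|^2$ on $\sing_{m-1}(u)$.

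Finally, for the $(m+1)$-dimensional homogeneous case, let $u$ be a non-trivial $\alpha$-homogeneous $\Dir$-minimizer on $\R^{m+1}$ with $\bfeta \circ u \equiv 0$. At any $x_0 \in \sing(u) \setminus \{0\}$, the tangent function inherits both homogeneity (of some degree $I_{x_0}(0)$) and, from the ambient homogeneity of $u$, translational invariance along the line $\R x_0$; it therefore descends to an $m$-dimensional homogeneous $\Dir$-minimizer, to which Theorem \ref{thm:main3} in dimension $m$ applies and yields that $I_{x_0}(0)$ is a positive integer. All the arguments of the previous steps now run verbatim on $\R^{m+1} \setminus \{0\}$, with the Federer reduction shifted by one dimension (an $m$-dimensional translational symmetry again reducing to the forbidden $1$-dimensional case); the homogeneity of $u$ extends the conclusions through the origin as a cone. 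The main obstacle I foresee is the upgrade of the pointwise blow-up equality $|a|=|b|$ to the transmission identity \eqref{e:transmission_condition} on the sheet gradients themselves, since $|\nabla u|^2$ is a priori discontinuous across $\sing_{m-1}(u)$; this requires the sharp power-law decay of Proposition \ref{p:decay} coupled with boundary Schauder estimates to control the difference between the traces of the sheet gradients at the free boundary and the coefficient vectors $a,b$ of the unique tangent map. A secondary subtlety is that in the $(m+1)$-dimensional homogeneous case the frequency at the origin equals $\alpha$ itself, whose integrality is not a consequence of this corollary but of the subsequent Theorem \ref{thm:main3} in dimension $m+1$; the conclusions here should therefore be read as structural statements on $\R^{m+1} \setminus \{0\}$ that are then extended conically to the origin by homogeneity.
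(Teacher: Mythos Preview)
Your proposal is essentially correct and follows the same overall architecture as the paper: blow-up plus Theorem \ref{thm:main3}(a) for integrality of $I_x(0)$, Federer reduction for the dimension bound on $\Sigma(u)$, Corollaries \ref{c:Lipschitz} and \ref{c:reg} for Lipschitz and $C^{1,\alpha}$ regularity, and the translational symmetry along $\R x_0$ to reduce the $(m+1)$-dimensional homogeneous case to dimension $m$.

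The one substantive difference is your derivation of the transmission condition \eqref{e:transmission_condition}. The paper obtains it directly from the inner variation identity of \cite[Proposition 7.1]{DLHMS_linear}: once the sheets $u_i^\pm$ are $C^{1,\alpha}$ up to the $C^{1,\alpha}$ free boundary, one can test the inner variation against vector fields supported near $\sing_{m-1}(u)$ and read off $\sum_i|\nabla u_i^+|^2=\sum_i|\nabla u_i^-|^2$ as the resulting jump relation. You instead pass through the blow-up equality $|a|=|b|$ of Lemma \ref{l:homogeneous} and transfer it back via the $C^{1,\alpha}$ convergence of Proposition \ref{p:decay}. This does work (since the tangential derivatives of $u_i^\pm$ vanish on the zero set, the coefficients $a_i,b_i$ of the unique tangent coincide with the boundary traces of $\partial_\nu u_i^\pm$, so $|a|^2=\sum_i|\nabla u_i^+|^2$ and $|b|^2=\sum_i|\nabla u_i^-|^2$ pointwise), but the inner-variation route is cleaner and dissolves the ``obstacle'' you flag. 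Your closing observation that the integrality of $I_0(0)=\alpha$ at the origin in the $(m+1)$-dimensional homogeneous case is not yet available here is accurate and matches the paper's usage: in the subsequent proof of Theorem \ref{thm:main3}, only $x\neq 0$ is invoked from this corollary.
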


\begin{proof}[Proof of Corollary \ref{c:reg2}] Observe that $\alpha$ is a possible value for $I_{x,u}(0)$ with $u$ $\Dir$-minimizing on $\Omega \subset \R^m$ if and only if $\alpha$ is a possible degree of homogeneity of a $\Dir$-minimizing map on $\R^m$. For each homogeneous $\Dir$-minimizer $v$, we define its \emph{building dimension} as the maximal number of linear independent directions $\xi \in \R^m$ with the property that $u (x+\xi) = u (x)$ for every $x$. Applying the arguments of \cite[Section 3.6.2]{DLS_Qvfr},  we find that if $u$ is a $\Dir$-minimizer on a domain $\Omega \subset \mathbb R^{m}$, then for every $\delta>0$, at $\mathcal{H}^{m-2+\delta}$-a.e. $x\in \sing (u)$ there is a tangent function with building dimension $m-1$ (and when $m=2$ the exceptional points are in fact isolated). Such a tangent function, by the classification of $1$-dimensional $\Dir$-minimizers established in Section \ref{ss:base}, is necessarily $1$-homogeneous. Hence, setting 
\[
\sing_{m-1}(u):=\{x: I_{x,u} (0) =1\}\,, \qquad \Sigma(u) := \sing(u) \setminus \sing_{m-1}(u)
\]
the following is generally valid (even without knowing the range of the frequency function):
\[
\mathcal{H}^{m-2+\delta} (\Sigma (u) ) =0 \qquad \mbox{for $\delta >0$ and all $\Dir$-minimizers on $\Omega\subset \mathbb R^m$}\,
\]
and
\[
\Sigma (u) \qquad\mbox{is discrete in dimension $m=2$.}
\]
If in a certain dimension $m$ we know that the degree of any homogeneous $\Dir$-minimizer must be an integer, then, for any $\Dir$-minimizer $u$ on a domain $\Omega \subset \R^m$, $\{x: I_{x,u} (0)=1\}$ is relatively open in $\sing (u)$ by upper semicontinuity of the map $x\mapsto I_{x,u} (0)$. Thus by Corollary \ref{c:reg} we conclude that $\sing_{m-1}(u)$ is a $C^{1, \beta}$ submanifold. On the other hand, the same conclusion applies equally well if $u$ is defined on $\mathbb R^{m+1}$ and it is homogeneous, because in that case all tangent functions to $u$ at a point $x\neq 0$ must be invariant along the direction $\frac{x}{|x|}$ and they must therefore have the degree of an homogeneous minimizer on $\mathbb R^m$. This takes care of conclusion (b) of Theorem \ref{thm:main}.

Next we show that conclusion (c) of Theorem \ref{thm:main} holds as well. (c1) and (c2) follow from the regularity theory of \cite{DLHMS_linear}, while the Dirichlet condition \eqref{e:Dirichlet_condition} is a consequence of $\sing (u) = \{x: u (x) = Q\a{0}\}$. As for the transmission condition \eqref{e:transmission_condition}, observe that $\sing (u) \cap B_r (x)$ is $C^{1,\beta}$ and each $u_i^\pm$ is a harmonic function which vanishes on $\sing (u)$. Hence $u_i^\pm$ is in fact $C^{1,\beta}$ up to the boundary $\sing (u)$. Equation \eqref{e:transmission_condition} follows then from the inner variation identity of 
\cite[Proposition 7.1]{DLHMS_linear}. 
\end{proof}

\subsection{Proof or Theorem \ref{thm:main3}} To conclude the induction stated in Proposition \ref{p:inductive}, we only need to show that Theorem \ref{thm:main3} holds in dimension $m+1$ if it holds in dimension $m$.

\begin{proof}[Proof of Theorem \ref{thm:main3}]
Assuming therefore that Theorems \ref{thm:main}, \ref{thm:main2}, \ref{thm:main3} hold for a given dimension $m$, the aim is to show that the classification result of Theorem \ref{thm:main3} holds in dimension $m+1$. Fix therefore an $I$-homogeneous $\Dir$-minimizer $u\in W^{1,2}_{{\rm loc}} (\mathbb R^{m+1}, \Iqspecl)$. As already observed in the proof of Corollary \ref{c:reg2}, the value of the frequency function for such $u$ is an integer at every $x\in \sing (u)\setminus \{0\}$, and hence Corollary \ref{c:Lipschitz} implies its Lipschitz regularity. Next, we know from Corollary \ref{c:reg2} that $\sing_m (u)=\{x \, \colon \, I_{x,u}(0)=1\}$ is a $C^{1,\alpha}$ submanifold of $\R^{m+1}$ of dimension $m$, and that, setting as usual $\Sigma=\Sigma (u) = \sing(u) \setminus \sing_m(u)$, $\mathcal{H}^{m-1+\delta} (\Sigma) =0$ for every $\delta >0$. Consider now 
any connected component $F$ of $\partial B_1 \setminus \sing (u)$. Recalling the regularity theory, $u|_F = \left(\sum_i \a{u_i}, \epsilon (F)\right)$ for some $\epsilon (F) \in \{+1,-1\}$ and functions $u_1 \leq u_2 \leq \ldots \leq u_Q$ which are not all identically zero and restrictions to $F$ of $I$-homogeneous harmonic functions defined on the cone $\{\lambda x: x\in F, \lambda >0\}$. We therefore conclude that each (non-zero) $u_i$ is an eigenfunction of the Dirichlet Laplacian on the domain $F$. We claim that each $u_i$ does not change sign. To see this, observe first that, since $\etab \circ u = 0$ and $F \cap \sing(u) = \emptyset$, $u_1$ is strictly negative on $F$ and $u_Q$ is strictly positive on $F$. In particular, $u_1$ and $u_Q$ are eigenfunctions of the Dirichlet Laplacian on $F$ which do not change sign, and a well-known fact in spectral theory implies then that the corresponding eigenvalue is in fact the first, which has a $1$-dimensional eigenspace. There are therefore a unique function $\varphi_F$ (which for simplicity we normalize as positive) and constants $a_1(F) < 0 < a_Q(F)$ such that $u_1 = a_1\,\varphi_F$ and $u_Q=a_Q\,\varphi_F$. We next show that $u_i$ does not change sign also for $i\in \{2,\ldots,Q-1\}$, thus implying, for the same reason, that $u_i=a_i\,\varphi_F$ for some $a_i=a_i(F) \in \R$. By contradiction, let $k_1,\,k_2 \in \{2,\ldots,Q-1\}$ be, respectively, the first and the last integer so that $u_{k_j}$ changes sign, and let \[\bar u = \sum_{1\le i < k_1} u_i + \sum_{Q \ge i > k_2} u_i = \left(\sum_{i\notin\left[k_1,k_2\right]} a_i \right) \varphi_F\,.\] 
Now, if $\bar u \ge 0$ then evidently $u_{k_1}$ cannot be strictly positive at any point in $F$, for otherwise $\bfeta \circ u$ would be strictly positive there; analogously, if $\bar u \leq 0$ then $u_{k_2}$ cannot be strictly negative anywhere on $F$. Having thus proved that $u_i = a_i\,\varphi_F$ for every $i$, we next set $a(F) := \left( a_1(F), \ldots, a_Q (F) \right) \in \R^Q$, and consider the function
\[
p (x) :=
\left\{
\begin{array}{ll}
|a (F)|\varphi_F (x) \qquad &\mbox{if $x \in F$ and $\epsilon (F) =1$}\\
-|a (F)| \varphi_F (x) \qquad &\mbox{if $x\in F$ and $\epsilon (F) =-1$}\\
0&\mbox{otherwise.}
\end{array}\right.
\]
Clearly $p$ is Lipschitz and it is $I$-homogeneous. Moreover it is harmonic on $\mathbb R^{m+1} \setminus \sing (u)$. We will next show that $p$ is in fact harmonic on the whole space $\mathbb R^{m+1}$. This will imply that $p$ is a classical harmonic polynomial and thus all the claims of Theorem \ref{thm:main3} follow at once.

First of all we show that $p$ is harmonic on $\mathbb R^{m+1}\setminus \Sigma$. Consider indeed a point $x\in \sing_{m} (u)$ and, using the regularity of $\sing_{m} (u)$, fix $F^+$ and $F^-$ with $\epsilon (F^\pm)= \pm 1$ such that $x\in \partial F^+\cap \partial F^-$. Again by regularity we know that for some neighborhood $U$ of $x$, $p$ is $C^{1,\alpha}$ up to the boundary on both $U\cap F^+$ and $U\cap F^-$. Denote by $Dp^\pm (x)$ the differential on the respective sides. If $\tau$ is any direction which is tangent to $\sing_{m} (u)$, then clearly $D_\tau p^\pm (x) =0$. Let now $\nu$ be the exterior unit normal to $F^+$ at $x$. Note that \eqref{e:transmission_condition}
ensures $|\frac{\partial p^+}{\partial \nu} (x)|=|\frac{\partial p^-}{\partial \nu} (x)|$ and the definition of $p$ given above ensures that $\frac{\partial p^+}{\partial \nu} (x)$ and $\frac{\partial p^-}{\partial \nu} (x)$ have the same sign. We thus conclude that $p$ is harmonic on $\mathbb R^{m+1}\setminus \Sigma$. 

Next we conclude showing that, since $\mathcal{H}^{m} (\Sigma) = 0$ and $p$ is Lipschitz, then $p$ is harmonic on the whole space. Indeed, fix a smooth and compactly supported test function $\varphi$ and a positive number $\delta$ and let $B_{\rho_i} (x_i)$ be a finite cover of the compact set $\Sigma \cap {\rm spt}\, (\varphi)$ with the property that $\rho_i < \delta$ and 
\[
\sum_i \rho_i^m \leq \delta\, .
\] 
For each $i$ let $\chi_i$ be a bump function which vanishes on $B_{\rho_i} (x_i)$, is identically equal to $1$ on $\mathbb R^{m+1}\setminus B_{2\rho_i} (x_i)$ and satisfies the estimate
\[
0\leq \chi_i \leq 1 \qquad \mbox{and} \qquad \|\nabla \chi_i\|_{C^0}\leq \frac{C}{\rho_i}\, .
\]
Define $\varphi_\delta := \varphi \prod_i \chi_i$ and observe that
\[
\int \nabla p \cdot \nabla \varphi_\delta = 0
\] 
since $\varphi_\sigma$ is supported in $\mathbb R^{m+1}\setminus \Sigma$. Next estimate
\begin{align*}
\left|\int \nabla p \cdot (\nabla \varphi - \nabla \varphi_\delta)\right| & \leq C \|\nabla \varphi\|\|\nabla p\|_0 \sum_i \rho_i^{m+1} + \|\varphi\|_0 \|\nabla p\|_0 
\sum_i \int_{B_{2\rho_i}(x_i)} |\nabla \chi_i|\\
&\leq C \|\varphi\|_1 \|\nabla p\|_0 \sum_i (\rho_i^{m+1} + \rho_i^m) \leq C \delta \|\varphi\|_1 \|\nabla p\|_0\, .  
\end{align*}
Letting $\delta\downarrow 0$ we thus conclude that $\int \nabla \varphi \cdot \nabla p=0$ and the arbitrariness of $\varphi$ implies the harmonicity of $p$. 
\end{proof}

\newpage

\part{Nonlinear theory}

\section{Improved excess decay}\label{s:improved-decay}

In this section we prove Proposition \ref{p:decay-improved}. We first recall the following useful result.

\begin{lemma} \label{l:height bound}
Let $T$, $q$, and $\rho$ be as in Proposition \ref{p:decay-improved}. Then for every $r \leq \frac{\rho}{8}$
\begin{equation}\label{e:height bound}
\sup_{x \in \bC_{r}(q,\pi(q)) \cap \spt(T)} |\mathbf{p}_{\pi(q)^\perp}(x)| \leq C r \left( \bE^{no}(T,\bB_{8r}(q),\pi(q)) + r^2\bA^2 \right)^{\sfrac12} \,.
\end{equation}
In particular, 
\begin{equation}\label{e:inclusion}
    \bC_{r}(q,\pi(q)) \cap \spt(T) \subset \bB_{2r}(q)\,.
\end{equation}
\end{lemma}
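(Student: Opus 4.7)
The plan is to bootstrap the ball height decay of Theorem~\ref{t:uniqueness-tangent-plane} into the cylindrical statement~\eqref{e:height bound} and to derive the inclusion~\eqref{e:inclusion} along the way via a simple Pythagorean dichotomy. The hypothesis $\bC_\rho(q,\pi(q))\cap\spt^p(T)\subset \bB_{2\rho}(q)$ provides the rough initial cylinder-ball inclusion at the coarsest scale; this is propagated to every finer scale $r\leq\rho/8$ using the smallness of the excess.

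The first step is to apply Theorem~\ref{t:uniqueness-tangent-plane} at scale $4\rho$: choosing $\varepsilon_1\leq\varepsilon_0$, the hypothesis~\eqref{e:smallness-2} yields, for every $s\leq 4\rho$,
\[
\bh(T,\bB_s(q),\pi(q))\leq Cs\Bigl(\tfrac{s}{4\rho}\Bigr)^{\alpha/2} E^{1/2},\qquad \bE^{no}(T,\bB_s(q),\pi(q))\leq C\Bigl(\tfrac{s}{4\rho}\Bigr)^{\alpha}E,
\]
where $E:=\bE^{no}(T,\bB_{4\rho}(q))+(4\rho)^2\bA^2\leq\varepsilon_1$.

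Next, to prove \eqref{e:inclusion}, I would fix $r\leq\rho/8$ and $x\in\bC_r(q,\pi(q))\cap\spt^p(T)$. Since $x\in\bB_{2\rho}(q)$ by hypothesis, $R:=|x-q|\leq 2\rho$; if $R\leq r$ there is nothing to prove, so I may assume $R>r$. Pairing $q$ and $x$ in the definition of $\bh$ and invoking the ball height bound at scale $R$ gives
\[
|\mathbf{p}_{\pi(q)^\perp}(x-q)|\leq \bh(T,\bB_R(q),\pi(q))\leq CR\,\varepsilon_1^{1/2}.
\]
Choosing $\varepsilon_1$ small enough that $C\varepsilon_1^{1/2}<\tfrac{1}{2}$ and using $|\mathbf{p}_{\pi(q)}(x-q)|\leq r$, Pythagoras then gives $R^2\leq r^2+R^2/4$, hence $R<2r$.

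Finally, to derive \eqref{e:height bound}, I would re-apply Theorem~\ref{t:uniqueness-tangent-plane} at the better-adapted scale $8r$. The decay above ensures $\bE^{no}(T,\bB_{8r}(q))+(8r)^2\bA^2\leq C\varepsilon_1<\varepsilon_0$, and $\pi(q)$ remains the unique tangent plane there. The conclusion at target radius $2r$ then reads
\[
\bh(T,\bB_{2r}(q),\pi(q))\leq Cr\bigl(\bE^{no}(T,\bB_{8r}(q),\pi(q))+r^2\bA^2\bigr)^{1/2};
\]
by \eqref{e:inclusion}, every $x\in\bC_r(q,\pi(q))\cap\spt^p(T)$ lies in $\bB_{2r}(q)$, so pairing $x$ with $q$ yields~\eqref{e:height bound}. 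The main delicate point is the apparent circularity between the last two steps---the inclusion is proved using a height bound and is then used to sharpen the height bound---but this is resolved because the crude estimate $CR\varepsilon_1^{1/2}$ already closes the Pythagorean argument, independently of the target excess at scale $8r$.
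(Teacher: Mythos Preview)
Your proof is correct and takes a genuinely different route from the paper's. The paper argues in the opposite order: it first establishes the cylindrical height bound~\eqref{e:height bound} by combining Allard's $L^\infty$--$L^2$ height estimate (valid for any stationary varifold) with a Poincar\'e-type inequality that bounds the $L^2$ height by the cylindrical excess (the latter using that $q$ has density $Q$); only then does it plug in the excess decay from Theorem~\ref{t:uniqueness-tangent-plane} to deduce~\eqref{e:inclusion}. Your argument bypasses both external ingredients entirely, relying only on the ball height decay already supplied by Theorem~\ref{t:uniqueness-tangent-plane}: you first close the inclusion~\eqref{e:inclusion} via the Pythagorean dichotomy, and then bootstrap the ball height at scale $8r$ through that inclusion to obtain~\eqref{e:height bound}. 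This is arguably cleaner and more self-contained within the paper, though the paper's route has the minor conceptual advantage that~\eqref{e:height bound} is derived from varifold estimates that do not themselves require the full power of the uniqueness theorem. One cosmetic remark: when you invoke $\bh(T,\bB_R(q),\pi(q))$ with $R=|x-q|$, the point $x$ sits on the boundary of the open ball; simply replace $R$ by any $R'\in(R,4\rho)$ to keep the pairing literal.
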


\begin{proof}
Without loss of generality, suppose $q=0$, and let $\pi_0 = \pi(0)$. We shall simply write $\bC_r$ for $\bC_r(0,\pi_0)$. First, we recall the following $L^\infty - L^2$ estimate: 
\begin{equation}\label{Linfty-L2}
    \sup_{x\in \bC_r \cap \spt (T)} |\mathbf{p}_{\pi_0}^\perp(x)|^2 \leq C r^2 \left( r^{-(m+2)} \int_{\bC_{2r}} |\mathbf{p}_{\pi_0}^\perp(x)|^2\,d\|T\|(x) + r^2\bA^2 \right)\,.
\end{equation}
The validity of \eqref{Linfty-L2} is a simple consequence of the stationarity of the varifold $\|T\|$, so that the coordinates on the support of $T$ satisfy an elliptic PDE: the argument is due to Allard (see \cite[Theorem (6)]{Allard72}), and a proof can be found, for instance, in \cite[Lemma 1.7]{Spolaor}. Next, since $q=0$ is a point of density $Q=\frac{p}{2}$ we have by the Poincar\'e inequality (cf. e.g. \cite[Lemma 1.8]{Spolaor}) that
\begin{equation}\label{poincare}
\begin{split}
    r^{-(m+2)} \int_{\bC_{2r}} |\mathbf{p}_{\pi_0}^\perp(x)|^2\,d\|T\|(x) &\le C\,\left( \bE (T,\bC_{4r}(0,\pi_0)) + r^2\bA^2 \right)\\
    &\leq C\,\left( \bE^{no} (T,\bC_{8r}(0,\pi_0)) + r^2\bA^2 \right)\,,
    \end{split}
\end{equation}
where $\bE(T,\bC_{4r}(0,\pi_0)):= (4\omega_m r)^{-1} \|T\|(\bC_{4r}(0,\pi_0)) - Q$, and where in the last inequality we have used \cite[Theorem 16.1]{DLHMS}. The estimate \eqref{e:height bound} is then an immediate consequence of \eqref{Linfty-L2} and \eqref{poincare}. Since $8r \leq \rho$, we can then apply \eqref{e:decay} to gain 
\[
\sup_{x\in \bC_r \cap \spt (T)} |\mathbf{p}_{\pi_0^\perp}(x)| \leq C r \frac{r^{\alpha/2}}{\rho^{\alpha/2}} \left( \bE^{no}(T,\bB_\rho) + \rho^2\bA^2 \right)^{1/2} \leq C r \varepsilon_1^{1/2}\,,
\]
which implies \eqref{e:inclusion} as soon as $\eps_1$ is sufficiently small.
\end{proof}

\begin{proof}[Proof of Proposition \ref{p:decay-improved}]
We prove the decay in \eqref{e:decay-quadratic}, since \eqref{e:decay-height} is a simple consequence of the former and \eqref{e:height bound}. Thanks to \eqref{e:inclusion}, it is easy to check that 
\[
\bE^{no} (T, \bC_r (q, \pi (q))) \leq 2^m \bE^{no} (T, \bB_{2r} (q), \pi (q)) 
\leq 2^m \bE^{no} (T, \bC_{2r} (q, \pi (q)))\, .
\]
We will therefore aim at proving the decay with the cylindrical excess $\bE^{no} (T, \bC_\rho (q, \pi (q)))$ in place of the spherical excess $\bE^{no} (T, \bB_\rho (q), \pi (q))$.

It is sufficient to prove that there is a constant $\varepsilon_1$ such that, if $\bE^{no}(T,\bC_{4\rho} (q, \pi (q)))+(4\rho)^2\bA^2 < \varepsilon_1$, then either 
\begin{equation} \label{e:decay-quadratic-easy}
    \bE^{no} (T, \bC_{\rho/2} (q, \pi (q)) ) \leq \left( \frac12 \right)^{2-2\delta} \max \{\bE^{no} (T, \bC_\rho (q, \pi (q))), \varepsilon_1^{-1} \rho^2 \bA^2\}
\end{equation}
or
\begin{equation} \label{e:decay-quadratic-easy-all-the-way}
    \bE^{no} (T, \bC_{\rho/2} (q, \pi (q)) ) \leq \left( \frac18 \right)^{2-2\delta} \max \{\bE^{no} (T, \bC_{4\rho} (q, \pi (q))), \varepsilon_1^{-1} (4\rho)^2 \bA^2\}\,.
\end{equation}
Indeed, first observe that, as a consequence of \eqref{e:decay}, if $\bE^{no}(T,\bC_{4\rho} (q, \pi (q)))+(4\rho)^2\bA^2 < \bar\varepsilon_1$ for a suitable choice of $\bar\varepsilon_1$ then $\bE^{no}(T,\bC_{r} (q, \pi (q)))+r^2\bA^2 < \varepsilon_1$ for all $r \leq 4\rho$. Hence, the alternatives \eqref{e:decay-quadratic-easy}-\eqref{e:decay-quadratic-easy-all-the-way} imply that for any integers $h \geq 1$ and $k \geq 3$
\[
\begin{split}
&\max \{\bE^{no} (T, \bC_{2^{-(h+k)}\rho} (q, \pi (q))), \varepsilon_1^{-1} (2^{-(h+k)}\rho)^2 \bA^2\} \\
& \qquad \qquad \leq C(m) ( 2^{-k} )^{2-2\delta} \max \{\bE^{no} (T, \bC_{2^{-h}\rho} (q, \pi (q))), \varepsilon_1^{-1} (2^{-h}\rho)^2 \bA^2\}
\end{split}
\]
for a constant $C(m)$ independent of both $h$ and $k$. This shows the validity of the desired decay.

We will prove \eqref{e:decay-quadratic-easy}-\eqref{e:decay-quadratic-easy-all-the-way} by contradiction, and without loss of generality we assume that $q=0$ and $\rho =1$.  We assume at the same time that the tangent plane $\pi(0)$ to $T$ at $0$ is the horizontal plane $\pi_0$, which we identify with $\R^m \simeq \R^m \times \{0_n\} \subset \R^{m+n}$, and that $\R^{m+1} = \R^{m} \times \R \simeq \R^m \times \R \times \{0_{n-1}\} \subset \R^{m+n}$ is the tangent space to $\Sigma$ at $q=0$. Finally we use the notation $\bC_r$ for the cylinder $\bC_r (0, \pi_0)$.

The contradiction assumption is then that there exist $\delta > 0$ and sequences $T_k$ of currents and $\Sigma_k$ of manifolds as above with 
\begin{equation} \label{e:smallness hyp}
    \bE^{no} (T_k, \bC_4 ) + 4^2\bA_k^2 \to 0 \qquad \mbox{as $k \to \infty$}
\end{equation}
but for which
\begin{equation} \label{e:the contradiction}
     \bE^{no} (T_k, \bC_{\frac12} ) > \left( \frac12 \right)^{2-2\delta} \max\{\bE^{no} (T_k, \bC_1), k^2  \bA_k^2\}
\end{equation}
as well as
\begin{equation} \label{e:the contradiction-all-the-way}
     \bE^{no} (T_k, \bC_{\frac12} ) > \left( \frac18 \right)^{2-2\delta} \max\{\bE^{no} (T_k, \bC_4), k^2 4^2 \bA_k^2\}\,.
\end{equation}
Observe that \eqref{e:the contradiction-all-the-way} implies that
\begin{equation} \label{boundedness of excess ratios}
    \sup_k \frac{\bE^{no}(T_k,\bC_4) + 4^2 \bA_k^2}{\max\{\bE^{no}(T_k,\bC_1), k^2 \bA_k^2\}} \leq C(m)\,.
\end{equation}
We will now use \eqref{boundedness of excess ratios} to show that \eqref{e:the contradiction} leads to a contradiction. It is easy to check that, for all sufficiently large $k$, the currents satisfy all the assumptions of \cite[Theorem 16.1]{DLHMS}: in fact we only need to show that $(\mathbf{p}_{\pi_0})_\sharp T_k\res \bC_4 \equiv Q \a{B_4 (0, \pi_0)} \modp$. However, since $(\partial^p T_k) \res \bC_4 = 0$, the constancy theorem implies that $(\mathbf{p}_{\pi_0})_\sharp T_k\res \bC_4 \equiv c \a{B_4 (0, \pi_0)} \modp$ for some integer $c$, while the fact that $c$ can be taken equal to $Q$ follows from the assumption that $\Theta_{T_k} (0) =Q$ and $0$ is a flat point.

It follows from \cite[Theorem 16.1]{DLHMS} that there exist special multi-valued Lipschitz functions $u_k \colon B_{1} \subset \R^m \to \mathscr{A}_Q(\R^n)$ and closed sets $K_k \subset B_{1}$ such that, for some constants $\gamma, C$ depending only on $m$ and $p$,
\begin{itemize}
    \item[(i)] ${\rm Gr}(u_k) \subset \Sigma_k$ and $\Lip(u_k) \leq C (\bE^{no}(T_k,\bC_4) + 4^2\bA_k^2)^\gamma$;
    \item[(ii)] $|B_1 \setminus K_k| \leq \|T_k\|((B_1\setminus K_k) \times \R^n) \leq C (\bE^{no}(T_k,\bC_4) +4^2\bA_k^2)^{1+\gamma}$ and $\bG_{u_k} \mres (K_k \times \R^n) = T_k \mres (K_k \times \R^n) \; \modp$,
    \item[(iii)] for every fixed $0<r\leq 1$ the following inequality holds for $k$ large enough
    \[
    \left| \|T_k\|(\bC_r) - Q \omega_m r^m - \frac12\, \Dir(u_k, B_r) \right| \leq (\bE^{no}(T_k,\bC_4)+4^2\bA_k^2)^{1+\gamma}\,r^m \,.
    \]
\end{itemize}
Since, by (ii),
\[
\|T_k\|((B_r \cap K_k)\times \R^n) - Q |B_r \cap K_k| = \frac12 \, \int_{(B_r \cap K_k) \times \R^n} |\vec T_k - \pi_0|_{no}^2\,d\|T_k\|\,,
\]
(ii) and (iii) readily imply that for every $\frac12 \leq r \leq 1$
\begin{equation} \label{e:Dir vs Eno}
    \left| \frac{1}{2 \omega_m r^m} \Dir (u_k, B_r) - \bE^{no}(T_k, \bC_r) \right| \leq C (\bE^{no}(T_k,\bC_4)+4^2\bA_k^2)^{1+\gamma}\,.
\end{equation}
Hence, setting
\[
v_k (x) := \frac{u_k(x)}{\left( \max \{\bE^{no}(T_k,\bC_1), k^2\bA_k^2\}\right)^{\frac12}}\,,
\]
\eqref{e:the contradiction} and \eqref{boundedness of excess ratios} imply at the same time that 
\begin{equation} \label{e:contr on Dir}
    2^m \Dir(v_k,B_{\frac12}) > \left(\frac12 \right)^{2-2\delta}  \Dir(v_k,B_1) - C(m)\,(\bE^{no}(T_k,\bC_4)+4^2\bA_k^2)^\gamma\,
\end{equation}
and that 
\begin{equation}\label{e:contr on Dir 2}
\liminf_{k\to \infty} \Dir (v_k, B_{\frac{1}{2}}) >0\, .
\end{equation}

We next let $v$ denote a subsequential limit, as $k \to \infty$, of the functions $v_k$ in the weak topology of $W^{1,2}(B_1, \mathscr{A}_Q(\R^n))$. 

We rewrite the maps $u_k$ as $u_k = (\bar u_k (x), \Psi (x, \bar u_k (x)))$, where $\bar u_k$ takes values in $\mathscr{A}_Q (\mathbb R)$, while $\Psi_k : \mathbb R^{m+1}\to \mathbb R^{n-1}$ is the function whose graph gives the manifold $\Sigma_k$. Simple computations lead to 
\begin{equation} \label{e:Dirichlet comparison}
   \abs{\Dir (u_k, B_1) - \Dir (\bar u_k,B_1)} \leq C (\bA_k^2 + \bA_k^2 (\bE^{no} (T_k, \bC_1)))\,.
\end{equation}
In particular, if we set
\[
\bar v_k (x) := \frac{\bar u_k(x)}{\left( \max \{\bE^{no}(T_k,\bC_1), k^2\bA_k^2\}\right)^{\frac12}}\,,
\]
we immediately deduce that
\[
\abs{\Dir (v_k, B_1) - \Dir (\bar v_k,B_1)} \to 0
\]
and in particular that $v$ can be identified with the limit of $\bar v_k$, hence with a map taking values in $\mathscr{A}_Q (\mathbb R)$. 

Observe next that, by \cite[Theorem 13.3]{DLHMS} there is a Dir-minimizing map $h_k: B_1 \to \mathscr{A}_Q (\mathbb R)$ 
\[
\int \mathcal{G}_s (\bar v_k, h_k)^2 + \int ||D\bar v_k|-|Dh_k||^2 \to 0\, .
\]
In particular $v$ is the limit of $h_k$ and, by the compactness of Dir-minimizing maps, it is Dir-minimizing and its Dirichlet energy in $B_r$ is the limit of the Dirichlet energies in $B_r$ of $h_k$ (and hence of the Dirichlet energies of the maps $\bar v_k$) as long as $r<1$. From \eqref{e:contr on Dir} and \eqref{e:contr on Dir 2} we immediately conclude that
\begin{equation}\label{e:contr on Dir 3}
\int_{B_{1/2}} |Dv|^2 \geq 2^{-m-2+2\delta} \int_{B_1} |Dv|^2 > 0\, .
\end{equation}

Note next that, by \cite[Theorem 23.1]{DLHMS}, since $0$ is a multiplicity $Q$ point for the current $T$, we conclude that, for every fixed $\hat{\delta}$, there is a $\bar \sigma$ such that 
\[
\int_{B_\sigma} \mathcal{G}_s (v, Q \a{\bfeta\circ v})^2 \leq \hat{\delta} \sigma^m \qquad \forall \sigma \leq \bar{\sigma}\, .
\]
The latter (and the continuity of Dir-minimizers, cf. \cite{DLHMS_linear}) implies that $v (0) = Q \a{\bfeta \circ v (0)}$. By possibly subtracting a suitable constant, we can assume that indeed $v (0)= Q \a{0}$.

We next observe that, by Theorem \ref{t:uniqueness-tangent-plane} and (iii), there is a positive $\gamma$ such that, for every fixed $r \leq 1/2$, the following inequality holds provided $k$ is large enough:
\begin{equation} \label{eq:Dir decays fast}
    \Dir (u_k, B_{r}) \leq C  \Dir(u_k, B_{\frac12}) r^{m+2\gamma} + C \left(\bE^{no}(T_k,\bC_4) + 4^2\bA_k^2  \right)^{1+\gamma}\, .
\end{equation}
In particular, passing in the limit in $k$ we conclude that
\begin{equation}\label{e:Dir decays fast 2}
\Dir (v, B_r) \leq C \Dir (v, B_{\frac12}) r^{m+2\gamma}\, .
\end{equation}
The latter estimate implies that the frequency function $I_{v,0}(0) > 1$. Indeed, if this was not the case we would find a radius $t$ such that $I_{v,0} (t) \leq 1 + \frac{\gamma}{2}$, and \cite[(3.44)]{DLS_Qvfr} would imply, for $r \leq \frac{t}{2}$
\begin{equation}\label{e:C(h)}
\Dir(v,B_r) \geq C (v) \, r^{m+\gamma}\,,
\end{equation}
which is a contradiction to \eqref{e:Dir decays fast 2}. In fact \cite[(3.44)]{DLS_Qvfr} is proved in the paper for the ``standard'' multiple valued functions, but the derivation just uses the 
identities \cite[Proposition 3.2]{DLS_Qvfr}, which hold for Dir minimizing special $Q$-valued maps by \cite[Proposition 9.3]{DLHMS_linear}. Note moreover that the constant $C (v)$ in \eqref{e:C(h)} depends on $v$, but since the inequality holds for all positive radii $r$, clearly for sufficiently small ones \eqref{e:C(h)} still contradicts \eqref{e:Dir decays fast 2}. 

Next, Theorem \ref{thm:main2} guarantees that $I_{v,0}(0) \geq 2$. Again by \cite[(3.44)]{DLS_Qvfr} and the monotonicity of $r \mapsto I_{v,0}(0)$, we conclude that
\begin{equation} \label{eq:quadratic decay}
    \Dir(v, B_{r}) \leq  r^{m+2} \, \Dir(v,B_1) \qquad \mbox{for all $r \leq 1/2$}\,.
\end{equation}
In particular we have 
\[
\Dir (v, B_{1/2}) \leq 2^{-m-2} \Dir (v, B_1)\, .
\]
This however contradicts \eqref{e:contr on Dir 3} and hence completes the proof of \eqref{e:decay-quadratic} and of the proposition.
\end{proof}

\section{Refined Center Manifold and Proof of Proposition \ref{p:contact}} \label{s:supercm}

In this section we prove Proposition \ref{p:contact}. In order to do that we will first specify some additional conditions on the choice of parameters in \cite[Assumptions 17.5, 17.10, and 17.11]{DLHMS}.

\subsection{Additional conditions on the parameters}
Let $\delta_2$ be fixed as in \cite[Assumption 17.10]{DLHMS} and let 
\begin{equation}\label{eq:param1}
\delta:=\min\{\delta_2,1-4\delta_2\}\,.
\end{equation}
Let $\varepsilon_1>0$ be as in Proposition \ref{p:decay-improved} corresponding to this choice of $\delta$ and choose $\eta=\eta(m,p,N_0)>0$ such that
\begin{equation}\label{eq:param2}
\eta=c_g\, 2^{-N_0}\,,
\end{equation}
where $c_g>0$ is a geometric constant to be defined later and $N_0$ is the parameter in \cite[Assumption 17.11]{DLHMS}.

\begin{lemma}[Geometric decay in cubes]\label{lem:geom_decay}
Assume that $T, \Sigma$, and the parameters $\beta_2, \delta_2, M_0$ satisfy \cite[Assumptions 17.5, 17.10, and 17.11]{DLHMS}. There exist  positive constants $c_g=c_g(m,p)$, $N_0=N_0(m,p,M_0)$, $C_g=C_g(N_0,m,p)$ with the following property. Assume that $C_h$ and $C_e$ satisfy \cite[Assumption 17.11]{DLHMS}, and let, in addition, $\delta$ be as in \eqref{eq:param1}. Following the Whitney decomposition of \cite[Definition 17.12]{DLHMS} assume that for some cube $L\in \sC^j$, $j\geq N_0$, there exists $q_0=(x_0,y_0)\in \sing_f(T)\cap \bB_\eta$ with 
\begin{equation}\label{e:corona}
|x_0-x_L|\leq 128\,\sqrt{m}\,\ell(L)\,\,.
\end{equation}
Then there exists $\eps_2=\eps_2(\delta, N_0,m,p)$ such that, following the notation of \cite[Eq. (17.8)]{DLHMS}, if $\bmo\leq \eps_2^2$ and $\eta=c_g2^{-N_0}$, then
\begin{gather}
\bE^{no} (T, \B_L) \leq C_g \, \bmo\, \ell (L)^{2-2\delta_2}\,,\label{eq:geometric_decay1}\\
\bh (T, \B_L) \leq C_g \,\bmo^{\sfrac{1}{2}} \ell (L)^{1+\beta_2} \,. \label{eq:geometric_decay2}  
\end{gather}
\end{lemma}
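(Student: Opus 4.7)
My plan is to derive the estimates \eqref{eq:geometric_decay1}--\eqref{eq:geometric_decay2} by applying the almost-quadratic decay of Proposition \ref{p:decay-improved} at the flat singular point $q_0$. Theorem \ref{t:uniqueness-tangent-plane} provides the unique tangent plane $\pi(q_0)$ to $T$ at $q_0$ (provided $\bmo$ is small), while the proximity hypothesis $|x_0-x_L| \leq 128\sqrt{m}\,\ell(L)$ together with the definition of $\B_L$ as a ball of radius comparable to $\sqrt{m}\,\ell(L)$ centered at $(x_L,0)$ ensures that $\B_L \subset \bB_{r}(q_0)$, where $r := c_0\sqrt{m}\,\ell(L)$ for a geometric constant $c_0$.

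First I would fix a reference scale $\rho_0$ of order one (well inside the domain where \cite[Assumption 17.5]{DLHMS} controls the excess and curvature in terms of $\bmo$) and verify the hypotheses of Proposition \ref{p:decay-improved} at $q_0$ with initial radius $\rho_0$. The smallness condition \eqref{e:smallness-2}, namely $\bE^{no}(T,\bB_{4\rho_0}(q_0),\pi(q_0))+(4\rho_0)^2\bA^2 < \varepsilon_1$, follows by combining $\bmo \leq \eps_2^2$ with the power-rate estimate \eqref{e:decay} of Theorem \ref{t:uniqueness-tangent-plane}, which upgrades a bound on the plane-free unoriented excess at scale $\sim 1$ into a bound for the excess relative to $\pi(q_0)$ at the scale $4\rho_0$. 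The cylindrical containment $\bC_{\rho_0}(q_0,\pi(q_0))\cap \spt^p(T) \subset \bB_{2\rho_0}(q_0)$ follows from the height bound of Lemma \ref{l:height bound} applied at $q_0$, once $\eps_2$ is chosen sufficiently small.

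With these hypotheses verified, Proposition \ref{p:decay-improved} yields, for every $r \leq \rho_0/8$,
\begin{equation*}
\bE^{no}(T,\bB_r(q_0),\pi(q_0)) \leq C\,r^{2-2\delta}\,\bmo, \qquad \bh(T,\bB_r(q_0),\pi(q_0)) \leq C\,r^{2-\delta}\,\bmo^{1/2}.
\end{equation*}
Choosing $N_0=N_0(m,p,M_0)$ sufficiently large and $c_g$ sufficiently small so that $r = c_0\sqrt{m}\,\ell(L) \leq c_0\sqrt{m}\,2^{-N_0} \leq \rho_0/8$, I transfer these bounds to $\B_L \subset \bB_r(q_0)$. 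For the excess, since $\bE^{no}(T,\B_L)$ minimizes over the choice of plane, $\bE^{no}(T,\B_L) \leq \bE^{no}(T,\B_L,\pi(q_0))$, and the volume-ratio comparison between $\B_L$ and $\bB_r(q_0)$ gives $\bE^{no}(T,\B_L,\pi(q_0)) \leq C(r/\ell(L))^m\,\bE^{no}(T,\bB_r(q_0),\pi(q_0)) \leq C_g\,\bmo\,\ell(L)^{2-2\delta}$. For the height, $\bh(T,\B_L) \leq \bh(T,\B_L,\pi(q_0)) \leq \bh(T,\bB_r(q_0),\pi(q_0)) \leq C_g\,\bmo^{1/2}\,\ell(L)^{2-\delta}$. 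The specific choice $\delta = \min\{\delta_2,\,1-4\delta_2\}$ then ensures $2-2\delta \geq 2-2\delta_2$ and $2-\delta \geq 1 + 4\delta_2 \geq 1+\beta_2$ (using the standard hierarchy $\beta_2 \leq 4\delta_2$ between the parameters in \cite[Assumption 17.11]{DLHMS}), yielding exactly \eqref{eq:geometric_decay1} and \eqref{eq:geometric_decay2}.

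The main technical obstacle I anticipate is the careful verification of the cylindrical inclusion $\bC_{\rho_0}(q_0,\pi(q_0))\cap \spt^p(T) \subset \bB_{2\rho_0}(q_0)$: because $\rho_0$ is macroscopic while $\bA$ is small but nonzero, one must combine Lemma \ref{l:height bound} with Theorem \ref{t:uniqueness-tangent-plane} and ensure that $\eps_2$ is chosen to dominate both the excess and curvature contributions uniformly over all admissible positions of $q_0 \in \bB_\eta$. Secondary book-keeping issues are the correct tuning of the geometric constants $c_g$, $c_0$, and $\eps_2$, and the standard passage from the height with respect to $\pi(q_0)$ to the height used to define $\B_L$, but these are routine once the excess decay is in place.
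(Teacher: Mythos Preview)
Your strategy is the same as the paper's—apply Proposition \ref{p:decay-improved} at $q_0$ and transfer the decay to $\B_L$—but there is one nontrivial gap in your inclusion $\B_L \subset \bB_{c_0\sqrt{m}\,\ell(L)}(q_0)$. The ball $\B_L$ is not centered at $(x_L,0)$: in the construction of \cite[Section 17]{DLHMS} its center is a point $p_L=(x_L,y_L)\in\spt(T)$, and likewise $q_0=(x_0,y_0)\in\spt(T)$ has $y_0\neq 0$ in general. The horizontal hypothesis $|x_0-x_L|\le 128\sqrt{m}\,\ell(L)$ alone gives no control on $|y_L-y_0|$; indeed this vertical distance is a priori only bounded by $\eta=c_g 2^{-N_0}$, which can be vastly larger than $\ell(L)$ when $j\gg N_0$. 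So the inclusion, as you state it, fails.

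The paper closes this gap by first establishing the tilt estimate $|\pi(q_0)-\pi_0|_{no}\le C_g\,\bmo^{1/2}$ (from the excess bound at scale $2^{-N_0}$) and then combining it with the height decay \eqref{eq:height_exc_atq} to obtain the key pointwise estimate
\[
|y-y_0|\le C_g\,\bmo^{1/2}\,|x-x_0|\qquad\text{for all }(x,y)\in\spt(T)\cap\bB_{C_02^{-N_0}}(q_0),
\]
which is \eqref{eq:key_est}. Applied to $p_L$ this yields $|y_L-y_0|\le C_g\,\bmo^{1/2}\ell(L)$, and the inclusion $\B_L\subset\bB_{C_0\ell(L)}(q_0)$ follows. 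You have all the ingredients (the height decay from Proposition \ref{p:decay-improved} and the tilt estimate, which you mention in passing), but you must invoke them \emph{before} claiming the containment, not after. The same tilt estimate is also what justifies the passage $\bh(T,\B_L,\pi_L)\le \bh(T,\bB_{C_0\ell(L)}(q_0),\pi(q_0))+C\ell(L)\,|\pi_L-\pi(q_0)|_{no}$; your inequality $\bh(T,\B_L)\le\bh(T,\B_L,\pi(q_0))$ is not correct without this correction, since $\bh(T,\B_L)$ is computed with respect to the excess-optimal plane $\pi_L$, not with respect to an arbitrary plane.
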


\begin{proof} Let $\varepsilon_2$ to be suitably chosen later. We notice that, by \cite[Eq. (17.8)]{DLHMS},
\[
    \bE^{no}(T, \bB_{6\sqrt{m}}, \pi_0)+ (6\sqrt{m})^2\,\bA^2< \bmo \leq \eps_2^2 \,, 
\]
and moreover, by \cite[Lemma 17.8]{DLHMS},
\begin{equation}\label{e:preheight2}
\bh(T, \bC_{5\sqrt{m}}(0,\pi_0))\leq C_0\, \bmo^{\sfrac1{2m}} \,,
\end{equation}
so that, upon choosing $\varepsilon_2$ sufficiently small, the assumptions of Proposition \ref{p:decay-improved} are satisfied with $4\rho=6\sqrt{m}$ and we have for every $0<r\leq \sqrt{m}/8$
\begin{align}
&\bE^{no} (T, \bB_r, \pi_0) \leq \bE^{no} (T, \bC_r (0, \pi_0)) \leq C \,r^{2-2\delta}\bmo\, .
\end{align}
Then observe that, by our choice of $\eta$, we have that
\begin{align*} 
\bE^{no} (T, \B_{C_0\,2^{-N_0}}(q))
        &\leq  \bE^{no} (T, \B_{C_0\,2^{-N_0}}(q), \pi_0)\\
        &\leq C_0^{-m}(6\sqrt{m}2^{N_0})^{m} \bE^{no} (T, \bB_{6\sqrt{m}},\pi_0)\leq C_g\, \bmo\, \quad \mbox{for every $q \in \bB_\eta$},
\end{align*}
where $C_g=C_g(m,p,N_0)$ will change from line to line in the coming estimates. This implies that, up to choosing $\eps_2$ sufficiently small, depending on $N_0,m,p$, the assumptions of Theorem \ref{t:uniqueness-tangent-plane} are satisfied in $\B_{C_02^{-N_0}}(q)$ for $q \in \bB_\eta \cap \Sing_f (T)$, and so we can estimate, with the usual notation for $\pi(q)$,
\[
\bE^{no}(T,\B_{2^{-1}C_02^{-N_0}}(q), \pi(q))\leq C\, \left(\bE^{no} (T, \B_{C_0\,2^{-N_0}}(q))+(6\sqrt{m})^2\,\bA^2\right) \\ 
\leq C_g\,\bmo\,,
\]
so that
\begin{align}\label{eq:planetilt}
 |\pi(q)-\pi_0|_{no}^2 \leq C \left(\bE^{no} (T, \B_{C_0\,2^{-N_0}}(q), \pi(q))+\bE^{no} (T, \B_{C_0\,2^{-N_0}}(q), \pi_0) \right) 
 \le C_g\,\bmo\,.
\end{align}
Combining \eqref{e:preheight2} with this estimate we obtain
\begin{equation}\label{e:preheight3}
\bh(T, \bC_{C_0\,2^{-N_0}}(q,\pi(q)))\leq \bh(T, \bC_{5\sqrt{m}}(0,\pi_0))+ C\,|\pi_0-\pi(q)|_{no}\leq C_g\, \bmo^{\sfrac1{2m}}\,.
\end{equation}
Choosing $\eps_2=\eps_2(N_0, \delta,m,p)$ sufficiently small, we can then apply Proposition \ref{p:decay-improved} at every point $q\in \sing_f\cap \bB_\eta$ to obtain, for every $0<r<\sfrac{1}{32} \,C_0\,2^{-N_0}$,
\begin{gather}
    \bE^{no}(T, \bB_r(q),\pi(q))\leq \bE^{no}(T, \bC_r(q,\pi(q)))\leq C_g\,\bmo\,r^{2-2\delta}\,,\label{eq:dec_exc_atq}\\
    \bh(T, \bB_r(q),\pi(q))\leq \bh(T, \bC_r(q,\pi(q)))\leq C_g\,\bmo^{\sfrac12}\,r^{2-\delta}\,.\label{eq:height_exc_atq}
\end{gather}
In particular, combining \eqref{eq:height_exc_atq} and \eqref{eq:planetilt}, for $p=(x,y) \in \spt(T)\cap \B_{C_02^{-N_0}}(q)$ and $q=(x_q,y_q)$, we get
\begin{align*} 
|y-y_q|
    &\le |\p^\perp_{\pi_0}(p-q)|\le
    |\p^\perp_{\pi(q)}(p-q)|+|\pi_0-\pi(q)|_{no}\,|p-q|\\
    &\le C_g\,\bmo^{\sfrac12}\, |p-q|
\end{align*}
that is choosing $\eps_2$ sufficiently small
\begin{equation}\label{eq:key_est}
  |y-y_q|\leq C_g \bmo^{\sfrac12} |x-x_q|\qquad \forall p=(x,y) \in \spt(T)\cap \B_{C_02^{-N_0}}(q). 
\end{equation}

The conclusion of the proof is a straightforward consequence of \eqref{eq:dec_exc_atq}, \eqref{eq:height_exc_atq} and \eqref{eq:key_est}. Indeed combining \eqref{eq:key_est}, \eqref{e:corona} and the fact that $q\in \bB_\eta$, together with our choice of $\eta$, we have
\begin{equation}\label{eq:key_cont}
    \bB_L\subset \bB_{C_0\ell(L)}(q_0)\,.
\end{equation}
Then, using \eqref{eq:dec_exc_atq}, we have
\[
\bE^{no}(T,\bB_L) \le C_0^m \bE^{no}(T,\bB_{C_0\ell(L)}(q_0)) \le C_g\,\bmo\, \ell(L)^{2-2\delta}\,.
\]
Moreover, using this we have
\[
|\pi_L-\pi(q_0)|_{no}^2 \le C_0 (\bE^{no}(T,\bB_L) + \bE^{no}(T,\bB_L, \pi(q_0))) \le C_g\,\bmo\, \ell(L)^{2-2\delta}\,\]
so that, from \eqref{eq:height_exc_atq} we conclude
\[
\bh(T,\bB_L)\le \bh(T,\bB_{C_0\ell(L)}(q_0),\pi(q_0)) + C\,\ell(L)\,|\pi_L - \pi(q_0)|_{no}\le C_g\,\bmo^{\sfrac12}\, \ell(L)^{2-\delta}\,.
\]
In particular \eqref{eq:geometric_decay1} and \eqref{eq:geometric_decay2} are satisfied.
\end{proof}

Now we can specify the additional conditions on the parameters $N_0, C_e, C_h, \eps_2$ in the construction of the center manifold which are needed to prove Proposition \ref{p:contact}.

\begin{ipotesi}\label{a:parameters_fine_cm}
We assume that
\begin{enumerate}
    \item $N_0$ is larger than $C(\beta_2,\delta_2,M_0)$ as in \cite[Assumption 17.11]{DLHMS} and as in Lemma \ref{lem:geom_decay}.
    \item $C_e$ is larger than $C(\beta_2,\delta_2,M_0,N_0)$ as in \cite[Assumption 17.11]{DLHMS} and than $C_g$ in Lemma \ref{lem:geom_decay}.
    \item $C_h$ is larger than $C(\beta_2,\delta_2,M_0,N_0,C_e)$ as in \cite[Assumption 17.11]{DLHMS} and than $C_g$ in Lemma \ref{lem:geom_decay}.
\item  $\eps_2$ is smaller than a positive constant $c(\beta_2, \delta_2, M_0, N_0, C_e, C_h)$ and as in Lemma \ref{lem:geom_decay}.
\end{enumerate}
\end{ipotesi}

\subsection{Proof or Proposition \ref{p:contact}}

Proposition \ref{p:contact} is a straightforward consequence of our choice of the parameters in Assumption \ref{a:parameters_fine_cm} and Lemma \ref{lem:geom_decay}.

\medskip

To prove \eqref{e:fine_cm1}, suppose by contradiction that there exists $q_0=(x_0,y_0)\in \sing_f (T)\cap\bB_\eta$ such that $q\notin \Phi(\Gamma)$. Then, by definition of $\Gamma$ in the Whitney refining procedure of \cite[Definition 17.12]{DLHMS}, there exist $j\geq N_0$ and $L \in \mathscr{C}^j$ such that \eqref{e:corona} holds and
\[
L\in \sW^j=\sW_e^j\cup \sW_h^j\cup \sW_n^j \,.
\]
If $L\in \sW_e^j\cup \sW_h^j$, then the assumptions of Lemma \ref{lem:geom_decay} are clearly satisfied by $L$ and so either \eqref{eq:geometric_decay1} or \eqref{eq:geometric_decay2} contradict the stopping assumption for cubes in $W^j_e$ or $W_h^j$ respectively. If $L\in \sW^j_n$, then there must exist an ancestor $\tilde L \in W^k_e\cup W^k_h$, $N_0\leq k<j$. Then 
$$
|x_{\tilde L}-x_0|\leq 4\sqrt{m} \sum_{i=k}^j {2^{-i}}\leq 4\sqrt{m} \,\ell(\tilde L)\,
$$
so that $\tilde L$ again satisfies the assumptions of Lemma \ref{lem:geom_decay}, thus a contradiction.

\medskip

To prove \eqref{e:fine_cm2}, we assume by contradiction that there is $L\in \sW^j$ such that $\ell(L)>\frac{1}{64\sqrt{m}}\,\dist(x_0,L)$, then we have
\[
|x_L-x_0|\leq \dist(x_q,L)+2\sqrt{m}\ell(L)\leq 128\sqrt{m}\ell(L)\,,
\]
so that we are in the same assumption as in the previous part of the proof and the contradiction follows in the same way. \qed

\section{Almost monotonicity of the frequency function and blow-up} \label{s:monot}

\subsection{Proof of Proposition \ref{p:almost-monot}}
In this section we prove Proposition \ref{p:almost-monot}. The main point is that, thanks to \eqref{e:fine_cm2}, the estimates of Proposition \cite[Proposition 26.4]{DLHMS} apply to $\bH (q,r)$ and $\bD (q, r)$ whenever $q\in {\rm Sing}_f (T) \cap \bB_\eta$ and $r\leq 1$. Indeed, \eqref{e:fine_cm2} corresponds to \cite[Eq. (25.5)]{DLHMS}, which is the only condition on the radius $r$ used in the proof of \cite[Proposition 26.4]{DLHMS}. We report here the relevant estimates, and in order to do so we need to introduce the following additional quantities.

\begin{definition}\label{d:funzioni_ausiliarie}
Fix $q\in \mathcal{M}$.
We let $\partial_{\hat r}$ denote the derivative with respect to arclength along geodesics emanting from $q$ and we set
\begin{align}
&\bE (q,r) := - \int_\cM \phi'\left(\textstyle{\frac{d(p,q)}{r}}\right)\,\sum_{i=1}^Q \langle
N_i(p), \partial_{\hat r} N_i (p)\rangle\, dp\,  ,\\
&\bG (q, r) := - \int_{\cM} \phi'\left(\textstyle{\frac{d(p,q)}{r}}\right)\,d(p,q) \left|\partial_{\hat r} N (p)\right|^2\, dp\,,\\
\quad\mbox{and}\quad
&\bSigma (q,r) :=\int_\cM \phi\left(\textstyle{\frac{d(p,q)}{r}}\right)\, |N|^2(p)\, dp\, .
\end{align}
\end{definition}

We are now ready to state the counterpart of \cite[Proposition 26.4]{DLHMS} (cf. also \cite[Proposition 3.5]{DLS_Blowup}). For reasons which will become clear later, we need some additional estimates and some refined assumptions. 

\begin{proposition}[First variation estimates]\label{p:variation}
For every $\gamma_3$ sufficiently small and every positive $c_0\leq 1$ there are constants $C = C (\gamma_3, c_0)>0$ and $\varepsilon_3 (\gamma_3)>0$ with the following properties. First of all, the following estimate holds for every $q\in \mathcal{M}\cap \bB_1$ and every $r\in ]0,1]$:
\begin{equation}\label{e:H'bis}  
\left|\bH' (q,r) - \textstyle{\frac{m-1}{r}}\, \bH (q,r) - \textstyle{\frac{2}{r}}\,\bE(q,r)\right|\leq  C \bH (q,r)\, .
\end{equation}
Next the following inequalities hold for every $q\in {\rm Sing}_f (T)\cap \bB_\eta$ and a.e. $r\in ]0,1]$ such that $\bI (q,r) \geq c_0$:
\begin{align}
\left|\bD (q,r)  - r^{-1} \bE (q,r)\right| &\leq C \bD (q,r)^{1+\gamma_3} + C \eps_3^2 \,\bSigma (q,r),\label{e:out}\\
\left| \bD'(q,r) - \textstyle{\frac{m-2}{r}}\, \bD(q,r) - \textstyle{\frac{2}{r^2}}\,\bG (q,r)\right| &\leq
C \bD (q,r)+ C \bD (q,r)^{\gamma_3} \bD' (q,r)\nonumber\\
&\qquad + C r^{-1}\bD(q,r)^{1+\gamma_3},\label{e:in}\\
\bSigma (q,r) +r\,\bSigma'(q,r) &\leq C  \, r^2\, \bD (q,r)\, \leq C r^{2+m} \eps_3^{2}\,.\label{e:Sigma1}
\end{align}
Finally, the following inequality holds for every $q\in {\rm Sing}_f (T)\cap \bB_\eta$ and a.e. $r\in ]0,1]$ such that $\bI (q,r) \leq c_0^{-1}$:
\begin{equation}\label{e:out-with-H}
\left|\bD (q,r)  - r^{-1} \bE (q,r)\right| \leq C (r^{-1} \bH (q,r))^{1+\gamma_3} + C \eps_3^2 \,r \bH (q,r)\, .
\end{equation}
\end{proposition}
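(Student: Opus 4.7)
The proposal is to follow the strategy of \cite[Proposition 26.4]{DLHMS} (and the analogous \cite[Proposition 3.5]{DLS_Blowup}), with the crucial observation that the refined center manifold property \eqref{e:fine_cm2} from Proposition \ref{p:contact} provides exactly the geometric condition---namely that the cubes of the Whitney decomposition near $q$ are sufficiently small compared with $\dist(x_q,L)$---that plays the role of \cite[Eq.~(25.5)]{DLHMS}. Since that hypothesis is the only one on $q$ and $r$ actually used in the original computation, the estimates \eqref{e:H'bis}, \eqref{e:out}, and \eqref{e:in} carry over essentially verbatim. The new ingredients are the Poincar\'e-type bound \eqref{e:Sigma1} (which is a consequence of the vanishing $N(q)=Q\a{0}$) and the low-frequency variant \eqref{e:out-with-H}.

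First I would prove \eqref{e:H'bis} by differentiating $\bH(q,r)$ under the integral sign. The chain rule transforms $\phi'(d(p,q)/r)$ into a factor containing $-d(p,q)/r^2$, and an integration by parts along geodesic spheres centered at $q$ produces the principal terms $\frac{m-1}{r}\bH(q,r)$ (from the volume form on geodesic spheres in $\mathcal{M}$) and $\frac{2}{r}\bE(q,r)$ (from the radial derivative of $|N|^2$). The remainder $C\bH(q,r)$ controls the curvature-type corrections and is uniform in $r$ since $\mathcal{M}$ is $C^3$ with second fundamental form of order $\bmo^{1/2}$.

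Next, \eqref{e:out} and \eqref{e:in} follow by applying the first variation formula for $T$ against carefully chosen outer and inner test vector fields that are radial with respect to the exponential map at $q$. The main terms reproduce $\bD(q,r)-r^{-1}\bE(q,r)$ and $\bD'(q,r)-\frac{m-2}{r}\bD(q,r)-\frac{2}{r^2}\bG(q,r)$; the error terms come from the fact that $T$ differs from the graph $\bG_N$ by an exceptional set whose mass is controlled by $\bD^{1+\gamma_3}$ (see \cite[Theorem 17.20(iii)]{DLHMS} and its consequences), plus ambient curvature contributions of order $\bA\,\bSigma$. Here the smallness constant $\eps_3^2$ accounts for the size of $\bA^2$ through the standing assumption $\bmo\le\eps_2^2$, together with the uniform upper bound on the frequency $\bI(q,r)\ge c_0$ used to rewrite pure-$\bH$ error pieces in terms of $\bD$. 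Estimate \eqref{e:Sigma1} is immediate: since $q\in{\rm Sing}_f(T)\cap\bB_\eta$ we have $q\in\Phii(\Gamma)\subset\mathcal{M}$ by \eqref{e:fine_cm1} and $N(q)=Q\a{0}$, so a Poincar\'e-type inequality on geodesic balls of $\mathcal{M}$ yields $\bSigma(q,r)\le Cr^2\bD(q,r)$, and the derivative bound is obtained by the same reasoning applied to $\phi(d/r)+\textstyle{\frac{d}{r}}\phi'(d/r)$; the final $r^{m+2}\eps_3^2$ estimate follows from the $L^\infty$ control $\bD(q,r)\le C\eps_3^2 r^m$ coming from the global bound on the Dirichlet energy of $N$ in \cite[Section 17]{DLHMS}. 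Finally, \eqref{e:out-with-H} is obtained by rerunning the proof of \eqref{e:out} and using the a~priori bound $\bI(q,r)\le c_0^{-1}$, which allows replacing $\bD(q,r)$ by $c_0^{-1} r^{-1}\bH(q,r)$ in the higher-order remainder, turning $\bD^{1+\gamma_3}$ into $(r^{-1}\bH)^{1+\gamma_3}$ up to constants.

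The main obstacle, and the only point requiring genuine care rather than citation, is to verify that the error terms from the first variations can be bookkept so that the coefficient in front of $\bSigma(q,r)$ in \eqref{e:out} is precisely $\eps_3^2$ (rather than an absolute constant): this uses the assumption $\bmo\le\eps_2^2$ with $\eps_2$ chosen small depending on $\gamma_3$, together with the smallness of the second fundamental form of $\Sigma$ and of $\mathcal{M}$. Once this is in place, the passage from the ``high frequency'' regime of \eqref{e:out} to the ``low frequency'' regime \eqref{e:out-with-H} is a purely algebraic manipulation and needs no new geometric input.
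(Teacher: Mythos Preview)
Your proposal is correct and follows essentially the same route as the paper: both reduce to \cite[Proposition 26.4]{DLHMS} and \cite[Proposition 3.5]{DLS_Blowup}, using \eqref{e:fine_cm2} in place of \cite[Eq.~(25.5)]{DLHMS} to justify the first-variation estimates at every $q\in\Sing_f(T)\cap\bB_\eta$.

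Two small refinements are worth noting. First, for \eqref{e:Sigma1} the paper does not invoke $N(q)=Q\a{0}$ and a Poincar\'e inequality at a point; rather it uses the general bound $\bSigma(q,r)\le C_0 r^2\bD(q,r)+C_0 r\bH(q,r)$ from \cite[Lemma 3.6]{DLS_Blowup} and then absorbs the $\bH$-term via $\bI(q,r)\ge c_0$. Second, for \eqref{e:out-with-H} the paper observes that the argument in \cite[Section 4.3]{DLS_Blowup} actually yields, before any frequency hypothesis, the stronger raw estimate
\[
\left|\bD(q,r)-r^{-1}\bE(q,r)\right|\le C\bigl(\bD(q,r)+\bSigma(q,r)\bigr)^{1+\gamma_3}+C\eps_3^2\,\bSigma(q,r),
\]
and then applies $\bI(q,r)\le c_0^{-1}$ together with the same Poincar\'e bound to control both $\bD$ and $\bSigma$ by $r^{-1}\bH$. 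Your ``rerun and replace $\bD$ by $r^{-1}\bH$'' is the right idea, but one must first isolate this intermediate inequality (rather than \eqref{e:out} itself, which already used $\bI\ge c_0$).
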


\begin{proof} The inequality \eqref{e:H'bis} is stated in \cite[Proposition 26.4]{DLHMS} under the additional assumption that $q=0$ and that the radii $r$ satisfy $\bI (0,r) \geq 1$. However it can be easily seen that the proof given in \cite[Section 3.1]{DLS_Blowup} is valid for every $q\in \bB_1$ and every radius $r$: it is written for the case of $N$ being classical multivalued, but the case of $\mathscr{A}_Q$-valued $N$ is a routine modification. Next, the inequalities \eqref{e:out}, \eqref{e:in}, and \eqref{e:Sigma1} are claimed in \cite[Proposition 26.4]{DLHMS} under the assumption $q=0$ and the further restriction that $\bI (0,r) \geq 1$. The latter assumption is just needed to bound $\bH (q,r)$ with $r \bD (q,r)$ and the weaker version $\bI (q,r) \geq c_0$ will simply give $\bH (q,r) \leq c_0^{-1} r \bD (q,r)$, which in turn will just imply the desired inequalities with a constant $C$ which depends on $c_0$ as well. As for substituting $0$ with any $q\in \bB_\eta$, the proof of \eqref{e:Sigma1} is in fact valid for every $q$ (cf. \cite[Lemma 3.6]{DLS_Blowup}) while the arguments needed to show \eqref{e:out} and \eqref{e:in} given in \cite[Section 4]{DLS_Blowup} only use the condition \eqref{e:fine_cm2}, which is valid for $q\in {\rm Sing}_f (T)\cap\bB_\eta$. 

We finally come to \eqref{e:out-with-H}. Observe first that, by \cite[Lemma 3.6]{DLS_Blowup}, 
\begin{equation}\label{e:use-H}
\bSigma (q,r) \leq C_0 r^2 \bD(q,r) + C_0 r \bH (q,r) \leq C c_0^{-1} r\bH (q,r)\, ,
\end{equation}
for some geometric constant $C_0$, where in the last inequality we have used $\bI (q,r) \leq c_0^{-1}$ (again the proof in \cite{DLS_Blowup} is given for $\mathcal{A}_Q$-valued maps, but the $\mathscr{A}_Q$-valued case requires minor adjustments). Next, the arguments given in \cite[Section 4.3]{DLS_Blowup} imply in fact
\begin{equation}\label{e:more-general}
\left|\bD (q,r)  - r^{-1} \bE (q,r)\right| \leq C (\bD (q,r) + \bSigma (q,r))^{1+\gamma_3} + C \eps_3^2 \bSigma (q,r)\, ,
\end{equation}
from which \eqref{e:out-with-H} will follow using \eqref{e:use-H}.
As for the validity of \eqref{e:more-general}, we can just use the arguments in \cite[Section 4.3]{DLS_Blowup} after showing that, when dropping any control on $\bI (q,r)$ (but keeping the information \eqref{e:fine_cm2}), \cite[Eq. (4.9)]{DLS_Blowup} and \cite[Eq. (4.11)]{DLS_Blowup} can in fact be substituted with the more general 
\begin{align}
\sum_i \left(\inf_{\mathcal{B}^i} \mathbf{m}_0 \varphi_r\right) \ell_i^{m+2+\gamma_2/4} &\leq C_0 (\bD (q,r) + \bSigma (q,r))\\
\sup_i \mathbf{m}_0^t \left[\ell_i^t +\left(\inf_{\mathcal{B}^i} \varphi_r\right)^{t/2}\ell_i^{t/2}\right]
&\leq C(t) \bD(q,r)^{\gamma (t)}\, .
\end{align}
A simple inspection of the proof given in \cite[Lemma 4.5]{DLS_Blowup} shows that it indeed gives the latter two more general inequalities (cf. \cite[Eq. (4.12) and Eq. (4.13)]{DLS_Blowup}), while the assumption $\bI (q,r)\geq 1$ is only used at the end to bound the term $\bSigma (q,r)$ with $r^2 \bD (q,r)$. 
\end{proof}

\begin{lemma}\label{l:frequency well posedness}
There exist $0<\bar \eta<\eta(p,m)$, $\bar r > 0$, and $\bar C > 0$ such that $\mathbf{I}(q,r) \leq \bar C$ for every $q \in \mathbf{S}_f = \Sing_f(T) \cap \overline{\bB}_{\bar\eta}$ and for every $0 < r \leq \bar r$.
\end{lemma}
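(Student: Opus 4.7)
The plan is to combine a non-degeneracy statement for $\bH(q,r)$ with the variational inequalities of Proposition \ref{p:variation} and a continuity/compactness argument on $\mathbf{S}_f$.

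First I would establish that $\bH(q,r) > 0$ for every $q \in \Sing_f(T) \cap \overline{\bB}_\eta$ and every sufficiently small $r > 0$, so that $\bI(q,r)$ is at least well defined. The key input is that $q \in \Phi(\Gamma)$ by \eqref{e:fine_cm1}, so the center manifold passes through $q$ and the normal approximation satisfies $N(q) = Q\a{0}$. If $\bH(q, r_0)$ vanished for some $r_0 > 0$, then $N$ would equal $Q\a{0}$ on an annular portion of $\cM$ around $q$; by the coincidence property of $T$ and the graph of $N$ over $\cM$ on the set where $N$ is trivial (\cite[Theorem 17.20]{DLHMS}), $\spt(T)$ would agree with $\cM$ with multiplicity $Q$ on that annulus, and a unique-continuation-type argument for area minimizing currents would then force $T = Q\a{\cM}$ in a neighborhood of $q$, contradicting $q \in \Sing(T)$.

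Second I would derive a controlled log-derivative inequality for $\bI(q, \cdot)$. Fixing $c_0 = 1/(2\bar C)$ (with $\bar C$ to be chosen), on the regime where $\bI(q, r) \leq c_0^{-1}$, the estimates \eqref{e:H'bis}, \eqref{e:out-with-H}, \eqref{e:in} and \eqref{e:Sigma1}, combined with the Cauchy--Schwarz inequality $\bE(q,r)^2 \leq \bH(q,r)\,\bG(q,r)$ (immediate from Definition \ref{d:funzioni_ausiliarie}), yield by a computation parallel to \cite[Proposition 3.5]{DLS_Blowup} a differential inequality of the form
$$
\frac{d}{dr} \log \bI(q, r) \geq -C\, r^{\gamma - 1},
$$
for constants $C, \gamma > 0$ depending only on the parameters of the construction. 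Integrating gives
$$
\bI(q, r_1) \leq \exp\!\left(\tfrac{C}{\gamma}\, r_0^\gamma\right) \bI(q, r_0) \quad \text{whenever } 0 < r_1 \leq r_0 \leq \bar r,
$$
provided $\bI(q, \cdot) \leq c_0^{-1}$ throughout $[r_1, r_0]$.

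Third I would secure a uniform upper bound on $\bI(q, \bar r)$ for $q \in \mathbf{S}_f$. The numerator $\bar r\, \bD(q, \bar r)$ is controlled by the global Dirichlet energy bound on $N$ from \cite[Theorem 17.20]{DLHMS}, which is itself dominated by a multiple of $\bmo$. By Step 1, the continuous function $q \mapsto \bH(q, \bar r)$ is strictly positive on $\mathbf{S}_f$; since $\Sing(T)$ is relatively closed in $\spt^p(T) \setminus \spt^p(\partial T)$ and the density $\Theta_T$ is upper semicontinuous, $\Sing_f(T) \cap \overline{\bB}_{\bar\eta}$ is a compact subset of $\cM$, and thus $\bH(\cdot, \bar r)$ attains a positive minimum $\epsilon_0$ on $\mathbf{S}_f$. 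This gives $\bI(q, \bar r) \leq M_0 := C\bmo / \epsilon_0$ uniformly in $q \in \mathbf{S}_f$.

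Finally I would close by a continuity/bootstrap argument. Choosing $\bar C > \exp(C \bar r^\gamma / \gamma)\, M_0$ and shrinking $\bar\eta, \bar r$ if needed, suppose by contradiction that $\bI(q, r) > \bar C$ for some $q \in \mathbf{S}_f$ and $r \in (0, \bar r]$. Letting $r_* := \sup\{r \in (0, \bar r] : \bI(q, r) > \bar C\}$, Step 2 applied on the interval $[r_*, \bar r]$ (on which $\bI \leq \bar C \leq c_0^{-1}$) yields $\bI(q, r_*) \leq \exp(C\bar r^\gamma / \gamma)\, M_0 < \bar C$, contradicting $\bI(q, r_*) = \bar C$ by continuity. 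The main obstacle I foresee lies in Step 1: the non-degeneracy $\bH(q, r) > 0$ at a flat singular point requires promoting the local triviality of $N$ on an annulus to a full coincidence of $\spt(T)$ with $\cM$, and this relies on unique-continuation-type arguments that must be handled with care in the context of the center manifold construction.
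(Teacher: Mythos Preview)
Your overall strategy is close to the paper's, but the ordering creates an avoidable obstacle. The paper \emph{begins} with the coarse almost-monotonicity bound
\[
\bI(q,a) \leq C_0\bigl(1+\bI(q,b)\bigr) \qquad \text{whenever } \bH(q,\cdot)>0 \text{ on }[a,b],
\]
obtained exactly as in \cite[Theorem~3.2]{DLS_Blowup} from the variational identities, with no a priori restriction on the size of $\bI$. This single inequality then does both jobs at once. For non-degeneracy: if $\bH(q,s)>0$ but $\bH(q,r_*)=0$ at the largest $r_*<s$, taking $a\to r_*$ forces $r_*\bD(q,r_*)=0$, so $N\equiv Q\a{0}$ on the \emph{full} geodesic ball of radius $r_*$, and one concludes $T\res\bB_{r_*}(q)=Q\a{\cM}$ directly---no unique continuation through an annulus is needed. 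This dissolves precisely the obstacle you flag in Step~1. (Note also that your route from ``$N=Q\a{0}$ on an annulus'' to ``$\spt(T)=\cM$ on that annulus'' is not what \cite[Theorem~17.20]{DLHMS} provides: the graph of $N$ coincides with $T$ only over a good set $K$, not over $\{N=Q\a{0}\}$.) For the uniform upper bound, the paper simply uses continuity of $q\mapsto\bH(q,r_0)$ at the single point $q=0\in\Sing_f(T)$ to get $\bH(q,r_0)\geq\tfrac12\bH(0,r_0)>0$ on a small ball, and then applies the inequality above with $b=\bar r$; no compactness of $\bS_f$ is invoked.

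There is also a regime mismatch in your Step~2. The estimates \eqref{e:in} and \eqref{e:Sigma1} of Proposition~\ref{p:variation} are stated under $\bI(q,r)\geq c_0$, not $\bI(q,r)\leq c_0^{-1}$, so the log-derivative inequality you write actually requires the two-sided bound $c_0\leq\bI\leq c_0^{-1}$. In your bootstrap on $[r_*,\bar r]$ only the upper bound is secured; the lower bound is established only later (Lemma~\ref{l:remove_assumption}), which depends on the present lemma. The paper's additive form $\bI(a)\leq C_0(1+\bI(b))$ sidesteps this issue entirely.
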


\begin{proof}
\textit{Step I:} By the same proof as in \cite[Theorem 3.2]{DLS_Blowup}, we observe that there exists a geometric constant $C_0>0$ such that for every $q \in \Sing_f (T) \cap \bB_\eta \subset \mathcal M$ and for every $[a,b]\subset [0, 1]$ with $\bH|_{[a,b]}>0$ we have
\begin{equation}\label{e:freq_upper_bd}
    \bI(q,a)\leq C_0(1+ \bI(q,b))\,,
\end{equation}
so that in particular 
\begin{equation}\label{e:h_up_bd}
    a\,\bD(q,a)\leq C_0(1+ \bI(q,b))\,\bH(q,a)\,.
\end{equation}

\medskip

\textit{Step II:} We next claim that for every $q \in \Sing_f (T) \cap \bB_{\eta}$ there exists $r_q >0 $ such that $\bI(q,r)$ is well defined for every $0<r<r_q$. Indeed, first observe that there exists $0<r_q<1$ such that $\bH(q,r_q)>0$, otherwise $\bH(q,r)=0$ for every $r\in (0,1)$ and therefore $T\res \bB_1(q)=Q\a{\cM}$ so that $q$ would be a regular point, which is a contradiction. Analogously, we have that if $\bH(q,s)>0$ then
\begin{equation}\label{e:freq_well_def}
    \bH(q,r)>0\qquad \forall r\in (0, s]\,,
\end{equation}
since, if not, we can let $r_*$ be the largest radius smaller than $s$ such that $\bH(q,r_*)=0$, and then by \eqref{e:h_up_bd} we would have
\[
r_*\,\bD(q,r_*)\leq C_0(1+ \bI(q,s))\,\bH(q,r_*)=0
\]
so that once again $T\res \bB_{r_*}(q)=Q\a{\cM}$, a contradiction.

\textit{Step III:} From the continuity of $q \mapsto \bH(q,r_0)$ we deduce that there exists $\bar \eta>0$ such that $\bH(q,r_0)\geq \bH(0,r_0)/2>0$ for every $q\in \Sing_f(T) \cap \overline{\bB}_{\bar\eta}$ and therefore by Step II we have that for every $q\in\Sing_f(T) \cap \overline{\bB}_{\bar\eta}$  
\begin{equation}\label{e:freq_well_def_from_zero}
    \bH(q,r)>0\qquad \forall r\in (0, r_0]\,,
\end{equation}
Applying \eqref{e:freq_upper_bd} with $b=\bar r:=r_0$, we then have
\begin{equation}
    \bI(q,r)\leq C_0(1+ \bI(q,\bar r))= C_0\left(1+ \frac{\bar r \, \bD(q,\bar r)}{\bH(q,\bar r)}\right)\leq C_0\left(1+ \frac{2\bar r\,\bD(0,2(\bar r+\bar\eta))}{\bH(0,\bar r)}\right)
\end{equation}
for every $q\in\Sing_f(T) \cap \overline{\bB}_{\bar\eta}$ and for every $0<r\leq \bar r$.
\end{proof}

We next follow the computations in \cite{DLHMS} (cf. \cite{DLS_Blowup}) and make them slightly more precise to prove the following.

\begin{lemma}\label{l:almost_monot}
There is a constant $\gamma \in \left(0,1\right)$ with the following property. For every $c_0>0$ there is a positive $C$ depending on $c_0$ and on the constant $\bar C$ of Lemma \ref{l:frequency well posedness} such that the following holds. There exist functions $\Lambda=\Lambda(q,r)$ and $\Xi = \Xi(q,r)$ such that $0<\Lambda(q,r) \leq C\,r^\gamma$ and $0<\Xi(q,r) \leq C\, \bD^\gamma(q,r)$ on $\mathbf{S}_f \times \left(0,\bar r \right]$ and
\begin{equation} \label{almost monotonicity wlb}
\frac{d}{dr} \Big(\exp(\Lambda(q,r)) \bI (q,r) + \Xi(q,r)\Big) \geq 0   
\end{equation}
for every $q\in \mathbf{S}_f$ and for every $0 < r \leq \bar r$ such that $\bI (q,r) \geq c_0$.
\end{lemma}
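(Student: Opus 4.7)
The proof is a direct computation of $(\log \bI)'(q,r)$ using the first variation estimates of Proposition~\ref{p:variation}, combined with a Cauchy--Schwarz inequality to extract a nonnegative main term, and with the residual errors absorbed into $\Lambda$ and $\Xi$. Expanding $(\log \bI)' = 1/r + \bD'/\bD - \bH'/\bH$ and substituting \eqref{e:H'bis} and \eqref{e:in}, the singular contributions $1/r$, $(m-2)/r$, $-(m-1)/r$ cancel exactly, leaving
\[
(\log \bI)'(q,r) \;=\; \frac{2\bG}{r^{2}\bD} \;-\; \frac{2\bE}{r\bH} \;+\; \frac{R_{2}}{\bD} \;-\; \frac{R_{1}}{\bH},
\]
with $|R_{1}|/\bH \leq C$ and $|R_{2}|/\bD \leq C + C\bD^{\gamma_{3}-1}\bD' + Cr^{-1}\bD^{\gamma_{3}}$.

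\textbf{Main term via Cauchy--Schwarz and error estimates.} Applying the Cauchy--Schwarz inequality to the integrals in Definition~\ref{d:funzioni_ausiliarie} gives $\bE^{2} \leq \bG\bH$; using \eqref{e:out} we may further write $\bE = r\bD - E_{3}$ with $|E_{3}| \leq Cr\bD^{1+\gamma_{3}} + Cr\eps_{3}^{2}\bSigma$. Combining these,
\[
\frac{2\bG}{r^{2}\bD} \;-\; \frac{2\bE}{r\bH} \;\geq\; \frac{2\bE^{2}}{r^{2}\bD\bH} \;-\; \frac{2\bE}{r\bH} \;\geq\; -\,\frac{6|E_{3}|}{r\bH}.
\]
Exploiting Lemma~\ref{l:frequency well posedness} (so that $\bI(q,r) \leq \bar C$) together with the bounds $\bSigma \leq Cr^{2}\bD$ and $\bD(q,r) \leq Cr^{m}\eps_{3}^{2}$ from \eqref{e:Sigma1}, one deduces $|E_{3}|/(r\bH) \leq C\bD^{\gamma_{3}}/r + C\eps_{3}^{2} r$. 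Altogether, with $C = C(\bar C, c_{0})$,
\[
(\log \bI)'(q,r) \;\geq\; -\,\mathrm{err}_{1}(q,r) \;-\; \mathrm{err}_{2}(q,r),
\]
where $\mathrm{err}_{1} := C + Cr^{-1}\bD^{\gamma_{3}} + C\eps_{3}^{2} r$ and $\mathrm{err}_{2} := C\bD^{\gamma_{3}-1}\bD'$.

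\textbf{Definitions and verification.} Set
\[
\Lambda(q,r) \,:=\, \int_{0}^{r} \mathrm{err}_{1}(q,s)\,ds, \qquad \Xi(q,r) \,:=\, C_{1}\,\bD(q,r)^{\gamma_{3}},
\]
with $C_{1}$ chosen below. Thanks to $\bD \leq Cs^{m}\eps_{3}^{2}$, the integrand $s^{-1}\bD^{\gamma_{3}} \leq Cs^{m\gamma_{3}-1}$ is integrable at $0$, and direct integration yields $\Lambda(q,r) \leq Cr^{\gamma}$; using the boundedness of $\bD$ on $\bB_{\bar\eta} \times (0,\bar r]$, also $\Xi(q,r) \leq C\bD(q,r)^{\gamma}$, for any $\gamma \leq \min(1, m\gamma_{3}, \gamma_{3})$. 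Both $\Lambda$ and $\Xi$ are strictly positive. Since $\Lambda' = \mathrm{err}_{1}$ and $\Xi' = C_{1}\gamma_{3}\bD^{\gamma_{3}-1}\bD'$,
\[
\bigl(\exp(\Lambda)\bI + \Xi\bigr)'(q,r) \;=\; \exp(\Lambda)\,\bI\,\bigl(\Lambda' + (\log \bI)'\bigr) + \Xi' \;\geq\; -\exp(\Lambda)\,\bI\,\mathrm{err}_{2} + \Xi';
\]
choosing $C_{1}$ large enough (depending on $\bar C$ and on $\sup\exp(\Lambda) \leq \exp(C\bar r^{\gamma})$) ensures $\Xi' \geq \exp(\Lambda)\,\bI\,\mathrm{err}_{2}$, from which \eqref{almost monotonicity wlb} follows.

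\textbf{Main obstacle.} The delicate point is the integrability of $\mathrm{err}_{1}$ near $r = 0$: the formally singular term $r^{-1}\bD^{\gamma_{3}}$ is under control only thanks to the a priori bound $\bD(q,r) \leq Cr^{m}\eps_{3}^{2}$, itself a consequence of \eqref{e:Sigma1}. The validity of this bound, and more generally of all the estimates of Proposition~\ref{p:variation}, on the full range $r \in (0,\bar r]$ ultimately relies on $q \in \Sing_{f}(T) \cap \bB_{\bar\eta}$, which via \eqref{e:fine_cm2} supplies the radius condition required in the arguments adapted from \cite{DLS_Blowup}.
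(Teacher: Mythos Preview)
Your proof is correct and follows essentially the same approach as the paper's: both compute the derivative of the frequency, use the first variation estimates of Proposition~\ref{p:variation} together with the Cauchy--Schwarz inequality $\bE^2 \leq \bG\bH$ to extract a nonnegative main term, and absorb the residual errors into an integrating factor $\Lambda$ and an additive correction $\Xi$. The only cosmetic differences are that the paper works with $\bI'$ directly rather than $(\log\bI)'$, invokes \eqref{e:out-with-H} in place of \eqref{e:out} when comparing $r\bD$ with $\bE$, and defines $\Xi$ as the integral $\int_0^r \xi(\tau)\exp(\Lambda(\tau))\,d\tau$ with $\xi = C\bD^{\gamma_3-1}\bD'$, which after integration yields the same bound $\Xi \leq C\bD^{\gamma_3}$ that you obtain by setting $\Xi = C_1\bD^{\gamma_3}$ from the outset.
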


\begin{proof}
In order to ease the notation, we drop the dependence on $q \in \mathbf{S}_f$ from all quantities. We compute, for a.e. $r \in \left( 0, \bar r \right]$ such that $\bI(r) \geq c_0$: 
\begin{align*}
    \bI'(r)
        &=\frac{r\,\bD'(r)}{\bH(r)}+\frac{\bD(r)}{\bH(r)}-\frac{r\,\bD(r)\,\bH'(r)}{\bH^2(r)}\\
        &\stackrel{\eqref{e:H'bis} \& \eqref{e:in}}{\geq}\frac{r}{\bH(r)}\left(\frac{m-2}{r}\bD(r)+\frac2{r^2}\bG(r)-C\,\bD(r)-C\,\bD^{\gamma_3}(r)\,\bD'(r)-\frac{C}r \bD^{1+\gamma_3}(r) \right)\\
        &\qquad\qquad+\frac{\bD(r)}{\bH(r)}-\frac{r\,\bD(r)}{\bH^2(r)}\,\left(\frac{m-1}{r}\,\bH(r)+\frac2{r}\bE(r)+C\,\bH(r)\right)\\
        &= \frac{2}{r}\left(\frac{\bG(r)}{\bH(r)}-\frac{r\,\bE(r)\,\bD(r)}{\bH^2(r)} \right) -C\,\bI(r)-C\,\frac{\bD^{\gamma_3}(r)}{r}\,\bI(r)-C\, \frac{r\,\bD^{\gamma_3}(r)\,\bD'(r)}{\bH(r)}\\
        & \stackrel{\eqref{e:out-with-H}}{\geq} \frac{2}{r}\left(\frac{\bG(r)}{\bH(r)}-\frac{\bE^2(r)}{\bH^2(r)} \right) -C \frac{\bE(r)\,\bD^{\gamma_3}(r)}{r\,\bH(r)}-C\,\frac{\bE(r)}{\bH(r)} \\
       & \qquad\qquad-C\,\bI(r)-C\,\frac{\bD^{\gamma_3}(r)}{r}\,\bI(r)-C\, \frac{r\,\bD^{\gamma_3}(r)\,\bD'(r)}{\bH(r)}\\
        &\geq -C \frac{\bE(r)\,\bD^{\gamma_3}(r)}{r\,\bH(r)}-C\,\frac{\bE(r)}{\bH(r)} -C\,\bI(r)-C\,\frac{\bD^{\gamma_3}(r)}{r}\,\bI(r)-C\, \frac{r\,\bD^{\gamma_3}(r)\,\bD'(r)}{\bH(r)}\\
        &\geq -C\,\left(1+\frac{\bD^{\gamma_3}(r)}{r}\right)\,\bI(r)-C\, \bD^{\gamma_3-1}(r)\,\bD'(r)\,,
\end{align*}
where in the second inequality we used \eqref{e:out-with-H} and $\bI(r) \geq c_0$ to estimate $r^{-1}\bH(r) \leq c_0^{-1}\,\bD(r)$, in the third inequality we used the Cauchy-Schwarz inequality, and in the fourth inequality we used \eqref{e:out} and \eqref{e:Sigma1} to estimate
$$
\frac12 \,\bD(r)\leq \frac{\bE(r)}{r}\leq 2\,\bD(r)\,.
$$

Setting
\[
\lambda(r) := C\, \left( 1 + \frac{\bD^{\gamma_3}(r)}{r} \right)\,, \qquad \xi(r) := C\,\bD^{\gamma_3-1}(r) \,\bD'(r)\,,
\]
we have then showed that $\bI(r)$ satisfies, at almost all radii $r$ where $\bI(r) \geq c_0$, the differential inequality
\[
\bI' \geq -\lambda\,\bI - \xi\,,
\]
so that we immediately conclude that, at such values of $r$:
\[
\frac{d}{dr} \left( \exp\left( \int_0^r \lambda(s)\,ds \right) \, \bI(r) + \int_0^r \xi(\tau) \,\exp\left( \int_0^\tau \lambda(s)\,ds \right)  \,d\tau  \right) \geq 0\,.
\]

The conclusion follows setting
\[
\Lambda(r) := \int_0^r \lambda(s)\,ds\,, \qquad \qquad \Xi(r) := \int_0^r \xi(\tau) \,\exp(\Lambda(\tau))  \,d\tau\,,
\]
and observing that, since $\bD(s) \leq \varepsilon_3^2\,s^m \leq s$ (see e.g. \eqref{e:Sigma1}) we have
\[
\Lambda(r) = C\, \int_0^r \left(1+\frac{\bD^{\gamma_3}(s)}{s}\right)\,ds\leq C\,r^{\gamma_3}\,,
\]
and
\[
\Xi (r) \leq C\,\int_0^r \bD^{\gamma_3-1}(s)\,\bD'(s)\, ds = C\, \bD^{\gamma_3}(r)\,.\qedhere
\]
\end{proof}

In order to complete the proof of Proposition \ref{p:almost-monot}, it suffices to show that $\bI (q,r) \geq c_0$ for some positive constant $c_0$. This will be accomplished in the following lemma. 

\begin{lemma}\label{l:remove_assumption}
There is $r_0 >0$ with the property that $\bI (q, r)\geq \frac{5}{4}$ for every $q \in \mathbf{S}_f$ and every $0<r\leq r_0$.
\end{lemma}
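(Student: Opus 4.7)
The plan is to argue by contradiction. Suppose the conclusion fails: then, following an argument analogous to Step III of Lemma \ref{l:frequency well posedness}, there exist $q \in \mathbf{S}_f$ and a sequence $r_k \downarrow 0$ with $\mathbf{I}(q, r_k) < 5/4$.

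First, I would carry out the blow-up for the normal approximation $N$ around $q$, defining
\[
N_{q, r_k}(x) := \frac{N(\mathbf{e}(q, r_k x))}{(r_k^{2-m}\,\mathbf{D}(q, r_k))^{1/2}}\,, \qquad x \in B_1 \subset T_q\mathcal{M}\,.
\]
Following the blow-up procedure of \cite[Section 28]{DLHMS}---whose compactness step does not rely on the almost monotonicity we are trying to establish here, but only on the decay estimates of Proposition \ref{p:decay-improved} and on the $W^{1,2}$-compactness of $\Dir$-minimizing special $Q$-valued functions from \cite{DLHMS_linear}---one extracts a subsequence along which $N_{q,r_k}$ converges strongly in $W^{1,2}_{\mathrm{loc}}(B_1)$ to a $\Dir$-minimizing map $\bar N : B_1 \to \mathscr{A}_Q(\R)$ with $\bar N(0)=Q\a{0}$, $\bfeta\circ \bar N \equiv 0$, and positive Dirichlet energy.

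Next, the strong convergence and scale invariance of the frequency yield $I_{\bar N, 0}(1) = \lim_k \mathbf{I}(q, r_k) \leq 5/4$, while the monotonicity of $r \mapsto I_{\bar N,0}(r)$---a standard property for $\Dir$-minimizers from \cite{DLHMS_linear} that does not rely on the circular center-manifold monotonicity---gives $I_{\bar N, 0}(0) \leq I_{\bar N, 0}(1) \leq 5/4$. Since $\bar N$ is nontrivial with $\bar N(0)=Q\a{0}$, one has $0 \in \sing(\bar N)$, so by Theorem \ref{thm:linear-what-is-needed}(ii) the value $I_{\bar N,0}(0)$ is a positive integer, whence $I_{\bar N, 0}(0)=1$.

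The final step would be to derive a contradiction from the fact that $q \in \Sing_f(T)$ admits a blow-up $\bar N$ with $I_{\bar N, 0}(0)=1$. By Theorem \ref{thm:linear-what-is-needed} and Lemma \ref{l:homogeneous}, any tangent function $\tilde N$ of $\bar N$ at $0$ is a nontrivial $1$-homogeneous $\Dir$-minimizer of the form
\[
\tilde N(x)=\begin{cases}\bigl(\textstyle\sum_i\a{a_i x_1},+1\bigr) & x_1 \geq 0,\\ \bigl(\textstyle\sum_i\a{b_i x_1},-1\bigr) & x_1 \leq 0,\end{cases}
\]
with $|a|=|b|>0$ and invariant under translations along the $(m-1)$-dimensional subspace $\{x_1 = 0\}$. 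Via a diagonal argument, $\tilde N$ is itself a blow-up limit of $N$ at $q$ along a sequence $\rho_j := s_j r_{k_j} \downarrow 0$, whose non-triviality, unwound through the refined center manifold construction, forces a tangent object to $T$ at $q$ with $(m-1)$-dimensional (rather than $m$-dimensional) translational symmetry. This contradicts the uniqueness of the flat tangent cone $Q\a{\pi(q)}$ to $T$ at $q$ provided by Theorem \ref{t:uniqueness-tangent-plane}.

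I expect the main obstacle to be the last step: bridging the linear information carried by $\tilde N$ (or equivalently by the $(m-1)$-dimensional stratum $\sing_{m-1}(\bar N)$ through $0$, whose existence at the blow-up level is guaranteed by Theorem \ref{thm:linear-what-is-needed}) to the level of the current $T$, so as to produce a non-flat tangent cone at $q$. The enhanced center manifold provided by Proposition \ref{p:contact}---which places the contact set densely around any flat singularity and restricts the size of stopping cubes---together with the almost-quadratic height and excess decays from Proposition \ref{p:decay-improved}, should be exactly what is needed to make this correspondence rigorous. Once the pointwise conclusion $\mathbf{I}(q,r) \geq 5/4$ is reached for each $q \in \mathbf{S}_f$ at a scale $r_0(q) > 0$, the existence of a uniform $r_0$ is obtained by a standard covering and continuity argument exploiting $\bH(q, \bar r) \geq \bH(0,\bar r)/2>0$ as in Lemma \ref{l:frequency well posedness}.
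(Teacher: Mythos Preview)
Your final step has a genuine gap: a nontrivial $1$-homogeneous blow-up $\tilde N$ of the normal approximation does \emph{not} yield a non-flat tangent cone to $T$ at $q$. The rescaling defining $N_{q,r}$ divides by $(r^{2-m}\mathbf{D}(q,r))^{1/2}$, and by Proposition~\ref{p:decay-improved} one has $\mathbf{D}(q,r)\le C r^{m+2-2\delta}$, so this normalizing factor is $O(r^{2-\delta})=o(r)$. A nontrivial limit $\tilde N$ therefore corresponds to a deviation of $\spt(T)$ from the center manifold that is $o(r)$ at scale $r$, which is precisely the situation at \emph{every} point of $\mathbf{S}_f$: the tangent cone to $T$ is flat while the blow-up of $N$ is nontrivial. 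No contradiction with Theorem~\ref{t:uniqueness-tangent-plane} can be drawn from the $(m-1)$-dimensional spine of $\tilde N$; the ``bridge'' you hope to build simply does not exist.

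The paper's argument is quite different and avoids blow-up altogether. One considers $\mathbf{\Omega}(r):=\max\{\mathbf{I}(q,r),\tfrac12\}$ and observes that Lemma~\ref{l:almost_monot} (with $c_0=\tfrac12$) gives almost-monotonicity for $\mathbf{\Omega}$; hence if $\mathbf{I}(q,s)\le \tfrac54$ at some small $s$, then $\mathbf{I}(q,\sigma)\le \tfrac{46}{32}$ for all $0<\sigma\le s$. Plugging this bound into the identity
\[
\frac{d}{dr}\ln\frac{\mathbf{H}(q,r)}{r^{m-1}}=\frac{2\,\mathbf{I}(q,r)}{r}+O(r^{\gamma-1})
\]
(obtained from \eqref{e:H'bis} and \eqref{e:out-with-H}) and integrating produces the \emph{lower} bound $\mathbf{H}(q,r)\ge c\,r^{m+3-1/8}$. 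On the other hand, the $L^\infty$ decay $\|N\|_{L^\infty(\mathcal{B}_r(q))}\le Cr^{2-\delta}$ coming from Proposition~\ref{p:decay-improved} forces the \emph{upper} bound $\mathbf{H}(q,r)\le C\,r^{m+3-2\delta}$. For $\delta$ small these are incompatible. The contradiction is thus a direct clash of decay rates for $\mathbf{H}$, with no compactness step and no passage to tangent cones of $T$.
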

\begin{proof} In order to simplify our notation we drop the dependence on $q \in \mathbf{S}_f$. Consider the function $\bOmega (r):= \max \left\{\bI (r), \frac{1}{2}\right\}$. We notice first that, by standard measure theory, $\bOmega$ has derivative $\bOmega'(r)=0$ for a.e. $r$ such that $\bI(r) = \frac12$. By Lemma \ref{l:almost_monot} we have
\[
\frac{d}{dr} \Big(\exp(\Lambda(r))\bOmega (r) + \Xi(r)\Big)\geq 0 \qquad \mbox{on $\left\{\bI > \frac{1}{2}\right\}$}\,,
\]
whereas a.e. on $\left\{\bI \leq \frac12\right\}$ it holds
\[
\frac{d}{dr} \Big(\exp(\Lambda(r))\bOmega (r) + \Xi(r)\Big) = \Lambda'(r) \exp(\Lambda(r)) \bOmega(r) + \Xi'(r) = \exp(\Lambda(r)) \, \left( \lambda(r) \bOmega(r) + \xi(r) \right) \geq 0\,.
\]
In particular we easily conclude from the properties of $\Lambda$ and $\Xi$ that, if $r_0$ is chosen sufficiently small, then 
\[
\bOmega (\sigma) \leq \frac{9}{8} \bOmega (s) + \frac{1}{32} \qquad \forall \sigma \leq s \leq r_0\, .
\]
Thus, if for some $s\leq r_0$ we have $\bI (s)\leq \frac{5}{4}$, then $\bI (\sigma)\leq \bOmega (\sigma) \leq \frac{46}{32}$ for every $0<\sigma \leq s$.

We now compute 
\[
\frac{d}{dr} \left( \ln \frac{\bH (r)}{r^{m-1}} \right) = \frac{\bH' (r)}{\bH (r)} - \frac{m-1}{r} = \frac{2}{r} \frac{\bE (r)}{\bH (r)} + {\rm O}(1) = \frac{2\bI(r)}{r} + {\rm O}(r^{\gamma-1})
\]
where in the second identity we have used \eqref{e:H'bis}, and in the third identity we have used \eqref{e:out-with-H} together with the estimate $\bH(r) \leq C\, r^{m+1}$. For $r\in ]0,s]$, using $\bI (r) \leq \frac{46}{32}$ we then conclude $\frac{d}{dr} \left(\ln \frac{\bH (r)}{r^{m-1}}\right) \leq \frac{46}{16r} + C r^{\gamma-1}$ and hence, choosing $s_1\leq s$ sufficiently small we can estimate
\[
\frac{d}{dr} \left( \ln \frac{\bH (r)}{r^{m-1}}\right) \leq \frac{31}{8r} \qquad \forall 0<r\leq s_1\,.
\]
Integrating the latter inequality between $r$ and $s_1$ we conclude 
\[
\frac{\bH (s_1)}{s^{m+3-1/8}_1} \leq \frac{\bH (r)}{r^{m+3-1/8}}\, .
\]
In particular we infer 
\begin{equation}\label{e:from-above}
\bH (r) \geq c\, r^{m+3-1/8}
\end{equation} 
for some positive constant $c$ and every sufficiently small $r$. Note however that, by the decay in Proposition \ref{p:decay-improved}, it is easy to see that for every positive $\delta>0$ and every $q\in \mathbf{S}_f$ there is a constant $C(\delta)$ such that 
\[
\|N\|_{L^\infty (\mathcal{B}_r (q))} \leq C r^{2-\delta}\, .
\]
In particular we conclude 
\begin{equation}\label{e:from-below}
\bH (r) \leq C r^{m+3-\delta}\, .
\end{equation}
Since \eqref{e:from-above} and \eqref{e:from-below} are not compatible, we conclude that the premise, i.e. the existence of an $s\leq r_0$ at which $\bI (s)\leq \frac{5}{4}$, is incorrect. 
\end{proof}

\subsection{Proof of Proposition \ref{p:blow-up}} We are now ready to prove Proposition \ref{p:blow-up}. First of all we remark that the conclusions (i) and (ii) follow immediately from \cite[Theorem 28.2]{DLHMS}. The only differences that must be taken into account are the following:
\begin{itemize}
    \item The maps $N^b_k$ examined in \cite[Theorem 28.2]{DLHMS} are defined over possibly different tangent spaces $T_{q_k} \mathcal{M}_k$ due to the fact that the center manifolds might actually change in that case. However the situation that all points $q_k$ coincide with a single point $q$ and all center manifolds $\mathcal{M}_k$ coincide with a single center manifold $\mathcal{M}$ is included in the statement of \cite[Theorem 28.2]{DLHMS}, so that our situation is just a particular case. \item The normalization of the maps $N^b_k$ is different and it is given in \cite[Theorem 28.2]{DLHMS} by
\[
N^b_k (x) := \frac{N (\mathbf{e} (q, r_k x))}{\left( r_k^{1-m} \mathbf{H} (q,r)\right)^{\sfrac{1}{2}}}\, . 
\]
Since however Proposition \ref{p:almost-monot} implies that the limit
\[
\lim_{k\to\infty} \frac{r_k^{2-m} \mathbf{D} (q,r)}{r_k^{1-m} \bH (q,r)}
\]
exists and it is finite and positive, the limit $N^b_\infty$ of the $N^b_k$ and the limit $\bar N$ of the $N_{q,r_k}$ differ only by a positive scaling factor. 
\item The strong convergence in $W^{1,2}$ is not stated in \cite[Theorem, 28.2]{DLHMS}, but however it is a direct consequence of the argument given for its $\Dir$-minimality, which shows that 
\[
\Dir(f,B_{3/2}) \geq \limsup_{k \to \infty} \Dir(N^b_k,B_{3/2})
\]
for every competitor $f\in W^{1,2} (B_{3/2})$ with $f|_{\partial B_{3/2}}= N^b_\infty|_{\partial B_{3/2}}$. Since we can use directly $f= \bar N$ in the latter comparison, we conclude that the Dirichlet energies of $N_{q,r_k}$ converge to the Dirichlet energy of $\bar N$, which in turn implies the strong $W^{1,2}$ convergence by standard arguments. 
\end{itemize}
Observe next that the strong $W^{1,2}$-convergence implies as well that 
\[
\frac{\rho \int |D\bar N|^2 \phi (\rho^{-1} |x|)}{- \int |x|^{-1} \phi' (\rho^{-1} |x|) |\bar N|^2} = \bI (q,0) \qquad \mbox{for all $\rho\in ]0,1[$}\,.
\]
In particular by \cite[Theorem 9.2]{DLHMS_linear} we infer (iii). Finally (iv) is a consequence of Theorem \ref{thm:linear-what-is-needed} and the lower bound $\bI (q,0)\geq \frac{5}{4}$ given by Lemma \ref{l:remove_assumption}. \qed

\section{Conclusion} \label{s:conclusion}

In this section we prove Lemma \ref{l:closed} and Proposition \ref{p:reduction_argument}, thereby concluding the proofs of Theorem \ref{t:even} and Theorem \ref{t:even-structure}. 

\subsection{Proof of Lemma \ref{l:closed}} Observe that, by Theorem \ref{t:uniqueness-tangent-plane}, it follows immediately that there are positive geometric constants $\varepsilon$ and $C$ such that, if $\bE^{no} (T, \bB_\rho)+\bA^2< \varepsilon$, $|q|\leq \varepsilon \rho$ and $q\in \Sing_f(T)$, then 
\begin{equation}\label{e:yet-decay}
\bE^{no} (T, \bB_r (q)) \leq C \frac{r^{\alpha}}{\rho^\alpha} \qquad \forall r \leq \frac{\rho}{2}\, .
\end{equation}
Assume now that $\bar q\in \overline{\Sing_f(T)}\cap \overline{\bB}_{\varepsilon \rho}$ and let $q_i\in \Sing_f(T)\cap \overline{\bB}_{\varepsilon \rho}$ be a sequence converging to it. Given that the constant $C$ is independent of $i$ we can pass in the limit in $i$ for the corresponding estimates of \eqref{e:yet-decay} and infer
\[
\bE^{no} (T, \bB_r (\bar q)) \leq C \frac{r^{\alpha}}{\rho^\alpha} \qquad \forall r < \frac{\rho}{2}\, .
\]
In particular $T$ has a unique flat tangent cone at $\bar q$. However, $\bar q$ cannot be a regular point, and thus, in particular, $\bar q\in \Sing_f(T)$. This shows that, upon choosing $\eta$ appropriately, we can assume that $\bS_f:= \overline{\bB}_\eta \cap {\rm Sing}_f (T)$ is a closed set. Fix now $k\in \mathbb N\setminus \{0,1\}$ and a point $q\in \bS_f (k)$. Since by Proposition \ref{p:almost-monot} the map
\[
\bS_f \ni q \mapsto \bI (q,0)
\]
is upper semicontinuous and it takes integer values, there is a closed ball $\overline{\bB}_r (q)$ with the property that $\bI (\cdot, 0) \leq k$ on $\overline{\bB}_r (q)$. Consider now a sequence $\{q_i\}\subset \overline{\bB}_r (q)\cap \bS_f (k)$ converging to some $\bar q$. From our considerations we know that $\bar q\in \bS_f$ and that $\bI (\bar q, 0)\leq k$. On the other hand, again the upper semicontinuity of the frequency implies $\bI (\bar q, 0) \geq k$. This shows that $\bar q \in \bS_f (k)\cap \overline{\bB}_r (q)$ and concludes the proof of Lemma \ref{l:closed}. \qed

\subsection{Proof of Proposition \ref{p:reduction_argument}} We examine the alternative (a) as it will become obvious that the same argument applies with the alternative (b). 

Consider the sequence $N_{\bar q,r_j}$ given by Proposition \ref{p:blow-up} and its limit $\bar N$. Let $\bS_j \subset T_{\bar q}\mathcal M$ denote the rescaled sets $r_j^{-1} \be (\bar q, \cdot)^{-1} (\bS_f (k) \cap \overline{\bB}_{r_j}(\bar q))$. Upon extraction of a (not relabeled) subsequence, we infer from Lemma \ref{l:closed} the existence of a compact set $\bar \bS$ which is the Hausdorff limit of $\bS_j$ and which, thanks to \eqref{e:lower-density} and the upper semicontinuity of the Hausdorff pre-measures $\mathcal{H}^s_\infty$ with respect to Hausdorff convergence of compacts sets, has positive $\mathcal{H}^{m-2+\delta}$ measure. Fix any point $q\in \bar \bS$, let $q_j \in \bS_j$ be a sequence converging to it and consider likewise the points $\bar q_j := \be (\bar q, r_j q_j)$. By Proposition \ref{p:almost-monot} and the assumption $\bI (\bar q_j, 0) = k$, we easily infer that, for any fixed radius $\rho$, 
\[
\liminf_{j\to\infty} \bI (\bar q_j, r_j \rho) \geq k\, .
\]
On the other hand the strong $W^{1,2}$ convergence of $N_{\bar q, r_j}$ to $\bar N$ immediately implies
\[
\frac{\rho \int |D\bar N|^2 \phi (\rho^{-1} |x-q|)}{- \int |x-q|^{-1} \phi' (\rho^{-1} |x-q|) |\bar N|^2} = \lim_{j\to\infty} \bI (\bar q_j, r_j \rho) \geq k
\]
Letting $\rho\downarrow 0$ we then conclude that $\bar N (q) = Q\a{0}$ and the frequency $I_{\bar N}(q,0)$ is at least $k$. However, by the $k$-homogeneity of $\bar N$, we necessarily have that the frequency does not exceed $k$. It thus turns out that the frequency of $\bar N$ at $q$ is $k$. By \cite[Section 10]{DLHMS_linear} (cf. \cite[Section 3.5]{DLS_Qvfr}), $\bar N (x+ \lambda q) = \bar N (x)$ holds for every $x\in \mathbb R^m$, every $\lambda \in \mathbb R$ and every $q\in \bar \bS$. Since the Hausdorff $(m-2+\delta)$-dimensional measure of $\bar \bS$ is positive, clearly $\bar \bS$ spans at least an $(m-1)$-dimensional vector space. If it were to span the whole $\mathbb R^m$, then $\bar N$ would be identically $Q\a{0}$, but we know from Proposition \ref{p:blow-up} that the latter is not possible. \qed

\subsection{Proof of Theorem \ref{t:even}} A function $\bar N$ as in Proposition \ref{p:reduction_argument} cannot exist. In fact, since the latter is nontrivial by Proposition \ref{p:blow-up}, the set of points where it takes the value $Q\a{0}$ coincides with the singular set of $\bar N$ and must necessarily be the hyperplane $V$, by \cite[Theorem 10.2]{DLHMS_linear}. But by Theorem \ref{thm:linear-what-is-needed} the frequency of $\bar N$ at $\mathcal{H}^{m-1}$-a.e. point $q\in V$ would have to be $1$, while we know from Proposition \ref{p:reduction_argument} that it must be an integer $k\geq 2$. 

This shows that:
\begin{itemize}
    \item When $m\geq 3$, $\mathbf{S}_f (k)$ has Hausdorff dimension at most $m-2$, which in turn implies that $\mathbf{S}_f$ has as well Hausdorff dimension at most $m-2$ and hence completes the proof of Theorem \ref{t:even} for the case $m\geq 3$;
    \item When $m=2$, $\mathbf{S}_f (k)$ is discrete, which in turn implies that $\mathbf{S}_f$ is countable and hence completes the proof of Theorem \ref{t:even} for the case $m=2$.
\end{itemize}
\qed

\subsection{Proof of Theorem \ref{t:even-structure}}
For $i=1,2,\dots$, let $\Lambda_i$ be the connected components of $(\Omega \cap \spt^p(T))\setminus \Sing(T)$. We first claim that if $\Lambda_i$ is nonorientable then $T\res \Lambda_i=Q\llbracket \Lambda_i\rrbracket\; \modp$. Indeed, the constancy lemma $\modp$ (see \cite[Theorem 7.6]{DPH_JMAA}) implies that there exists $\theta \in \left(-Q, Q\right] \cap \mathbb{Z}$ such that $T \res \Lambda_i = \theta\,\a{\Lambda_i}\;\modp$. Assume by contradiction that $\theta\neq Q$, and exploit the fact that $\Lambda_i$ is nonorientable to find a loop $\gamma$ on $\Lambda_i$ and a number $\delta >0$ with the following property: the $\delta$-tubular neighborhood 
\[
B_\delta(\gamma):=\{q\in \Lambda_i\,\colon\,{\rm{dist}}(q,\gamma)<\delta\}
\]
is nonorientable, and there exists an $(m-1)$-dimensional orientable surface $B\subset B_\delta(\gamma)$ such that $B_\delta(\gamma)\setminus B$ is orientable. Then, for a suitable choice of orientation on $B$, one has that $(\partial (T\res B_\delta(\gamma)))\res B_\delta(\gamma)= 2\theta\llbracket B\rrbracket\neq 0\; \modp$, which is a contradiction. 

Next, we set
\begin{eqnarray*}
T_o := & T \res \bigcup \left\lbrace \Lambda_i \, \colon \, \mbox{$\Lambda_i$ is orientable} \right\rbrace\,, \\
T_n := &T \res \bigcup \left\lbrace \Lambda_i \, \colon \, \mbox{$\Lambda_i$ is nonorientable} \right\rbrace\,, 
\end{eqnarray*}
and we proceed with the proof of the conclusions of Theorem \ref{t:even-structure}. 

We first claim that $\spt^p(T_o) \cap \spt^p (T_n) \subset \mathcal S$: by the end of the proof we will then upgrade this conclusion to the stronger $\spt^p(T_o) \cap \spt^p (T_n) = \emptyset$, which is the only thing to check for (i). The fact that $\spt^p(T_o) \cap \spt^p (T_n) \subset \mathcal S$ follows from Theorem \ref{t:even} and the claim above on the multiplicity of $T$ on the nonorientable components, after recalling from Definition \ref{def:free-boundary} that the multiplicities of sheets concurring at points of $\sing(T) \setminus \mathcal S$ are integers $k < Q$.

Next, we prove (iii). We set $T_2:= Q^{-1}\,T_n$, and we notice that $T_2$ is a representiative $\moddue$ as a consequence of the above claim on the multiplicities of $T_n$. The boundary $\moddue$ $\partial^2[T_2]$ is then a flat $(m-1)$-chain $\moddue$ with $\spt^2(\partial^2[T_2])\subset \mathcal S$. Since $\Ha^{m-1}(\mathcal S)=0$, \cite{White_def} implies that $\partial^2[T_2]=0$.

We proceed now with the proof of (iv). The second sentence is a consequence of the first sentence and of \cite{NV}. The first sentence is a well-kown fact. A quick proof can be achieved as follows. Having fixed $q\in \Sigma \cap \Omega$, choose $\rho$ sufficiently small so that $\bB_\rho (q) \cap \Sigma$ is diffeomorphic to the $m+1$-dimensional ball. Since the $\mathbb Z_2$-homology of the latter is trivial and $\partial T_2 \res \bB_\rho (q) = 0\, \moddue$, we conclude that $T_2$ is a boundary $\moddue$, i.e. there is a flat chain $S$ $\moddue$ of dimension $m+1$ supported in $\Sigma$ such that $(T_2 - \partial S ) \res \bB_\rho (q) = 0\, \moddue$. Since the dimension of $S$ equals the dimension of $\Sigma$, the latter has a representative which is a Caccioppoli set of $\Sigma \cap \bB_\rho (q)$. Its boundary is the desired integral current representing $T_2 \res \bB_\rho (q)$ and it follows that it has to be area minimizing. 

Next we conclude the proof of (i), showing that in fact $\spt^p(T_o) \cap \spt^p (T_n) = \emptyset$. Fix a point $q\in \spt^p (T_n) = \spt^2 (T_2)$ and assume by contradiction that it belongs to $\spt^p (T_o)$ as well. Consider a ball $\bB_\rho (q)\subset \subset \Omega$ and let $\Gamma := (\bB_\rho (q) \cap \Sigma)\setminus \spt^2 (T_2)$: the latter is an open subset of $\Sigma \cap \bB_\rho (q)$. Enumerate its connected components by $\{\Gamma_i\}$ and recall that they might be, in principle, infinitely many. Consider now the current $T_o^i := T_o \res \Gamma_i$. Since we know that $\spt^p(T_o) \cap \spt^2(T_2) \subset \mathcal S$, the boundary $\modp$ of $T_o^i$ is supported in $\partial \bB_\rho (q)$. 
Some of these currents must be nonzero, because the regular part of $T_o$ does not intersect the support of $T_2$: in fact, since $q\in \spt^p (T_o)$ there must be a sequence of points $\{q_j\}\subset \Gamma \cap \spt^p (T_o)$ such that $q_j \to q$. Each $q_j$ will belong to the support of some $T_o^{i(j)}$. 

We first claim that $\{i(j)\}$ is bounded. Note that each such $T_o^i$ has empty boundary $\modp$ and it is area minimizing $\modp$ in $\bB_\rho (q) \cap \Sigma$. In particular, the presence of a point in $\spt^p (T_o^i)\cap \bB_{\rho/2} (q)\neq \emptyset$ would imply, by the monotonicity formula, that 
\[
\|T_o^i\| (\bB_\rho (q)) \geq c_0 >0
\]
for some positive constant $c_0$. Since 
\[
\|T_o\| (\bB_\rho (q)) = \sum_i \|T_o^i\| (\bB_\rho (q))\, ,
\]
the number of distinct $T_o^i$ whose supports intersect $\bB_{\rho/2} (q)$ is finite.

We thus conclude that there is one $T_o^i$ which is nontrivial and with the property that $q$ belongs to its support. This puts us in the position of applying the strong maximum principle of \cite{Wic2} to the varifolds induced by $T_2$ and $T_o^i$. But then we would conclude that the support of $T_o^i$ is contained in the support of $T_2$, which is a contradiction, because it would imply that the regular parts of $T_n$ and $T_o$ have nonempty intersection.

Finally, the proof of (ii) is analogous to that of \cite[Corollary 1.10]{DLHMS}, and thus we omit it.
\qed



\bibliographystyle{plain}
\bibliography{Biblio}

\end{document}